\documentclass[12pt]{amsart}
\usepackage{amssymb}
\usepackage{amsmath,amscd}
\usepackage{amsfonts,latexsym,verbatim,amscd,mathrsfs,color,array}
\usepackage[all,cmtip]{xy}
\usepackage{mathrsfs}
\usepackage{multirow}
\usepackage{graphicx}
\usepackage{amsfonts, amsthm}
\usepackage{bbm}
\usepackage[hmargin=1in,vmargin=1in]{geometry}
\usepackage{mathdots}
\usepackage{stmaryrd}
\usepackage{MnSymbol}
\usepackage{bbm}
\usepackage[hidelinks]{hyperref}

\allowdisplaybreaks

\def\XXint#1#2#3{{\setbox0=\hbox{$#1{#2#3}{\int}$ }
		\vcenter{\hbox{$#2#3$ }}\kern-.6\wd0}}


\renewcommand{\Re}{\operatorname{Re}}

\DeclareMathSymbol{\intprod}{\mathbin}{MnSyC}{'270}
\newcommand{\LB}{\left[}
\newcommand{\RB}{\right]}
\newcommand{\LA}{\left\langle}
\newcommand{\RA}{\right\rangle}

\newcommand{\N}{{\mathbb N}}
\newcommand{\C}{{\mathbb C}}
\renewcommand{\P}{{\mathbb P}}

\newcommand{\R}{{\mathbb R}}

\newcommand{\eps}{\varepsilon}

\newcommand{\supp}{{\mathrm{supp} \, }}
\renewcommand{\div}{{\mathrm{div} }}

\newcommand{\dbar}{{\bar{\partial}}}
\newcommand{\T}{{\mathcal{T}}}

\newcommand{\dist}{\mathrm{dist}}

\newcommand{\pd}{{\partial}}

\newcommand{\inj}{{\mathrm{inj} }}
\newtheorem{thm}{Theorem}[section]
\newtheorem{lemma}[thm]{Lemma}
\newtheorem*{lemma*}{Lemma}
\newtheorem{prop}[thm]{Proposition}

\newtheorem{cor}[thm]{Corollary}

\newtheorem*{conj*}{Conjecture}

   \newtheoremstyle{others}
     {3pt}
     {2pt}
     {}
     {}
     {\bf}
     {.}
     {.5em}
     {}

\theoremstyle{others}
\newtheorem{rmk}[thm]{Remark}
\newtheorem*{rmk*}{Remark}
\newtheorem{defn}[thm]{Definition}

\numberwithin{equation}{section}

\setcounter{tocdepth}{1}

 \DeclareMathOperator{\tr}{tr}
 

 \newcommand{\norm}[1]{\Vert#1\Vert}
 
 \def\<{\left\langle} \def\>{\right\rangle}
 \def\({\left(} \def\){\right)}
 
 \newcommand{\be}{\beta}
 \newcommand{\ld}{\lambda}
 
 \newcommand{\de}{\delta}
 \newcommand{\De}{\Delta}
 \newcommand{\ep}{\varepsilon}
 \newcommand{\Si}{\Sigma}
 \newcommand{\si}{\sigma}
 \newcommand{\om}{\omega}
 \newcommand{\Om}{\Omega}
 \newcommand{\ga}{\gamma}
 \newcommand{\Ga}{\Gamma}
 \newcommand{\ka}{\kappa}
 \renewcommand{\th}{\theta}

 \renewcommand{\b}{\bar}
 \renewcommand{\o}{\overline}
 \newcommand{\p}{\partial}
 \newcommand{\n}{\nabla}
 \newcommand{\bp}{\b{\p}}

\newcommand{\E}{\mathcal{E}}

\begin{document}

\title{Harmonic map flow for almost-holomorphic maps}
\author{Chong Song}
\address{Xiamen University, Fujian, China}
\email{songchong@xmu.edu.cn}
\author{Alex Waldron}
\address{University of Wisconsin, Madison}
\email{waldron@math.wisc.edu}

\begin{abstract}
Let $\Si$ be a compact oriented surface and $N$ 
a compact K\"ahler manifold with nonnegative holomorphic bisectional curvature. For a solution of 
harmonic map flow starting from an almost-holomorphic map $\Sigma \to N$ (in the energy sense), the limit at each singular time extends continuously over the bubble points and no necks appear.
\end{abstract}

\maketitle


\thispagestyle{empty}

\tableofcontents

\section{Introduction}

\subsection{Background}

Let $(M, g)$ and $(N, h)$ be compact Riemannian manifolds. For any $C^1$ map $u: M \to N,$ we may define the \emph{Dirichlet energy}
\[ E(u)=\frac12\int_M|du|^2 dV_g.\]
Critical points of $E(u)$ are referred to as \emph{harmonic maps} and play a fundamental role in geometric analysis.


\emph{Harmonic map flow} is the downward gradient flow of the Dirichlet functional:
\begin{equation}\label{e:hm}
  \frac{\p u}{\p t} =\tr_g\n du.
\end{equation}
In 1964, Eells and Sampson~\cite{EellsSampson1964} introduced the evolution equation (\ref{e:hm}) and proved their foundational theorem: if $N$ has nonpositive sectional curvature, then 
harmonic map flow smoothly deforms any initial map to a harmonic map in the same homotopy class. 

The situation becomes more challenging when the target manifold is allowed to have positive sectional curvatures.
We shall focus on the case of a two-dimensional domain, 
where the Dirichlet functional is conformally invariant and enjoys a rich variational theory \cite{SacksUhlenbeck1981}. 
In 1985, 
Struwe~\cite{Struwe1985} constructed a global weak solution of (\ref{e:hm}) starting from an arbitrary $L^2_1$ initial map. 
The Struwe solution is regular away from a finite set of spacetime points where the Dirichlet energy may possibly concentrate, forming a singularity. 
In 1992, 
Chang, Ding, and Ye \cite{CDY1992} proved 
that these finite-time singularities
must occur in the scenario of rotationally symmetric maps between 2-spheres; they are an inevitable feature of the theory. 

During the 1990s, a detailed picture of singularity formation in 2D harmonic map flow emerged from the work of several authors. 
Given a singular time $0 < T< \infty,$ 
the limit
$$u(T) = \lim_{t\nearrow T} u(t),$$
referred to as the \emph{body map}, exists weakly in $L^2_1$ and smoothly away from the singular set $\{x_i\}_{i = 1}^K$ (this already follows from \cite{Struwe1985}). 
By rescaling around a well-chosen sequence of spacetime points approaching 
$(x_i, T),$ one obtains 
a \emph{bubble tree}\footnote{For an introduction to the bubble-tree concept, see Parker \cite{Parker2003}.}
consisting of finitely many harmonic maps
$$\phi_{i,j} : S^2\to N,$$ 
where $j = 1, \ldots, L_i.$
These satisfy the following \emph{energy identity} \cite{Qing1995, DingTian1995, Wang1996}: 
\begin{equation}\label{alltogetherenergyidentity}
\lim_{t \nearrow T}E(u(t))=E(u(T))+\sum_{i = 1}^K \sum_{j = 1}^{L_i} E(\phi_{i,j}).
\end{equation}
Moreover, \emph{necks} cannot form between the bubbles; i.e., for each $i,$ the subset
$$\cup_j \phi_{i,j}(S^2) \subset N$$ 
must be connected \cite{QingTian1997, LinWang1998}.

For some time, it was believed that the body map $u(T)$ should always extend continuously across the singular set \cite{LinWang1998,Qing2003}---that is,
until 2004, when Topping \cite{Topping2004-mathz} was able to construct an example 
in which $u(T)$ has an essential singularity. This may have caused interest in the subject to decline in the following years. 

However, Topping's counterexample depends on the construction of a pathological metric on the target manifold. 
In the same paper \cite{Topping2004-mathz}, he conjectured that when the metric on $N$ is sufficiently well behaved (specifically, real-analytic), 
$u(T)$ will have only removable discontinuities.
The conjecture is usually taken to include the statement that necks cannot form between the bubbles and the body map, which is currently known only at infinite time. 
Topping \cite{Topping2004-cvpde} proved that both properties follow from H\"older continuity of the Dirichlet energy $E(u(t))$ with respect to $t < T,$ although this assumption is difficult to verify in practice. 

\subsection{Main results}

This paper establishes the continuity and no-neck properties under 
a set of hypotheses familiar from the classical theory of harmonic maps \cite{ SiuYau1980, Wood1979} as well as later work on harmonic map flow \cite{Topping1997, LiuYang2010}. 
In the K\"ahler setting, the Dirichlet energy decomposes into holomorphic and anti-holomorphic parts:
\[ E(u)=\int |\p u|^2dV_g+\int |\bar{\p}u|^2dV_g =: E_{\p}(u)+E_{\bp}(u).\]
Topping's earlier work \cite{Topping1997} on rigidity of (\ref{e:hm}) at infinite time was based on assuming that $u$ is almost (anti-)holomorphic, i.e., that either
$E_{\p}(u)$ or $E_{\bp}(u)$ is small. 
Liu and Yang \cite{LiuYang2010} generalized
Topping's rigidity theorem using a Bochner technique that also requires a positivity assumption on the curvature of $N.$ 
We make the same assumptions in our main theorem, which follows.


\begin{thm}\label{thm:main}
Given a compact K\"{a}hler manifold $(N,h)$ with nonnegative holomorphic bisectional curvature, 
there exists a constant $\delta_0 > 0$ as follows.

Let $(\Sigma, g)$ be a compact, oriented, Riemannian surface and $u:\Si \times[0,\infty)\to N$ a weak solution (in Struwe's sense) of harmonic map flow with either $E_{\bp}(u(0)) <\delta_0$ or $E_{\p}(u(0)) < \delta_0.$
For each singular time $T < \infty,$ the map $u(T)$ is $C^\mu$ for each $\mu < 1.$ 
Moreover, 
given any sequence of times $t_n \nearrow T$ or $\infty,$ 
the maps $u(t_n)$ sub-converge in the bubble-tree sense, satisfying the energy identity (\ref{alltogetherenergyidentity}) and without necks---i.e., for each $i,$ $\cup_j \phi_{i,j}(S^2)$ 
is connected and contains $\lim_{x \to x_i} u(x,T).$

Here, we may take $\delta_0 = \frac{c}{\sup_N |H_N|},$ where $H_N$ is the holomorphic sectional curvature of $N$ and $c$ is a universal constant.
\end{thm}




Note that by the proof of the generalized Frankel conjecture due to Mori \cite{Mori1979}, Siu-Yau \cite{SiuYau1980}, and Mok \cite{Mok1988}, our curvature assumption implies that $N$ is biholomorphic to a Hermitian symmetric space, and to $\C\P^n$ in the strictly positive case. However, the metric $h$ need not be symmetric: for example, any metric on the 2-sphere with nonnegative curvature satisfies the hypothesis. Meanwhile, even in the case $\Sigma = N = S^2_{round},$ our results are new. 

We also note that the bubble-tree statement in Theorem \ref{thm:main} allows for an arbitrary sequence $t_n \nearrow T \text{ or } \infty,$ by contrast with the work discussed above.
This is possible because our results follow from parabolic estimates rather than from an analysis of sequences of low-tension maps.

\subsection{Outline of proof}
The proof of Theorem \ref{thm:main} involves the following three steps.




First, we prove that $|\bp u|$ remains uniformly bounded along the flow in our scenario (Theorem \ref{thm:e''bound}). This follows from a version of the standard $\varepsilon$-regularity estimate (Proposition \ref{prop:vepsilonreg}) which is compatible with 
the split Bochner formula (Lemma \ref{l:evolution-of-e}). 
%

Next, we show (for general target manifolds) 
that a uniform $L^q$ bound on the \emph{stress-energy tensor} (\ref{stressenergy}) implies the following estimate on the (outer) energy scale at a finite-time singularity (Theorem \ref{thm:lambdaest}-Corollary \ref{cor:lambdatozero}):
\[
\lambda(t) = O(T - t)^{\frac{q}{2}} \quad (t \nearrow T).
\]
For $q > 1,$ this strengthens the standard ``type-II'' bound $\lambda(t) = o(T-t)^{\frac12}$ for (\ref{e:hm}) in dimension two. 
The improvement is crucial; indeed, it fails in Topping's counterexample (see \cite{Topping2004-mathz}, Theorem 1.14e). In the context of Theorem \ref{thm:main}, the uniform bound on $|\bp u|$ implies an $L^2$ bound on the stress-energy tensor, yielding the improved blowup rate with $q = 2$ (Corollary \ref{cor:nnhbscblowuprate}). 

Last, we use this small amount of ``spare time'' before the blowup to construct a supersolution for the angular component of $du$ in the neck region (\S \ref{ss:evolutionofangular}-\ref{ss:constructionofsuper}). Using the stress-energy bound (or the uniform bound on $|\bp u|$) once more, we obtain strong decay estimates on the full energy density 
(Theorems \ref{thm:decayest}-\ref{thm:dbaruboundeddecayest}), leading directly to our main theorems (\S \ref{sec:maintheorems}).

\subsection{Acknowledgements}
C. Song is partially supported by NSFC no. 11971400. A. Waldron is partially supported by DMS-2004661. The authors thank K. Gimre for editorial comments.

\vspace{5mm}

\section{Basic identities}\label{sec:basic}

In this section, we derive all of the identites required to obtain estimates along the flow. The formulae in \S \ref{ss:intrinsicviewpoint}-\ref{ss:bochnerformula} apply to general domains and targets, while those in \S \ref{ss:holomorphicsplitting}-\ref{ss:splitbochner} specialize to a surface domain and K{\"a}hler target. Two basic references are the reports by Eells and Lemaire \cite{EellsLemaire1995} and the textbook by Schoen and Yau \cite{SchoenYau1997}.


\subsection{Intrinsic viewpoint on harmonic map flow}\label{ss:intrinsicviewpoint}

Let $(M,g)$ and $(N,h)$ be compact Riemannian manifolds. Given a smooth map $u : M \to N,$ we write $du$ for its differential, which may be viewed as a section of
$$\E = u^* TN\otimes T^*M .$$
We shall use the notation $\LA \cdot, \cdot \RA$ for the pullback of $h$ to $u^* TN,$ combined appropriately with $g$ on tensor products with $T^*M.$
The \emph{energy density} of $u$ is given by
$$e(u) = \frac{1}{2}|du|^2 = \frac{1}{2} g^{ij}\<\p_iu, \p_ju\>,$$
and the \emph{Dirichlet functional} by
\[ E(u) = E_g(u) = \int_M e(u) \, dV_g.\]
We denote by $\nabla$ the pullback to $u^*TN$ of the Levi-Civita connection of $h,$ which we will combine with that of $g$ on tensors.

Under an infinitesimal variation
\[
\begin{split}
u & \to u + \delta u \\
g & \to g + \delta g,
\end{split}
\]
we have
\begin{equation}\label{variationofdirichlet}
\begin{split}
\de E_g(u) 
&=\frac{1}{2} \int (2g^{ij} \LA \partial_i u, \nabla_j \delta u \RA + \delta g^{ij}\<\p_iu, \p_ju\>+g^{ij}\<\p_iu, \p_ju\>\frac12g^{k \ell}\delta g_{k \ell}) dV_g \\
&= \int \left( - g^{ij} \LA \nabla_j \partial_i u, \delta u \RA + \frac12 \left( - g^{ik}g^{lj}\<\p_k u,\p_l u\>+\frac12 |d u|^2 g^{ij} \right) \delta g_{ij} \right) dV_g\\
&= - \int \left( \LA \mathcal{T}(u), \delta u \RA + \frac{1}{2} \<S,\delta g\> \right) dV_g.
\end{split}
\end{equation}
Here
\begin{equation}\label{tensionfield}
\mathcal{T}(u) = \tr_g \nabla du
\end{equation} 
is the \emph{tension field}, and
\begin{equation}\label{stressenergy}
S(u) =\< du\otimes du\> -\frac12 |d u|^2g
\end{equation}
is the \emph{stress-energy tensor}. The former is a section of $u^*TN$, and the latter of $\mathrm{Sym}^2 T^*M.$

Recall that harmonic map flow is the evolution equation
\begin{equation*}
\frac{\partial u}{\partial t} = \mathcal{T} (u)
\end{equation*}
for the map $u = u(x,t).$ 
This is the negative gradient flow of the Dirichlet functional, where the metric $g$ is fixed in time. From (\ref{variationofdirichlet}), we have the \emph{global energy identity}
\begin{equation}\label{globalenergyidentity}
E(u(t_2)) + \int_{t_1}^{t_2} \!\!\!\! \int_M |\mathcal{T} (u)|^2 \, dV_g \, dt = E(u(t_1))
\end{equation}
for a sufficiently regular solution of (\ref{e:hm}).
\begin{rmk}
In what follows, we shall always assume that $u$ solves (\ref{e:hm}) with $L^2_1$ initial data on $\LB 0, T \right),$ where $0 < T \leq \infty,$ and is smooth for $0 < t < T.$ 
Since the Struwe solution is a concatenation of finitely many such solutions, this will entail no loss of generality. 
\end{rmk}

\vspace{5mm}

\subsection{Pointwise energy identity}\label{ss:pointwiseenergyidentity} The following brief calculation yields a useful ``pointwise'' version of (\ref{globalenergyidentity}), which appears to be new. 
Using normal coordinates at a point, we take the divergence of $S:$
\begin{equation}\label{divS}
\begin{split}
\left(\div \, S\right)_j = \n_iS_{ij} & =\<\n_i\partial_iu, \partial_ju\>+\<\partial_i u, \n_i\partial_j u\>-\frac12\n_j |d u|^2 \\
& = \<\mathcal{T}(u), \partial_ju\>+\<\partial_i u, \n_j \partial_i u\>-\frac12\n_j |d u|^2\\
& = \< \mathcal{T}(u), \partial_j u \>.
\end{split}
\end{equation}
Here we have used the fact
\begin{equation}\label{partialscommute}
\nabla_i \partial_j u = \nabla_j \partial_i u,
\end{equation}
which follows from torsion-freeness of the Levi-Civita connection(s). The identity (\ref{divS}) appears in the paper of Baird and Eells \cite[(2.10)]{BairdEells1964}.

Taking another divergence, we obtain
\[ \div (\div ( S ) ) =\<\n\mathcal{T}(u), d u\>+| \mathcal{T}(u) |^2.\]
On the other hand, for a solution $u(x,t)$ of harmonic map flow, we have 
\begin{equation*}
\frac{\p e(u) }{\p t}  = \left\langle \nabla \frac{\pd u}{\pd t}, du \right\rangle = \left\langle \nabla \mathcal{T}(u), du \right\rangle.
\end{equation*}
We obtain the \emph{pointwise energy identity}:
\begin{equation}\label{pointwiseenergy}
\frac{\p e(u)}{\p t}  + |\mathcal{T}(u)|^2 = \div ( \div ( S ) ).
\end{equation}

\vspace{5mm}

\subsection{Bochner formula}\label{ss:bochnerformula} 
Let $\nabla$ (as above) be the connection induced on $\E = u^*TN\otimes T^*M $ by the pullback of the Levi-Civita connection on $N,$ coupled with that of $M.$ The connection $\nabla$ defines a (crude) Laplace operator $\Delta = - \nabla^* \nabla$ on $\E.$ We calculate:

\begin{equation}\label{prebochner}
\begin{split}
\left( \Delta du \right)_j {}^a = \nabla_i \nabla_i \p_j u^{a} & = \nabla_i \nabla_j \p_i u^a \\
& = \nabla_j \nabla_i \p_i u^{a} + \LB \nabla_i, \nabla_j \RB \p_i u^a  \\
& = \nabla_j \mathcal{T}(u)^a - {}^M \! R_{ij}{}^k{}_i \p_k u^a + (u^*{}^N \! R)_{ij}{}^a{}_{b} \p_i u^b \\
& = \nabla_j \mathcal{T}(u)^a - {}^M \! R_{ij}{}^k{}_i \p_k u^a + {}^N \! R_{c d}{}^a{}_b \p_i u^c \p_j u^d \p_i u^b \\
& = \nabla_j \mathcal{T}(u)^a + {}^M \! Ric_{jk} \p_k u^a - {}^N \! R^a {}_{bcd} \p_i u^b \p_j u^c \p_i u^d.
\end{split}
\end{equation}
Here we have used (\ref{partialscommute}) in the first line, and the Bianchi identities in the last line.

Now, given a map $u: M \times \LB 0, T \right) \to N,$ we shall write $\frac{\nabla}{\partial t}$ for the time-component of the covariant derivative induced by the pullback of the Levi-Civita connection to $M \times \LB 0, T \right).$ 
The identity
\begin{equation*}
\frac{\nabla}{\partial t} du = \nabla \frac{\p u}{\p t}
\end{equation*}
is easily checked in normal coordinates. Assuming that $u(x,t)$ is a solution of harmonic map flow, we obtain
\begin{equation}\label{evolofdu}
\frac{\nabla}{\partial t} du = \nabla \mathcal{T}(u).
\end{equation}
Then (\ref{prebochner}) becomes
\begin{equation}\label{nablauajevolution}
\left( \left( \frac{\nabla}{\partial t} - \Delta \right) du\right) {}_j {}^a = - \left( {}^M \! Ric_{ij} \right) \p_i u^a + \left( {}^N \! R^a{}_{bcd} \right) \p_i u^b \p_j u^c \p_i u^d.
\end{equation}
Taking an inner product with $du,$ and using the identity
\[
\Delta e(u) = \LA \Delta du , du \RA + |\n du|^2,
\]
we obtain
\begin{equation}
\begin{split}
& \left( \frac{\p}{\p t}-\Delta \right) e(u) = - | \n du|^2 - h_{a b} \left({}^M \! Ric^{ij} \right) \left(\p_i u^a, \p_j u^b \right) \\
& \qquad \qquad \qquad \qquad + g^{ik}g^{j\ell} \left( {}^N \! R_{a b c d} \right) \left( \p_i u^a, \p_j u^b, \p_k u^c, \p_\ell u^d \right).
\end{split}
\end{equation}
This yields the differential inequality
\begin{equation}\label{eweitz}
\left( \frac{\p}{\p t}-\Delta \right) e(u) \le -|\n d u|^2 + C_N e^2(u) + C_M e (u).
\end{equation}
Here, $C_N$ is a constant times an upper bound for the sectional curvature of $N,$ and $-C_M$ is a constant times a lower bound for the Ricci curvature of $M.$

\begin{rmk}
If the sectional curvature of $N$ is non-positive, then one may let $C_N = 0$ in (\ref{eweitz}), which gives a uniform bound on $e(u)$ by Moser's Harnack inequality. This is the key point in the proof of the Eells-Sampson Theorem \cite{EellsSampson1964}.
\end{rmk}

\vspace{5mm}

\subsection{Holomorphic splitting in the K{\"a}hler case}\label{ss:holomorphicsplitting}


We now restrict to the case that $M = \Sigma$ is an oriented 
surface 
and $N$ is a 
Hermitian manifold of complex dimension $n,$ i.e., carries an integrable almost-complex structure compatible with $h.$ 

The complexified tangent spaces decompose as
\begin{equation}\label{tcsplittings}
T \Sigma^\C = T^{1,0}\Sigma \oplus T^{0,1}\Sigma, \qquad T N^\C = T^{1,0}N \oplus T^{0,1} N.
\end{equation}
As an element of $\mathcal{E}^\C = \mathcal{E} \otimes \C,$ the differential of $u$ decomposes under the splittings (\ref{tcsplittings}) as
\begin{equation}\label{dusplitting}
du = \left( \begin{array}{cc}
\pd u & \bp u \\[2mm]
\pd \bar{u} & \overline{\pd u} \end{array} \right).
\end{equation}
Here we abuse notation slightly; 
letting $z$ be a local conformal coordinate on $\Sigma$ near a point $p,$ and $\{w^{\alpha} \}_{\alpha = 1}^n$ local holomorphic coordinates on $N$ near $u(p),$ we have
\begin{equation}
\begin{aligned}
\p u & = \frac{\partial w^\alpha}{\partial z} \, dz \otimes \frac{\pd}{\pd w^\alpha} ,  \qquad & \bp u = \frac{\pd w^\alpha}{ \pd \bar{z}}  d \bar{z} \otimes \frac{\pd}{\pd w^\alpha} \\
\p \bar{u} & = \frac{ \pd \overline{w^\alpha}}{\pd z} \, dz \otimes \frac{\pd}{\pd \bar{w}^\alpha}, \qquad & \overline{\p u} = \frac{ \pd \overline{w^\alpha } }{\pd \bar{z}} d \bar{z} \otimes \frac{\pd}{\pd \bar{w}^\alpha},
\end{aligned}
\end{equation}
where
\[
\begin{split}
dz = dx + i dy, \qquad & \frac{\p}{\p z} = \frac12 \left( \frac{\p}{\p x} - i \frac{\p}{\p y} \right) \\
d\bar{z} = dx - i dy, \qquad & \frac{\p}{\p \bar{z} } = \frac12 \left( \frac{\p}{\p x} + i \frac{\p}{\p y} \right), \quad etc.
\end{split}
\]
We shall also write
\begin{equation*}
\begin{split}
w_z & = \frac{\p w^\alpha}{\partial z} \frac{\p}{\p w^\alpha}, \quad w_{\bar{z}} = \frac{\p w^\alpha }{ \partial \bar{z} } \frac{\p}{\p w^\alpha} \\
\bar{w}_{\bar{z}} & = \overline{w_z}, \qquad \bar{w}_z = \overline{w_{\bar{z}}}.
\end{split}
\end{equation*}
Let $h(\cdot ,\cdot )$ denote the complex-linear extension of the metric to $TN^\C.$ 
Then
\begin{equation}\label{habstuff}
h\left( \frac{\p}{\p w^\alpha}, \frac{\p}{\p w^\beta} \right) = 0 = h\left( \frac{\p}{\p \bar{w}^\alpha}, \frac{\p}{\p \bar{w}^\beta} \right),
\end{equation}
and the complex $n \times n$ matrix
$$h_{\alpha \bar{\beta}} = h\left( \frac{\p}{\p w^\alpha}, \frac{\p}{\p \bar{w}^\beta} \right)$$
is Hermitian, with $h_{\alpha \bar{\beta}} = \overline{h_{\beta \bar{\alpha}}},$ and positive-definite.
Denote the Hermitian inner product corresponding to $h$ by
\begin{equation}
\LA v,w \RA = h(v, \bar{w} )
\end{equation}
for $v, w \in T N^\C,$ which agrees with the ordinary metric on the real tangent bundle $TN \subset TN^\C.$ Denote the corresponding norm by $| \cdot |.$ 

Suppose now that $N$ is K\"ahler. The K\"ahler form on $N$ can be written
\begin{equation*}
\omega_N = i h_{\alpha \bar{\beta}} dw^\alpha \wedge d\bar{w}^{\beta}.
\end{equation*}
We also extend $g$ complex-linearly, and let
\begin{equation}\label{sigma2}
\sigma^2(z) = g \left( \frac{\pd}{\pd z}, \frac{\pd}{\pd \bar{z}} \right) > 0.
\end{equation}
The metric and K\"ahler form on $\Sigma$ can be written
\begin{equation*}
g = g_\Sigma = \sigma^2 \left( dz \otimes d\bar{z} + d\bar{z} \otimes dz \right), \qquad \omega_\Sigma = i \sigma^2 dz \wedge d\bar{z}.
\end{equation*}
Let
\begin{equation}
\begin{split}
e_{\p}(u) & = |\p u|^2 = \sigma^{-2} |w_z|^2 = \sigma^{-2} h_{\alpha \bar{\beta}} w^\alpha_z \bar{w}^\beta_{\bar{z}} \\
e_{\bp}(u) & = |\bp u|^2 = \sigma^{-2} |w_{\bar{z}} |^2  = \sigma^{-2} h_{\alpha \bar{\beta}} w^\alpha_{\bar{z}} \bar{w}^\beta_z.
\end{split}
\end{equation}
Then according to (\ref{dusplitting}), the energy density decomposes as
\[e(u) =\frac12|du|^2 
= e_{\p}(u)+e_{\bp}(u).\]
On the other hand, the K{\"a}hler form on $N$ pulls back to
\[
\begin{split}
u^*\om_N & = i h \left( \left( \partial u + \bp u \right) \wedge \left( \partial \bar{u} + \overline{ \p u} \right) \right) \\
& = i \left( |w_z|^2 - |w_{\bar{z}} |^2 \right) dz \wedge d \bar{z} \\
& = (e_{\p}(u)-e_{\bp}(u)) \om_\Sigma.
\end{split}
\]
Thus
\begin{equation}\label{kappadef}
E_{\p}(u) - E_{\bp}(u) 
= \int_\Si u^*\om_N=:\ka
\end{equation}
is an invariant of the homotopy class of $u;$ the energy of $u$ satisfies
\begin{equation}\label{Esplitting}
E(u)=E_{\p}(u)+E_{\bp}(u)=2E_{\p}(u)-\ka=2E_{\bp}(u)+\ka.
\end{equation}
Hence, an (anti-)holomorphic map minimizes the Dirichlet energy within its homotopy class. It also follows from (\ref{Esplitting}) that both $E_{\p}(u(t))$ and $E_{\bp}(u(t))$ decrease in time along a solution of harmonic map flow. 

\vspace{5mm}

\subsection{Hopf differential and stress-energy tensor}\label{ss:stresshopf} We now define the \emph{Hopf differential}
$$\Phi(u) = h \! \left( du \otimes du \right) ^{2,0} \in \mathrm{Sym}^2 \Omega^{1,0}_M.$$
In local coordinates, we have
\begin{equation}\label{hopfincoordinates}
\begin{split}
\Phi(u) & = h\left( \left( \partial u + \partial \bar{u} \right) \otimes \left( \partial u + \partial \bar{u} \right) \right) \\
& = \left( h(w_z, w_z) + h(\bar{w}_z, \bar{w}_z) + h \left( w_z, \bar{w}_z  \right) + h(\bar{w}_z, w_z) \right) dz \otimes dz \\
& = 2 \LA w_z, w_{\bar{z}} \RA dz \otimes dz,
\end{split}
\end{equation}
where we have used (\ref{habstuff}).
It follows from (\ref{hopfincoordinates}) that the Hopf differential of a harmonic map is holomorphic. 
We shall not use this fact, but only the following identity which holds for general maps.

\begin{lemma}\label{lemma:stress-tensor}
For a differentiable map $u : \Si \to N,$ the stress-energy tensor is given by
\begin{equation}\label{stresshopfidentity}
 S(u)= 2 \Re \Phi (u).
\end{equation} 
In particular, the pointwise bound
\begin{equation}\label{hopfe'e''bound}
|S(u)|_g \leq 4 \sqrt{e_{\p}(u)e_{\bp}(u)}
\end{equation}
holds.
\end{lemma}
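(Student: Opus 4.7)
\medskip

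\noindent\textbf{Proof plan.} The identity $S(u)=2\Re\Phi(u)$ should be read as the assertion that, inside the symmetric $2$-tensor $\langle du\otimes du\rangle$, the $(1,1)$ piece precisely equals $\tfrac12|du|^2 g$, leaving $(2,0)+(0,2)=\Phi+\bar\Phi$ after subtraction. My plan is to verify this decomposition in the local complex coordinates $z$ on $\Sigma$ and $\{w^\alpha\}$ on $N$ already set up in \S\ref{ss:holomorphicsplitting}, and then extract the norm bound by an elementary Cauchy--Schwarz.

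First I would extend $h$ complex-linearly to $TN^{\C}$ and, using the expression
\[
du(\tfrac{\partial}{\partial z}) = w_z + \bar w_z \in T^{1,0}N \oplus T^{0,1}N,
\]
compute the three components of $\langle du\otimes du\rangle$ in the basis $dz\otimes dz$, $dz\otimes d\bar z + d\bar z \otimes dz$, $d\bar z \otimes d\bar z$. The isotropy relations (\ref{habstuff}) kill $h(w_z,w_z)$ and $h(\bar w_z, \bar w_z)$, leaving
\[
\langle du\otimes du\rangle_{zz} = 2h(w_z,\bar w_z) = 2\langle w_z,w_{\bar z}\rangle,
\]
which by (\ref{hopfincoordinates}) is exactly the $dz\otimes dz$-coefficient of $\Phi(u)$. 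Likewise $\langle du\otimes du\rangle_{\bar z\bar z}$ gives the coefficient of $\bar\Phi(u)$.

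Next I would verify that the $(1,1)$ part is removed by the subtraction $-\tfrac12|du|^2 g$. A direct computation using $|w_z|^2 = h(w_z,\bar w_{\bar z})$ gives
\[
\langle du\otimes du\rangle_{z\bar z} = |w_z|^2 + |w_{\bar z}|^2 = \sigma^2\bigl(e_\partial(u)+e_{\bar\partial}(u)\bigr) = \tfrac12|du|^2 g_{z\bar z},
\]
using (\ref{sigma2}) and the splitting of $e(u)$. Combining with the $(2,0)$ and $(0,2)$ computations yields $S(u) = \Phi(u)+\overline{\Phi(u)} = 2\Re\Phi(u)$.

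For the norm bound, I would compute $|\Phi(u)|_g$ directly in complex coordinates: from $\Phi(u) = 2\langle w_z,w_{\bar z}\rangle\, dz\otimes dz$ and $|dz\otimes dz|_g$ determined by $g^{z\bar z} = \sigma^{-2}$, one finds $|\Phi(u)|_g \le C\,\sigma^{-2}|\langle w_z,w_{\bar z}\rangle|$ for an explicit constant $C$. Applying Cauchy--Schwarz, $|\langle w_z,w_{\bar z}\rangle|\le |w_z||w_{\bar z}|$, and rewriting via $e_\partial(u)=\sigma^{-2}|w_z|^2$ and $e_{\bar\partial}(u)=\sigma^{-2}|w_{\bar z}|^2$, gives $|S(u)|_g = 2|\Phi(u)+\bar\Phi(u)|_g^{1/2}$-type estimate bounded by $4\sqrt{e_\partial(u)e_{\bar\partial}(u)}$. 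There is no real obstacle; the only care needed is in tracking the conformal factor $\sigma^2$ and the normalization conventions for the Hermitian inner product and the complex-linear extension, so that the constants come out as stated.
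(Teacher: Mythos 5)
Your proposal is correct and takes essentially the same route as the paper: expand $\langle du\otimes du\rangle$ in the $dz,d\bar z$ basis using the isotropy relations (\ref{habstuff}), observe that the $(1,1)$ block equals $\tfrac12|du|^2 g$ while the $(2,0)+(0,2)$ block equals $2\Re\Phi(u)$, then apply Cauchy--Schwarz for the norm bound. (One small slip at the end: you want $|S(u)|_g = |\Phi(u)+\overline{\Phi(u)}|_g$, not a square root, but this does not change the conclusion; in fact the sharp constant one gets is $2\sqrt{2}$, so the stated bound $4\sqrt{e_\partial e_{\bar\partial}}$ is not optimal, only sufficient.)
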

\begin{proof}
Recall the definition (\ref{stressenergy}) of $S(u).$
Since $du$ is real, and in view of (\ref{habstuff}), we have
\[\begin{aligned}
  \<du\otimes du\> = h(du \otimes du) & = h\left( \p u \otimes \o{\p u} \right) + h \left( \p u\otimes \p \bar{u} \right)
  + h\left( \o{\p u}\otimes \p u \right) + h \left( \o{\p u}\otimes \bp u \right) \\
  &\quad + h\left( \bp u \otimes \overline{\p u} \right) + h \left( \bp u \otimes \pd \bar{ u } \right)
  + h \left( \pd \bar{u} \otimes \p u \right) + h \left( \pd \bar{u} \otimes \bp u \right) \\
  &=(|w_z|^2+|w_{\bar{z}} |^2)(dz\otimes d\bar{z}+d\bar{z}\otimes dz)\\
  &\quad+2\(\<w_z, w_{\bar{z}}\>dz\otimes dz+\<w_{\bar{z}}, w_z\>d\bar{z}\otimes d\bar{z}\)\\
  &=\frac12|du|^2g+4\Re\(\<w_z, w_{\bar{z}}\>dz\otimes dz\).
\end{aligned}\]
Rearranging yields (\ref{stresshopfidentity}). Applying (\ref{hopfincoordinates}) and Cauchy-Schwarz to (\ref{stresshopfidentity}) yields (\ref{hopfe'e''bound}).
\end{proof}

\begin{rmk}
There is a strong analogy between harmonic maps from a Riemann surface to an (almost-)complex manifold and Yang-Mills connections on 4-manifolds.
Letting $j$ and $J$ be the complex structures on $\Sigma$ and $N,$ respectively, define an operator
\[
\begin{split}
& \star: \E\to \E\\
& \phi\mapsto J \circ \phi \circ j.
\end{split}
\]
Then $\star^2 = 1,$ and the differential $du$ decomposes along the $\pm$-eigenspaces of $\star$ as
\[ du= du^+ + du^- \]
where
\[
\begin{split}
du^+& = \frac12(du + J\circ du \circ j)=2 \Re \bp u, \\
du^-& = \frac12(du - J \circ du \circ j)=2 \Re \p u .
\end{split}
\]
The stress energy tensor then takes the form
\[ S=2\<du^+ \otimes du^-\>,\]
which can be compared with \cite[Remark 2.8]{Waldron2019}.

\end{rmk}

\vspace{5mm}

\subsection{Split Bochner formula}\label{ss:splitbochner} We also require a split Bochner formula 
that goes back to Schoen and Yau \cite{SchoenYau1978}, Toledo \cite{Toledo1979}, and Wood \cite{Wood1979} for harmonic maps, and has been exploited in a parabolic context by Liu and Yang \cite{LiuYang2010}.

Since $\Sigma$ and $N$ are K{\"a}hler, their complex structures 
commute with the $\C$-linear extension of $\nabla$ to $\mathcal{E}^\C.$ Hence, the operators on the LHS of (\ref{nablauajevolution}) preserve the splittings (\ref{tcsplittings}). Taking an inner product with $\bp u$ in (\ref{nablauajevolution}), we obtain
\begin{equation}\label{e''evolfirst}
\frac12\(\frac{\p}{\p t}-\Delta\)e_{\bp}(u) = - |\nabla \bp u |^2 + \LA \bp u , I \RA + \LA \bp u, II \RA,
\end{equation}
where $I$ and $II$ are the terms on the RHS of (\ref{nablauajevolution}). These may be expanded as follows. Since $Ric_\Sigma = K_\Sigma g,$ we have
\begin{equation}
\LA \bp u , I \RA = - K_\Sigma |\bp u|^2 = -K_\Sigma e_{\bp}(u).
\end{equation}
For the second term, choose a conformal coordinate $z = x + iy$ such that $\{ \frac{\p}{\p x}, \frac{\p}{\p y} \}$ form an orthonormal basis at $p$ (in particular, $\sigma^2(p) = \frac{1}{2}$). At the point $p,$ we have
\begin{equation}\label{dbaruII}
\begin{split}
\LA \bp u, II \RA & = {}^N \! R \left(\bp u \left( \frac{\p}{\p x} \right) , du \left( \frac{\p}{\p y} \right), du \left( \frac{\p}{\p x} \right) , du \left( \frac{\p}{\p y} \right) \right) \\
& \quad + {}^N \! R \left(\bp u \left( \frac{\p}{\p y} \right) , du \left( \frac{\p}{\p x} \right), du \left( \frac{\p}{\p y} \right), du \left( \frac{\p}{\p x} \right) \right).
\end{split}
\end{equation}
Since
\begin{equation*}
\frac{\p}{\p x} = \frac{\p}{\p z} + \frac{\p}{\p \bar{z}}, \qquad \frac{\p}{\p y} = i \left( \frac{\p}{\p z} - \frac{\p}{\p \bar{z}} \right)
\end{equation*}
we obtain
\begin{equation*}
\begin{split}
du\left( \frac{\p}{\p x} \right) & = \left( w_z^\alpha + w_{\bar{z}}^\alpha \right) \frac{\p}{\p w^\alpha} +  \left( \bar{w}_z^\alpha + \bar{w}_{\bar{z}}^\alpha \right) \frac{\p}{\p \bar{w}^\alpha} \\
du\left( \frac{\p}{\p y} \right) & = i \left( w_z^\alpha - w_{\bar{z}}^\alpha \right) \frac{\p}{\p w^\alpha} + i \left( \bar{w}_z^\alpha - \bar{w}_{\bar{z}}^\alpha \right) \frac{\p}{\p \bar{w}^\alpha} \\
\bp u \left( \frac{\p}{\p x} \right) & = w_{\bar{z}}^\alpha \frac{\p}{\p w^\alpha}, \qquad
\bp u \left( \frac{\p}{\p y} \right) = -i w_{\bar{z}}^\alpha \frac{\p}{\p w^\alpha}.
\end{split}
\end{equation*}
Substituting into (\ref{dbaruII}), and using the fact that ${}^N \! R$ is a section of $\Lambda^{1,1} \otimes \Lambda^{1,1},$ we obtain
\begin{equation}
\begin{split}
\LA \bp u, II \RA & = {}^N \! R_{\alpha \bar{\beta} \gamma \bar{\delta} } w^\alpha_{\bar{z} } i \left(\bar{w}^\beta_z - \bar{w}^\beta_{\bar{z}} \right) \left( w^\gamma_z + w^\gamma_{\bar{z}} \right) i \left( \bar{w}^\delta_z - \bar{w}^\delta_{\bar{z} } \right) \\
& + {}^N \! R_{\alpha \bar{\beta} \bar{\delta} \gamma} w^\alpha_{\bar{z} } i \left(\bar{w}^\beta_z - \bar{w}^\beta_{\bar{z}} \right) \left( \bar{w}^\delta_z + \bar{w}^\delta_{\bar{z}} \right) i \left( w^\gamma_z - w^\gamma_{ \bar{z} } \right) \\
& + {}^N \! R_{\alpha \bar{\beta} \gamma \bar{\delta} } (-i w^\alpha_{\bar{z} } ) \left(\bar{w}^\beta_z + \bar{w}^\beta_{\bar{z}} \right) i \left( w^\gamma_z - w^\gamma_{\bar{z}} \right) \left( \bar{w}^\delta_z + \bar{w}^\delta_{\bar{z} } \right) \\
& + {}^N \! R_{\alpha \bar{\beta} \bar{\delta} \gamma } (-i w^\alpha_{\bar{z} } ) \left(\bar{w}^\beta_z + \bar{w}^\beta_{\bar{z}} \right) i \left( \bar{w}^\delta_z - \bar{w}^\delta_{\bar{z}} \right) \left( \bar{w}^\gamma_z + \bar{w}^\gamma_{\bar{z} } \right) \\
& = 2 \, {}^N \! R_{\alpha \bar{\beta} \gamma \bar{\delta} } w^\alpha_{\bar{z} } \bar{w}^\beta_{z} \left( - \left( w^\gamma_z + w^\gamma_{\bar{z}} \right) \left( \bar{w}^\delta_z - \bar{w}^\delta_{\bar{z} } \right) + \left( w^\gamma_z - w^\gamma_{\bar{z}} \right) \left( \bar{w}^\delta_z + \bar{w}^\delta_{\bar{z} } \right) \right),
\end{split}
\end{equation}
where we have combined the first with the fourth, and the second with the third lines. This yields 
\begin{equation}
\begin{split}
\LA \bp u, II \RA & = 4 \, {}^N \! R_{\alpha \bar{\beta} \gamma \bar{\delta} } w^\alpha_{\bar{z}} \bar{w}^\beta_z \left( w^\gamma_z \bar{w}^\delta_{\bar{z}} -  w^\gamma_{\bar{z}} \bar{w}^\delta_{z} \right) \\
& = \sigma^{-4} \, {}^N \! R_{\alpha \bar{\beta} \gamma \bar{\delta} } \left( w^\alpha_{\bar{z}} \bar{w}^\beta_z w^\gamma_z \bar{w}^\delta_{\bar{z}} -  w^\alpha_{\bar{z}} \bar{w}^\beta_z w^\gamma_{\bar{z}} \bar{w}^\delta_{z} \right) =: q_1(u) + q_2(u).
\end{split}
\end{equation}
Returning to (\ref{e''evolfirst}), we obtain
\begin{equation*}\label{e''evol}
\frac12\(\frac{\p}{\p t}-\Delta\)e_{\bp}(u) = - |\nabla \bp u | - K_{\Sigma} e_{\bp}(u) + q_1(u) + q_2(u).
\end{equation*}
The condition that $h$ have nonnegative holomorphic bisectional curvature (see Goldberg and Kobayashi \cite{GoldbergKobayashi1967} or Schoen and Yau \cite{SiuYau1980}, Ch. VIII) is equivalent to the assumption that $q_1(u) \leq 0$ for any map $u.$ 
It is also clear that
\begin{equation*}
q_2(u) \leq C'_N e_{\bp}(u)^2,
\end{equation*}
where $C'_N$ is a constant times the supremum of the holomorphic sectional curvature of $N.$ 
Similar formulae hold for the holomorphic part of the energy density, $e_{\p}(u).$

We have shown:
\begin{lemma}[Liu and Yang \cite{LiuYang2010}]\label{l:evolution-of-e}
Assume that $N$ has nonnegative holomorphic bisectional curvature. For a solution $u(x,t)$ of (HM), we have
\begin{align}
  \frac12\(\frac{\p}{\p t}-\Delta\)e_{\p}(u)&\le -|\n\p u|^2-K_\Si e_{\p}(u)+C'_N e_{\p}(u)^2 \label{e:d-energy}\\
  \frac12\(\frac{\p}{\p t}-\Delta\)e_{\bp}(u)&\le -|\n\bp u|^2-K_\Si e_{\bp}(u)+C'_N e_{\bp}(u)^2. \label{e:dbar-energy}
\end{align}
\end{lemma}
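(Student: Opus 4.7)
The plan is to apply the Bochner-type identity (\ref{nablauajevolution}) with the target-geometry terms projected onto the $(0,1)$ component in the K\"ahler setting, then use the nonnegative bisectional curvature hypothesis to eliminate one of the two resulting quartic-in-$du$ curvature contributions. The $(1,0)$ case will follow by the same argument with the roles of $\pd u$ and $\bp u$ reversed, so I only need to treat the second inequality.

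First, I would observe that since both $\Sigma$ and $N$ are K\"ahler, their complex structures $j$ and $J$ are parallel with respect to the Levi-Civita connections, so the $\C$-linear extension of $\nabla$ to $\mathcal{E}^\C$ commutes with the decompositions in (\ref{tcsplittings}). Consequently, the heat-type operator on the LHS of (\ref{nablauajevolution}) preserves the Hodge splitting of $du,$ and restricting to the $(0,1)$ part yields an evolution equation for $\bp u$ alone. Taking the pointwise Hermitian inner product with $\bp u$ and using
\[ \tfrac{1}{2}(\p_t - \Delta)|\bp u|^2 = -|\n \bp u|^2 + \LA \tfrac{\nabla}{\p t}\bp u - \Delta \bp u , \bp u \RA \]
produces the preliminary identity (\ref{e''evolfirst}), with curvature terms $I$ (from $Ric_\Sigma$) and $II$ (from ${}^N\! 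R$) to be analyzed.

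The term $I$ is straightforward: because $\Sigma$ is a surface, $Ric_\Sigma = K_\Sigma g,$ giving contribution $-K_\Sigma e_{\bp}(u).$ For $II,$ I would work in a conformal normal coordinate $z = x+iy$ at a fixed point $p$ with $\sigma^2(p) = 1/2,$ so $\{\p_x,\p_y\}$ is orthonormal. Expanding $\p_x, \p_y$ in terms of $\p_z, \p_{\bar z}$ and substituting the formulas for $du(\p_x), du(\p_y), \bp u(\p_x), \bp u(\p_y)$ into the curvature expression (\ref{dbaruII}), I would use the K\"ahler property---namely that ${}^N\! R$ is a section of $\Lambda^{1,1}\otimes \Lambda^{1,1},$ so only index patterns of type $\alpha\bar\beta\gamma\bar\delta$ (and their conjugates) survive---to collapse the sum to the two quartic terms $q_1(u) + q_2(u)$ displayed in the text.

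The decisive step is then to recognize $q_1(u) = \sigma^{-4}\,{}^N\! R_{\alpha\bar\beta\gamma\bar\delta}\, w^\alpha_{\bar z}\bar w^\beta_z\, w^\gamma_z \bar w^\delta_{\bar z}$ as precisely the holomorphic bisectional curvature evaluated on the pair of type-$(1,0)$ vectors $w_{\bar z}$ and $w_z$ (viewed as tangent vectors to $N$ at $u(p)$), up to a positive factor. The nonnegative bisectional curvature hypothesis therefore forces $q_1(u) \ge 0,$ and with the sign it appears on the right-hand side (after moving to the form of (\ref{e''evolfirst})) it contributes nonpositively, so it can be dropped. The remaining term $q_2(u) = -\sigma^{-4}\,{}^N\! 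R_{\alpha\bar\beta\gamma\bar\delta}\, w^\alpha_{\bar z}\bar w^\beta_z w^\gamma_{\bar z}\bar w^\delta_z$ is quartic in $w_{\bar z}$ alone, hence bounded pointwise by $C_N |w_{\bar z}|^4 \sigma^{-4} = C_N e_{\bp}(u)^2$ with $C_N$ a multiple of $\sup|{}^N\! R|.$ Combining the three contributions yields (\ref{e:dbar-energy}); the main obstacle is bookkeeping in the expansion of $II$ so as to correctly identify the bisectional-curvature part $q_1$ versus the ``diagonal'' part $q_2,$ and to track signs when passing from (\ref{nablauajevolution}) to the heat inequality. The estimate (\ref{e:d-energy}) follows by applying the identical argument to the $(1,0)$ projection, using that the bisectional curvature on the pair $(w_z,w_{\bar z})$ is the same geometric quantity.
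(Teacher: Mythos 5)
Your outline follows the paper's proof exactly: project the parabolic Bochner identity (\ref{nablauajevolution}) onto the $(0,1)$ component using the K\"ahler hypothesis, pair with $\bp u,$ read off $\LA\bp u, I\RA = -K_\Sigma e_{\bp}(u),$ expand $\LA\bp u, II\RA$ in conformal/holomorphic coordinates into $q_1 + q_2,$ drop the bisectional-curvature piece by sign, and bound $q_2$ by $C_N e_{\bp}(u)^2.$ Your identifications of $q_1$ and $q_2$ match the paper's, and your treatment of $q_2$ (quartic purely in $w_{\bar z},$ so dominated by $C_N |w_{\bar z}|^4 \sigma^{-4} = C_N e_{\bp}(u)^2$) is correct.

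The one flaw is the sign you attach to $q_1.$ You assert that nonnegative holomorphic bisectional curvature forces $q_1(u) \ge 0,$ and then in the same sentence claim that it ``contributes nonpositively, so it can be dropped.'' These two statements contradict one another: in (\ref{e''evolfirst}) the term $\LA \bp u, II\RA = q_1 + q_2$ enters with a $+$ sign on the right of the heat-type equality, so dropping $q_1$ to produce the desired $\le$ requires $q_1 \le 0,$ not $q_1 \ge 0.$ The paper's claim is precisely that, under the curvature sign convention induced by the Bochner computation in (\ref{prebochner}), the condition of nonnegative holomorphic bisectional curvature is \emph{equivalent} to $q_1(u) \le 0.$ You have almost certainly identified the right geometric object (bisectional curvature evaluated on the $(1,0)$-vectors $w_{\bar z}$ and $w_z$) but have tracked the overall Riemann-tensor sign incorrectly, and your last clause tacitly corrects for this without saying so. In the write-up you should carry the sign convention of ${}^N\!R_{abcd}$ from (\ref{prebochner}) through the splitting explicitly and conclude $q_1 \le 0$; with that fixed the argument is complete and identical to the paper's.
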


\vspace{5mm}

\section{Epsilon-regularity}\label{sec:epsilonreg}

\subsection{Scalar heat inequality} 
In this subsection, we estimate a continuous 
weak solution $v(x,t)$ of the differential inequality
\begin{equation}\label{vequation}
\left( \frac{\p}{\p t} -\Delta_g \right) v \le A v^3 + B v
\end{equation}
on a ball inside the compact surface $\left( \Sigma, g \right).$\footnote{Since our results are local, the statements also apply to $\Sigma$ where $g$ has ``bounded geometry,'' i.e., uniform bounds on each derivative of the Riemann tensor and injectivity radius bounded from below.}
These estimates will lead 
to $\varepsilon$-regularity results of the form we require.

Let
$c_0 > 0$ denote a universal constant that may decrease with each subsequent appearance. The constant $R_0 > 0$ will depend on $B$ and the geometry of $\Sigma$ (and possibly on an integer $k \in \N$), and may also decrease.
We shall always assume
\begin{equation*}
0 < R < R_0,
\end{equation*}
where $R_0$ is small enough that any relevant scale-invariant Sobolev inequality on $\R^2$ also 
holds on $B_R(p)$ with a constant independent of $R$ and $p.$ We abbreviate $B_R = B_R(p),$ and for $0 < r < R,$ denote the annulus
$$U^R_r = B_R \setminus \bar{B}_r.$$
Let $\varphi \geq 0$ be a smooth cutoff function, with $\supp \varphi \subset B_R,$ $\varphi \equiv 1$ on $B_{R/2},$ and $\| \nabla \varphi \|_{L^\infty} < 4 / R;$
this may also be ensured by choosing $R_0$ sufficiently small.

\begin{lemma}[Struwe \cite{struwevariationalmethods}, Lemma III.6.7]\label{lemma:poincare} Given any $L^2_1$ function $v$ on $B_R,$ 
we have
\begin{equation}\label{poincare}
\int v^2 \varphi^2  \leq C \left( R^2 \, \int_{B_R} |\nabla v|^2 \varphi^2 + \int_{U^R_{R/2}} \!\! v^2 \, \right)
\end{equation}
\begin{equation}\label{gagliardonirenberg}
\int v^4 \varphi^2  \leq C \left( \int_{B_R} \!\! v^2 \, \right) \, \left( \int_{B_R} |\nabla v|^2 \varphi^2 + R^{-2} \int_{U^R_{R/2}} \!\! v^2 \, \right).
\end{equation}
\end{lemma}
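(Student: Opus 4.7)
The plan is to treat both bounds as standard weighted forms of the Poincar\'e and Gagliardo--Nirenberg (Ladyzhenskaya) inequalities on a two-dimensional disk. The crucial observation is that $\supp \varphi \subset B_R$ forces the product $v\varphi$ to lie in $H^1_0(B_R),$ so no boundary trace appears when these inequalities are applied to $v\varphi$ in place of $v.$ The weighted estimate on the right-hand side will then emerge from the expansion $\nabla(v\varphi) = \varphi\nabla v + v\nabla\varphi,$ with the error term controlled using $|\nabla\varphi| \leq 4/R$ on the annulus $U^R_{R/2}$ where $\nabla \varphi$ is supported.

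For \eqref{poincare}, I would first choose $R_0$ small enough that each geodesic ball $B_{R_0}(p) \subset \Sigma$ is covered by normal coordinates giving a uniform quasi-isometry to a Euclidean disk. The Euclidean Poincar\'e inequality on $H^1_0(B_R)$ then yields
\[ \int (v\varphi)^2 \leq CR^2 \int |\nabla(v\varphi)|^2 \]
with $C$ independent of $p$ and of $R < R_0.$ Bounding $|\nabla(v\varphi)|^2 \leq 2\varphi^2 |\nabla v|^2 + 2 v^2 |\nabla \varphi|^2$ and invoking the support and size of $\nabla\varphi$ immediately produces the stated estimate.

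For \eqref{gagliardonirenberg}, I would apply the 2D Ladyzhenskaya inequality
\[ \|w\|_{L^4}^4 \leq C \|w\|_{L^2}^2 \|\nabla w\|_{L^2}^2, \qquad w \in H^1_0(B_R), \]
to $w = v\varphi.$ The $L^2$ factor is controlled by $\|v\varphi\|_{L^2}^2 \leq \int_{B_R} v^2$ using $0 \leq \varphi \leq 1,$ while the gradient factor expands exactly as in the Poincar\'e step. The $L^4$ estimate on $v\varphi,$ combined with the bound $\varphi^2 \leq 1$ on the transition annulus (where the annular term on the right will absorb the remainder), yields the claimed inequality.

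The only delicate point is the uniformity of the Euclidean Poincar\'e and Ladyzhenskaya constants in $p$ and $R < R_0,$ which reduces by rescaling to a fixed unit disk as soon as $R_0$ is taken below the injectivity radius of $\Sigma;$ I do not anticipate any essential obstacle beyond this bookkeeping.
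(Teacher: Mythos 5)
Your handling of \eqref{poincare} is correct and coincides with what the paper does: Poincar\'e applied to $v\varphi \in H^1_0(B_R)$, expansion of $\nabla(v\varphi)$, and the support/size bound on $\nabla\varphi$. The uniformity remark on constants is also fine.

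However, the argument you give for \eqref{gagliardonirenberg} has a genuine gap. Applying the 2D Ladyzhenskaya inequality to $w = v\varphi$ gives
\begin{equation*}
\int v^4 \varphi^4 \;\leq\; C \left( \int v^2\varphi^2 \right) \left( \int |\nabla(v\varphi)|^2 \right),
\end{equation*}
that is, a bound on $\int v^4\varphi^4$, not on the target quantity $\int v^4\varphi^2$. Since $0 \leq \varphi \leq 1$, one has $\varphi^4 \leq \varphi^2$, so this is the \emph{wrong} direction: you have bounded a smaller quantity. Your remark that ``the annular term on the right will absorb the remainder'' does not repair this, because the remainder is
\begin{equation*}
\int v^4\varphi^2 - \int v^4\varphi^4 = \int_{U^R_{R/2}} v^4\,\varphi^2\,(1 - \varphi^2),
\end{equation*}
a fourth-power quantity supported on the transition annulus, and nothing on the right-hand side of \eqref{gagliardonirenberg} (which only involves $\int_{U^R_{R/2}} v^2$) controls $\int_{U^R_{R/2}} v^4$.

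The correct route---and the one implicit in the paper's citation to Struwe's Lemma III.6.7---is to apply the two-dimensional Sobolev inequality $\|w\|_{L^2} \leq C\|\nabla w\|_{L^1}$ to $w = v^2\varphi$ rather than Ladyzhenskaya to $v\varphi$. This gives
\begin{equation*}
\left( \int v^4\varphi^2 \right)^{1/2} \;\leq\; C \int |\nabla(v^2\varphi)| \;\leq\; 2C \int |v|\,\varphi\,|\nabla v| \;+\; C \int v^2 |\nabla\varphi|.
\end{equation*}
Cauchy--Schwarz on each term and squaring produces precisely
\begin{equation*}
\int v^4\varphi^2 \;\leq\; C \left( \int_{\supp\varphi} v^2 \right) \left( \int |\nabla v|^2 \varphi^2 + \int v^2 |\nabla\varphi|^2 \right),
\end{equation*}
with the $\varphi^2$ weight appearing naturally, after which the support and size of $\nabla\varphi$ finish the proof. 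The choice $w = v^2\varphi$ rather than $w = v\varphi$ is the essential point you are missing.
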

\begin{proof} 
The estimate (\ref{poincare}) follows from the Poincar{\'e} inequality and Young's inequality:
\begin{equation*}
\begin{split}
\int v^2 \varphi^2  & \leq C R^2 \int |\nabla \left( v \varphi \right)|^2 \\
& \leq C R^2 \int \left( | \nabla v |^2 \varphi^2 + 2 \varphi v \LA \nabla v , \nabla \varphi \RA + v^2 |\nabla \varphi|^2 \right) \\
& \leq C R^2 \int \left( | \nabla v |^2 \varphi^2 + v^2 |\nabla \varphi|^2 \right) \\
&  \leq C \left( R^2 \int_{B_R} \!\! |\nabla v|^2 + \int_{U^R_{R/2}} \!\! v^2 \, \right).
\end{split}
\end{equation*}
For (\ref{gagliardonirenberg}), we refer to Lemma III.6.7 of \cite{struwevariationalmethods}. The proof there gives an inequality
\begin{equation*}
\int v^4 \varphi^2 \leq C \left( \int_{\supp \varphi} \!\! v^2 \, \right) \int \left( |\nabla v|^2 \, \varphi^2 + v^2 \, |\nabla \varphi|^2 \right).
\end{equation*}
Since $\supp \nabla \varphi \subset U^R_{R/2},$ the estimate (\ref{gagliardonirenberg}) follows.
\end{proof}

\begin{prop}[Cf. \cite{Waldron2019}, Proposition 3.4]\label{prop:spending} Let $v$ be a nonnegative continuous weak solution of (\ref{vequation})
on $B_R \times \LB 0, T \right).$ 
Suppose that
\begin{equation*}
\int_{B_R} \!\! v(0)^2 < \frac{c_0}{A}, \qquad \sup_{0 \leq t < T} \int_{U^R_{R/2}} \!\! v(t)^2 \leq \varepsilon < \frac{c_0}{A}.
\end{equation*}
Then
\begin{equation}\label{spending:est}
\int_{B_{R/2}} \!\! v(t)^2 \leq  e^{-c t/R^2} \int_{B_R} \!\! v(0)^2 + C \varepsilon \left( 1 -  e^{-c t/R^2} \right)
\end{equation}
for $0 \leq t < T.$ Here $c, c_0 > 0$ are universal constants.
\end{prop}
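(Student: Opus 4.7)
The plan is to derive a scalar ODE for $y(t) := \int v^2 \varphi^2$ and integrate it. I would multiply (\ref{vequation}) by $v \varphi^2$ and integrate over $\Sigma$, producing
\[
\tfrac{1}{2} \tfrac{d}{dt} \int v^2 \varphi^2 + \int \varphi^2 |\nabla v|^2 \leq -2 \int v \varphi \LA \nabla v, \nabla \varphi \RA + A \int v^4 \varphi^2 + B \int v^2 \varphi^2.
\]
Absorbing the cross term via Young's inequality removes half of the gradient term at the cost of an error $\int v^2 |\nabla\varphi|^2$, which is supported on $U^R_{R/2}$ and thus controlled by $C\varepsilon / R^2$.

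Next I would handle the nonlinear term $A\int v^4 \varphi^2$ via the Gagliardo-Nirenberg-type estimate (\ref{gagliardonirenberg}). Under a provisional pointwise-in-time bound $\int_{B_R} v(t)^2 \leq 2\varepsilon_0$, this gives
\[
A \int v^4 \varphi^2 \leq 2 A C \varepsilon_0 \left( \int \varphi^2 |\nabla v|^2 + R^{-2} \int_{U^R_{R/2}} v^2 \right),
\]
so that for $\varepsilon_0$ sufficiently small the quartic contribution is absorbed, leaving a positive fraction of $\int \varphi^2 |\nabla v|^2$. The Poincar\'e-type inequality (\ref{poincare}) then converts this residual gradient term into a coercive term $\alpha_0 R^{-2} \int v^2 \varphi^2$, modulo another $O(\varepsilon/R^2)$ error on the annulus. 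Finally, choosing $R_0$ small enough that $B R_0^2 \leq \alpha_0 / 2$ absorbs the linear term $B \int v^2 \varphi^2$, producing the differential inequality
\[
\frac{dy}{dt} \leq -\frac{\alpha}{R^2}\, y(t) + \frac{C' \varepsilon}{R^2}.
\]
A Gr\"onwall-type integration, together with $y(0) \leq \int_{B_R} v(0)^2$ and $\int_{B_{R/2}} v^2 \leq y(t)$, yields (\ref{spending:est}).

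The main obstacle is that the Gagliardo-Nirenberg absorption requires the smallness $\int_{B_R} v(t)^2 \leq 2\varepsilon_0$ at \emph{all} times in $[0,T)$, whereas the hypothesis supplies this only at $t = 0$. I would resolve this with a standard continuity/bootstrap argument: let $T^\star \leq T$ be the supremum of times on which this bound holds. On $[0, T^\star)$ the ODE above is valid, giving $y(t) \leq \varepsilon_0 + C' \varepsilon$, and combined with $\int_{U^R_{R/2}} v^2 \leq \varepsilon$ this yields $\int_{B_R} v(t)^2 \leq \varepsilon_0 + (C'+1)\varepsilon$. For $\varepsilon_0$ and $\varepsilon$ chosen so that this remains strictly below $2\varepsilon_0$, continuity of $t \mapsto \int_{B_R} v(t)^2$ then forces $T^\star = T$, closing the argument.
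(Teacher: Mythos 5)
Your argument follows the same route as the paper's proof: multiply (\ref{vequation}) by $v\varphi^2$, integrate by parts, absorb the cross term via Young's inequality, control the quartic term with the Gagliardo--Nirenberg estimate (\ref{gagliardonirenberg}), obtain coercivity from the Poincar\'e bound (\ref{poincare}), absorb the linear term by shrinking $R_0$, and close by a continuity argument. The one point that needs repair is your bootstrap threshold. You use the open condition $\int_{B_R} v(t)^2 \leq 2\varepsilon_0$, and the estimate you obtain at a putative exit time is $\varepsilon_0 + (C'+1)\varepsilon$; this is strictly below $2\varepsilon_0$ only when $(C'+1)\varepsilon < \varepsilon_0$, which is not implied by the hypothesis $\varepsilon < \varepsilon_0$ since $C'$ is a Sobolev-type constant that cannot be assumed small. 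The paper sidesteps this by introducing a separate, larger threshold $\varepsilon_1 > \varepsilon_0$ for the open condition, chosen with $\varepsilon_1 \geq 2(C+2)\varepsilon_0$, so that $\varepsilon_0 + (C+1)\varepsilon \leq (C+2)\varepsilon_0 \leq \tfrac12\varepsilon_1$ closes the bootstrap with room to spare; one then shrinks $\varepsilon_0$ afterward so that $\varepsilon_1$ remains small enough for the quartic absorption and for the coercivity constant $\alpha$ to stay positive. Replacing your $2\varepsilon_0$ with $K\varepsilon_0$ for a suitably large constant $K$ (and shrinking $\varepsilon_0$ accordingly) fixes the issue; the rest of your argument is sound and matches the paper's.
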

\begin{proof} 

Let $c_1 > 0,$ to be determined during the proof, 
and assume that
\begin{equation*}
\eps < \frac{c_1}{A}.
\end{equation*}
Define $0 < T_1 \leq T$ to
be the maximal time such that
\begin{equation}\label{spending:assn}
\int_{B_R} v(t)^2 < \frac{c_1}{A} \quad \quad \left( \forall \,\,\,\, 0 \leq t < T_1 \right).
\end{equation}
Let $\varphi$ be as above. 
Multiplying (\ref{vequation}) by $\varphi^2 v$ and integrating by parts, we obtain
\begin{equation*}
\frac{1}{2} \frac{d}{dt} \left(\int \varphi^2 v^2\right) + \int \nabla(\varphi^2 v) \cdot \nabla v \leq A\int \varphi^2 v^{4} + B \int \varphi^2 v^2.
\end{equation*}
Applying Young's inequality, we obtain
\begin{equation*}
\begin{split}
\frac{1}{2} \frac{d}{dt} \left( \int \varphi^2 v^2 \right) + \frac{1}{2}\int \varphi^2 |\nabla v|^2  \leq 2 \int |\nabla \varphi |^2 v^2 + A \int \varphi^2 v^4 + B \int \varphi^2 v^2.
\end{split}
\end{equation*} 
For $0 \leq t < T_1,$ we apply (\ref{gagliardonirenberg}) on the RHS and rearrange, to obtain
\begin{equation*}
\frac{1}{2} \frac{d}{dt} \int \left(\varphi v\right)^2 + \left( \frac{1}{2} - C A \frac{c_1}{A} \right) \int \varphi^2 |\nabla v|^2 \leq C \left( 1 + A \frac{c_1}{A} \right) R^{-2} \varepsilon + B \int \left( \varphi v \right)^2.
\end{equation*}
Applying (\ref{poincare}) on the left-hand side, and rearranging again, we obtain
\begin{equation}\label{spending:diffineq}
\frac{d}{dt} \int \left(\varphi v\right)^2 + \frac{c }{R^2} \int (\varphi v)^2  \leq C \left(1 + c_1 \right) R^{-2} \varepsilon,
\end{equation}
where
$$c = \frac{1}{C} \left( \frac{1}{2} - C c_1 \right) - C B R^2.$$
We may assume
\begin{equation*}
c_1 < \frac{1}{4C}, \quad R_0 < \frac{1}{4CB},
\end{equation*}
so that $c > (8C)^{-1}.$

Rewrite (\ref{spending:diffineq}) as
$$\frac{d}{dt} \left( e^{c t / R^2} \int \left(\varphi v\right)^2\right) \leq C R^{-2} \varepsilon e^{c t / R^2}$$
and integrate in time, to obtain
\begin{equation}\label{spending:preest}
\begin{split}
\int_{B_{R/2}} v(t)^2 & \leq e^{- c t / R^2} \int_{B_R} v(0)^2 + C \varepsilon \left(1 - e^{- c t / R^2} \right),
\end{split}
\end{equation}
which holds 
for $0 \leq t < T_1.$

Assume, for the sake of contradiction, that $T_1 < T.$ For $t < T_1,$ (\ref{spending:preest}) implies
\begin{equation*}
\begin{split}
\int_{B_{R/2}} v^2 \leq \frac{c_0}{A} + C \varepsilon.
\end{split}
\end{equation*}
Provided that
$$2\left(2 + C\right) c_0 < c_1,$$
we have
\begin{equation}\label{spending:lastest}
\int_{B_R} v^2 = \int_{B_{R/2}} v^2 + \int_{U_{R/2}^{R}} v^2 \leq \frac{c_0}{A} + \left(1 + C\right) \varepsilon \leq \frac{(2 + C ) c_0}{A} \leq \frac{c_1}{2A}.
\end{equation}
Since $v$ is continuous for $t < T,$ the bound (\ref{spending:lastest}) persists at $t = T_1.$ This contradicts the maximality of $T_1$ subject to the open condition (\ref{spending:assn}); hence $T_1 = T,$ and (\ref{spending:est}) follows from (\ref{spending:preest}).
\end{proof}

\begin{prop}\label{prop:vepsilonreg}
Let $0< R < \min \LB R_0, \sqrt{T} \RB.$ 
Suppose that $v$ is a nonnegative continuous weak solution of (\ref{vequation}) on $B_R \times \LB 0, T \right),$ with
\begin{equation}\label{vepsilonreg:assn}
 \int_{B_R}\!\!  v(0)^2 + \sup_{0 \leq t < T} \int_{U^R_{R/2}} \!\! v(t)^2 \leq \varepsilon < \frac{c_0}{A}.
\end{equation}
Then
\begin{equation}\label{vepsilonreg:est}
\sup_{B_{R/2} \times \LB R^2 / 2 , T \right)} v(x,t) \le \frac{C \sqrt{\varepsilon}}{R}.
\end{equation}
\end{prop}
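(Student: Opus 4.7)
The plan is to combine Proposition \ref{prop:spending} with a parabolic Moser iteration. First, I would invoke Proposition \ref{prop:spending}: the hypothesis (\ref{vepsilonreg:assn}) supplies both $\int_{B_R} v(0)^2 < \varepsilon$ and $\sup_t \int_{U^R_{R/2}} v(t)^2 \leq \varepsilon,$ so by (\ref{spending:est}),
\begin{equation*}
\sup_{0 \leq t < T} \int_{B_{R/2}} v(t)^2 \leq C \varepsilon.
\end{equation*}
By the parabolic rescaling $v(x,t) \mapsto R \, v(Rx, R^2 t),$ which preserves $\int v^2$ and the constant $A$ while rescaling $B \mapsto BR^2 \leq BR_0^2$ (which we may take as small as needed by shrinking $R_0$), the problem reduces to the case $R = 1,$ $T \geq 1;$ the task becomes to show $\sup_{B_{1/2} \times [1/2, T)} v \leq C \sqrt{\varepsilon}.$

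Next I would bootstrap the propagated $L^2$ bound to an $L^\infty$ bound via parabolic Moser iteration. For each $p \geq 1,$ testing (\ref{vequation}) against $\varphi^2 v^{2p-1}$ with $\varphi$ a suitable cutoff and integrating by parts produces (after Young's inequality on the cross term) an estimate of the form
\begin{equation*}
\frac{1}{2p} \frac{d}{dt} \int \varphi^2 v^{2p} + \frac{c}{p} \int \varphi^2 |\nabla v^p|^2 \leq C \int v^{2p} |\nabla \varphi|^2 + A \int \varphi^2 v^{2p+2} + B \int \varphi^2 v^{2p}.
\end{equation*}
The critical term $A \int \varphi^2 v^{2p+2}$ is rewritten as $A \int (\varphi v^p)^2 \, v^2$ and controlled by Cauchy--Schwarz together with the Gagliardo--Nirenberg inequality (\ref{gagliardonirenberg}) applied to both $\varphi v^p$ and $v;$ the smallness of $\int v^2$ from the first step allows this contribution to be absorbed into the gradient term on the left-hand side. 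Iterating over a dyadic sequence of shrinking cutoffs, exponents $p_k = 2^k,$ and initial times $t_k \nearrow 1/2$ (to convert the integrated parabolic estimate into a supremum in time) then yields $\sup_{B_{1/2} \times [1/2, T)} v \leq C \sqrt{\varepsilon}.$ Undoing the scaling gives (\ref{vepsilonreg:est}).

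The main obstacle is the critical cubic nonlinearity $A v^3,$ which is scale-invariant in two dimensions and therefore sits at the borderline of parabolic regularity theory. The smallness of $\int v^2$ from the first step is essential at every iteration step in order to absorb the cubic contribution; without this smallness, the iteration would break down already at $p = 1,$ and indeed finite-time blow-up is not precluded, as is familiar from the theory of 2D harmonic map flow itself.
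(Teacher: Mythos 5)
Your first step (invoking Proposition \ref{prop:spending} to propagate the $L^2$ bound in time) and the parabolic rescaling to $R=1$ coincide exactly with the paper's argument. You diverge at the second step: the paper does not run a direct Moser iteration against the cubic nonlinearity. Instead, following \cite{Waldron2016} and ultimately Schoen--Uhlenbeck, it first establishes a crude pointwise bound by a point-picking/blowup argument --- set $e(r)=(1-r)\sup_{P_r}v$, pick the parabolic cylinder maximizing $e$, rescale about the maximizing spacetime point, and derive a contradiction from Moser's Harnack inequality if the maximum exceeds $1$ --- which yields $\sup_{P_r}v\leq (1-r)^{-1}$. With this in hand $Av^3\leq C(A+1)v$ on $P_{3/4}$, the equation becomes effectively linear, and a single application of Moser's Harnack gives $\sup_{P_{1/2}}v\leq C\sqrt{\varepsilon}$. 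Your route --- iterating directly and absorbing $A\int\varphi^2 v^{2p+2}$ at each exponent using smallness of the $L^2$ mass --- is also classical (it is essentially Struwe's \cite{Struwe1985} original approach to $\varepsilon$-regularity), but the absorption is more delicate than your sketch indicates: pointwise in time, bounding $\int(\varphi v^p)^2 v^2$ by Cauchy--Schwarz and (\ref{gagliardonirenberg}) applied to $v$ would require $\int|\nabla v(t)|^2$ to be controlled at each fixed time, which does not follow from $\sup_t\int v^2\leq C\varepsilon$ alone. To close the loop one must integrate in time first, use the $p=1$ Caccioppoli estimate to obtain $\iint|\nabla v|^2\leq C\varepsilon$, hence via spacetime Gagliardo--Nirenberg $\|v\|_{L^4_{t,x}}^2\leq C\varepsilon$, and then absorb $\iint(\varphi v^p)^2 v^2\leq\|\varphi v^p\|^2_{L^4_{t,x}}\|v\|^2_{L^4_{t,x}}\leq C\varepsilon\bigl(\sup_t\|\varphi v^p\|^2_{L^2}+\iint|\nabla(\varphi v^p)|^2\bigr)$ into the left-hand side, keeping track that the constants remain summable over $p$. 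The paper's blowup pre-step trades these spacetime manipulations for a clean reduction to the linear Harnack inequality; both routes reach the same estimate.
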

\begin{proof}  
By Proposition \ref{prop:spending}, the assumption (\ref{vepsilonreg:assn}) implies
\begin{equation}\label{vepsilonreg:rescassn}
 \int_{B_R} \!\! v(t)^2 \leq C \varepsilon
\end{equation}
for all $0 \leq t < T.$ 
This allows us to  repeat the argument of \cite[Proposition 2.3]{Waldron2016}, originally due to Schoen and Uhlenbeck \cite{SchoenUhlenbeck1982}.

Let $\tau \in \LB 0, T - R^2 \right)$ be arbitrary. Note that to establish (\ref{vepsilonreg:est}), it suffices to prove the estimate
\begin{equation}\label{vepsilonreg:reallydumbest}
\sup_{B_{R/2} \times \LB \tau + R^2 / 2 , \tau + R^2 \RB} v(x,t) \le \frac{C \sqrt{\varepsilon}}{R}.
\end{equation}
After rescaling
\begin{equation}\label{rescaling}
v(x,t) \to R \, v(Rx, R^2 t + \tau)
\end{equation}
in geodesic coordinates on $B_R,$ we may assume that $v$ is defined and continuous on $B_1 \times \LB 0 , 1 \RB.$ Then (\ref{vepsilonreg:rescassn}) is preserved, and (\ref{vequation}) becomes
\begin{equation}\label{vrescevol}
\begin{split}
\left( \frac{\partial}{ \partial t} - \Delta\right) v & \leq A v^{3} + R^2 B v \\
& \leq (A v^2 + 1) \cdot v,
\end{split}
\end{equation}
provided that $R_0 \leq 1/ \sqrt{B}.$
For $R < R_0,$ the rescaled metric (and any further rescaling) is close enough to Euclidean that we have uniform volume bounds and a uniform Sobolev constant, allowing us to apply Moser iteration with a uniform constant. 

Let $P_r(x, t) = B_r(x) \times \LB t- r^2, t \RB,$ and write $P_r = P_r(0,1).$ Define
\begin{equation}\label{rescenergy}
e(r) = \left(1 - r\right) \sup_{P_r} v,
\end{equation}
and let $e_0, r_0$ be such that
\begin{equation}\label{e0}
e_0 = e(r_0) = \sup_{0 \leq r \leq 1} e(r).
\end{equation}
Note that if $v \not \equiv 0$ then $r_0 < 1$ by definition. 
We may choose $(x_0, t_0) \in P_{r_0}$ such that
$$v(x_0, t_0) = \sup_{P_{r_0}} v = \frac{e_0}{1 - r_0}.$$ 
Now assume, for the sake of contradiction, that 
\begin{equation*}
e_0 > \frac{1}{\sqrt{A} }.
\end{equation*}
Let
$$\rho_1 := \frac{ 1 - r_0 }{2 e_0 \sqrt{ A }} < \frac{1 - r_0}{2}.$$
By definition, we have
\begin{equation}\label{supboundtorescale}
\rho_1 v(x_0, t_0) = \frac{1}{2 \sqrt{A}}. 
\end{equation}
Since $P_{\rho_1}(x_0, t_0) \subset P_{r_0 + \rho_1},$ we also have
\begin{equation*}
(1 - r_0 - \rho_1) \sup_{P_{\rho_1}(x_0, t_0)} v \leq e(r_0 + \rho_1) \leq e_0.
\end{equation*}
But notice that
$$e_0 \sqrt{A} \cdot \rho_1 = \frac{1-r_0}{2} \leq 1 - r_0 - \rho_1.$$
We therefore have
\begin{equation}\label{boundtorescale}
\rho_1 \sup_{P_{\rho_1}(x_0, t_0)} v \leq \frac{1}{\sqrt{A} }.
\end{equation}
Now define a function on $P_1$ by
$$v_1(x, t) =  \rho_1 v \left( \rho_1 x + x_0 , \rho_1^2 \left( t - 1 \right) + t_0 \right).$$
Then 
(\ref{supboundtorescale}-\ref{boundtorescale}) become 
$$v_1(0,0) = \frac{1}{2 \sqrt{A}}, \quad \sup_{P_1} v_1(x, t) \leq \frac{1}{\sqrt{A}},$$
and (\ref{vrescevol}) gives
\begin{equation}\label{vrescvolnew}
\begin{split}
\left( \frac{\partial}{ \partial t}  - \Delta \right) v_1 \leq 2 v_1.
\end{split}
\end{equation}
But then Moser's Harnack inequality (see e.g \cite[Lemma 2.2]{Waldron2016}), applied to (\ref{vrescvolnew}), gives
\begin{equation}
\begin{split}
\frac{1}{4A} = v_1(0,1)^2 & \leq C \int_0^1 \!\!\!\! \int_{B_1} v^2_1(x, t) \, dV dt \\
& \leq C \varepsilon \\
& \leq \frac{Cc_0}{A}.
\end{split}
\end{equation}
Provided that $Cc_0 < 1/4,$ this is a contradiction; therefore
$$e_0 \leq \frac{1}{\sqrt{A} }.$$
Now, from (\ref{rescenergy}-\ref{e0}), we have for any $0 < r < 1$
$$\sup_{P_r}v = \frac{e(r)}{1-r} \leq \frac{e_0}{1-r} \leq  \frac{1}{ \sqrt{A} ( 1-r ) }.$$
In particular, we have
$$\sup_{P_{3/4}} v \leq \frac{4}{\sqrt{A} }.$$
Then (\ref{vrescevol}) becomes
\begin{equation*}
\begin{split}
\left( \frac{\partial}{ \partial t}  - \Delta\right) v \leq 17 v
\end{split}
\end{equation*}
on $P_{3/4}.$ We may again apply Moser's Harnack inequality, to find
\begin{equation}\label{moserresult}
\begin{split}
\sup_{P_{1/2}} v & \leq C \left( \int_0^1 \!\!\!\! \int_{B_{3/4}} \! v^2 \, d V dt \right)^{1/2} \\
& \leq C \sqrt{\varepsilon}.
\end{split}
\end{equation}
The estimate (\ref{vepsilonreg:reallydumbest}) now follows from (\ref{moserresult}) by undoing the rescaling (\ref{rescaling}). Since $\tau$ was arbitrary, the desired estimate (\ref{vepsilonreg:est}) also follows.
\end{proof}

\vspace{5mm}

\subsection{Epsilon-regularity theorems}

The above estimates 
yield the following version of the standard $\varepsilon$-regularity theorem, as well as a split version in the K{\"a}hler case. 


\begin{thm}\label{thm:epsilonreg} Let $k \in \N.$ Given a compact Riemannian surface $\Sigma$ and a compact Riemannian manifold $N,$ there exist $\varepsilon_0 > 0,$ depending on the geometry of $N,$ and $R_0 > 0,$ depending on $k$ and the geometry of $\Sigma,$\footnote{Alternatively, one may take $R_0 = \inj_p(\Sigma) $ and let the constants in (\ref{epsilonreg:0thest}-\ref{epsilonreg:tensionfield}) depend on the geometry of $\Sigma.$}
 as follows.

Let $0 < R < \min \LB R_0, \sqrt{T} \RB$ and $0 \leq \tau \leq T - R^2.$
Suppose that $u : B_R \times \LB 0, T \right) \to N$ is a solution of (\ref{e:hm}) such that
\begin{equation}\label{epsilonreg:assn}
E(u(\tau), B_R) + \sup_{\tau \leq t < T} E \left( u(t), U^R_{R/2} \right) \leq \varepsilon < \varepsilon_0.
\end{equation}
Then
\begin{equation}\label{epsilonreg:0thest}
\sup_{B_{R/2} \times \LB \tau + R^2 / 2 , T \right)} |du| \le \frac{C  \sqrt{\varepsilon}}{R}.
\end{equation}
We further have
\begin{equation}\label{epsilonreg:kthest}
\sup_{B_{R/2} \times \LB \tau + R^2 / 2 , T \right)} | \nabla^{(k)} du | \le \frac{C_{N, k} \sqrt{\varepsilon}}{ R^{1 + k} }
\end{equation}
and
\begin{equation}\label{epsilonreg:tensionfield}
\sup_{B_{R/2} \times \LB \tau + R^2 / 2 , T \right)} | \nabla^{(k)} \mathcal{T}(u) | \le \frac{C_{N, k} }{R^{2 + k}} \sqrt{\int_\tau^T \!\!\!\!\! \int_{B_R} |\mathcal{T}(u)|^2 \, dV dt}.
\end{equation}
Here, we may take $\varepsilon_0 = \frac{c_0}{\sup_N |K_N|},$ where $K_N$ is the sectional curvature of $N$ and $c_0$ is the universal constant of Proposition \ref{prop:vepsilonreg}.
\end{thm}
\begin{proof} By applying Kato's inequality $|\nabla du| \leq |d |du||$ to (\ref{eweitz}), we obtain a differential inequality of the form (\ref{vequation}), with $v = |du|$ and $A = C_N.$ The estimate (\ref{epsilonreg:0thest}) follows directly from Proposition \ref{prop:vepsilonreg}. The derivative estimates (\ref{epsilonreg:kthest}) then follow by a standard bootstrapping argument applied to (\ref{nablauajevolution}).

The estimate (\ref{epsilonreg:tensionfield}) on the tension field can be obtained from the evolution equation 
\begin{equation*}
\begin{split}
\frac{\nabla}{\partial t}\mathcal{T}(u)&= tr_g \frac{\nabla}{\partial t} \n d u \\
& = tr_g \n \frac{\nabla }{\partial t} du + tr_g {}^N \! R \left( \frac{\partial u}{\partial t}, d u \right) du \\
& = \Delta \mathcal{T}(u) + tr_g {}^N \! R(\mathcal{T}(u), d u) d u,
\end{split}
\end{equation*}
by (\ref{evolofdu}).
Taking an inner product with $\mathcal{T}(u),$ we obtain
\begin{equation}\label{bochnerforT}
\begin{split}
\frac12 \left( \frac{\pd}{\pd t} - \Delta \right) |\mathcal{T}(u)|^2 & = -|\nabla \mathcal{T}(u)|^2 + {}^N \! R \# \mathcal{T}(u) \# \mathcal{T}(u) \# du \# du.
\end{split}
\end{equation}
By rescaling, we may assume without loss that $R = 1.$ Then we have $|du| \leq C \sqrt{\varepsilon} $ by (\ref{epsilonreg:0thest}), and (\ref{bochnerforT}) gives an inequality
\begin{equation*}
\begin{split}
\frac12 \left( \frac{\pd}{\pd t} - \Delta \right) |\mathcal{T}(u)|^2 & \leq C |\mathcal{T}(u)|^2.
\end{split}
\end{equation*}
The estimate (\ref{epsilonreg:tensionfield}) for $k = 0$ now also follows from Moser's Harnack inequality, and the higher derivative estimates are obtained by bootstrapping.
\end{proof}

\begin{thm}\label{thm:e''bound} 
Supposing that $N$ is compact K{\"a}hler with nonnegative holomorphic bisectional curvature, there exists $\delta_0 > 0$ as follows.
 
Let $0 < R < \min \LB R_0, \sqrt{T} \RB$ and $0 \leq \tau \leq T - R^2.$
Suppose that $u : B_R \times \LB 0, T \right) \to N$ is a solution of (\ref{e:hm}) such that
\begin{equation*}
E_{\bp} (u(\tau), B_R) + \sup_{\tau \leq t < T} E_{\bp } \left( u(t), U^R_{R/2} \right) \leq \delta < \delta_0.
\end{equation*}
Then
\begin{equation*}
\sup_{B_{R/2} \times \LB \tau + R^2 / 2 , T \right)} |\bp u| \le \frac{C \sqrt{\delta}}{R}.
\end{equation*}
A similar result holds for $\partial u,$ with $E_{\p}$ in place of $E_{\bp}.$

Here, we may take $\delta_0 = \frac{c_0}{\sup_N |H_N|},$ where $H_N$ is the holomorphic sectional curvature of $N$ and $c_0$ is the universal constant of Proposition \ref{prop:vepsilonreg}.
\end{thm}
\begin{proof}
By applying Kato's inequality to (\ref{e:dbar-energy}), we obtain an inequality of the form (\ref{vequation}), with $v = |\bp u|$ and $A = C_N'.$ The result then follows from Proposition \ref{prop:vepsilonreg}. 
\end{proof}

\vspace{5mm}

\section{Estimate on the energy scale}\label{sec:curvaturescale}

Let $q \geq 1.$ We now show that a uniform $L^q$ bound on the stress-energy tensor
\begin{equation}\label{stressbound}
\sup_{\tau \leq t < T} \| S(u(t)) \|_{L^q} \leq \sigma 
\end{equation}
yields control over the energy scale along the flow (see Definition \ref{defn:energyscale} below), 
based on the following lemma. Here, as in the previous section, $R_0 > 0$ will be a constant depending on the geometry of $\Sigma.$ 


\begin{lemma}\label{lemma:mainenergyest} Given $p\in \Sigma$ and $0 < R < R_0,$ write $B_R = B_R(p).$ 
Assuming the stress-energy bound (\ref{stressbound}),
for $\tau \leq t_1 \leq t_2 < T,$ we have
\begin{equation*}
E\left( u(t_2), B_{R/2} \right) \leq E\left( u(t_1), B_R\right) + C \sigma \frac{t_2 - t_1 }{R^{\frac{2}{q}}}. 
\end{equation*}
\end{lemma}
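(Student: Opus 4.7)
The plan is to integrate the pointwise energy identity \eqref{pointwiseenergy} against the square of a spatial cutoff function $\varphi \geq 0$ supported in $B_R$ with $\varphi \equiv 1$ on $B_{R/2}$ and $|\nabla^{(k)} \varphi| \leq C R^{-k}$ for $k = 1, 2$. Multiplying \eqref{pointwiseenergy} by $\varphi^2$ and integrating over $\Sigma$, the tension term has a favorable sign and can be discarded, leaving
\begin{equation*}
\frac{d}{dt} \int_\Sigma \varphi^2 \, e(u) \, dV_g \leq \int_\Sigma \varphi^2 \, \div(\div \, S) \, dV_g.
\end{equation*}

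Next I would integrate by parts twice on the right-hand side. Since $\varphi$ is compactly supported in $B_R$, the boundary terms vanish and we obtain
\begin{equation*}
\int_\Sigma \varphi^2 \, \div(\div \, S) \, dV_g = \int_\Sigma \nabla^2(\varphi^2) : S \, dV_g,
\end{equation*}
where $\nabla^2(\varphi^2)$ denotes the Hessian of $\varphi^2$. The key point is that $\nabla^2(\varphi^2)$ is supported in $B_R$ and satisfies $|\nabla^2(\varphi^2)| \leq C/R^2$ pointwise. Applying H\"older's inequality with conjugate exponents $q$ and $q' = q/(q-1)$, together with the volume bound $|B_R| \leq C R^2$, yields
\begin{equation*}
\left| \int_\Sigma \nabla^2(\varphi^2) : S \, dV_g \right| \leq \|\nabla^2(\varphi^2)\|_{L^{q'}} \|S(u(t))\|_{L^q} \leq C R^{-2 + 2/q'} \sigma = C \sigma R^{-2/q}.
\end{equation*}

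Integrating the resulting differential inequality from $t_1$ to $t_2$ and using $\varphi^2 \leq \mathbbm{1}_{B_R}$ and $\varphi^2 \geq \mathbbm{1}_{B_{R/2}}$ gives the claimed estimate
\begin{equation*}
E(u(t_2), B_{R/2}) \leq \int \varphi^2 e(u(t_2)) \leq \int \varphi^2 e(u(t_1)) + C\sigma (t_2 - t_1) R^{-2/q} \leq E(u(t_1), B_R) + C \sigma \frac{t_2 - t_1}{R^{2/q}}.
\end{equation*}
I do not anticipate any serious obstacle: the argument is a straightforward ``local energy inequality'' of the kind standard in parabolic theory. The one subtle point is that the improvement over the classical $L^1$ version (which would give $R^{-2}$) comes entirely from the H\"older trade-off between $\|\nabla^2(\varphi^2)\|_{L^{q'}}$ and $\|S\|_{L^q}$, and the scaling works out precisely because the cutoff occupies a ball of radius $R$. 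One should check that the solution is regular enough on $B_R$ for the integration by parts to be valid, but this follows from the classical-solution assumption announced in Remark~2.1.
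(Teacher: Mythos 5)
Your proposal is correct and follows exactly the paper's route: integrate the pointwise energy identity \eqref{pointwiseenergy} against a cutoff (the paper uses $\varphi$ rather than $\varphi^2$, which is immaterial), drop the nonnegative $|\mathcal{T}(u)|^2$ term, integrate by parts twice to move derivatives onto the cutoff, and apply H\"older with exponents $q, q'$ together with $|\nabla^{(2)}\varphi|\le C/R^2$ and $|B_R|\le CR^2$ to get $\|\nabla^{(2)}\varphi\|_{L^{q'}}\le CR^{-2/q}$. (The paper's $L^{\frac{q-1}{q}}$ is a typo for $L^{\frac{q}{q-1}}$, as its own computation shows.)
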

\begin{proof} Choose a cutoff $\varphi$ for $B_{R/2} \subset B_R.$ Integrating by parts against the pointwise energy identity (\ref{pointwiseenergy}), we have
\begin{equation}\label{mainenergyest:1}
\begin{split}
\int e(t_2) \varphi \, dV - \int e(t_1) \varphi \, dV + \int_{t_1}^{t_2} \!\!\!\! \int |\T|^2 \varphi & \, dV dt \\
& = \int_{t_1}^{t_2} \!\!\!\! \int \nabla^i \nabla^j \varphi S_{ij} \, dV dt \\
& \leq C (t_2 - t_1 ) \| \nabla^{(2)} \varphi \|_{L^{\frac{q}{q-1} }} \sup_{t_1 \leq t \leq t_2} \|S(t)\|_{L^q},
\end{split}
\end{equation}
where we have applied H\"older's inequality. We have
\begin{equation*}
\| \nabla^{(2)} \varphi \|_{L^{\frac{q}{q-1} }} \leq \LB \int_{B_R} \left( \frac{C}{R^2} \right)^{\frac{q}{q-1}} \, dV \RB^{\frac{q-1}{q}} \leq C R^{ \left( 2 - \frac{2q}{q-1} \right) \frac{q-1}{q} } = C R^{\frac{-2}{q}}.
\end{equation*}
Then (\ref{mainenergyest:1}) yields
\begin{equation}
E\left( u(t_2), B_{R/2} \right) - E\left( u(t_1), B_R\right) \leq C (t_2 - t_1 ) R^{\frac{-2}{q}} \sigma
\end{equation}
as desired.
\end{proof}

\begin{defn}\label{defn:energyscale}
Let $p\in \Sigma,$ $0 < \rho < R_0,$ and $\varepsilon > 0.$ Given an $L^2_1$ map $u: B_{\rho}(p) \to N,$ define the \emph{(outer) energy scale} $\ld_{\varepsilon, \rho, p}(u)$ to be the minimal number $\ld \in [0,\rho]$ such that
\begin{equation}\label{lambdadef}
\sup_{\ld<r<\rho} E\(u,U_{r/2}^{r}(p)\) < \varepsilon.
\end{equation}
Notice that if no $\lambda < \rho$ satisfies (\ref{lambdadef}), then $\lambda = \rho$ does so vacuously.
\end{defn}

\begin{lemma}\label{lemma:lambdazero} 
Suppose that $0 < \eps < \eps_0.$ 
For a solution $u: \Sigma \times \LB 0, T \right) \to N$ of (\ref{e:hm}), 
$du$ is bounded near $(p,T)$ if and only if there exists $\rho > 0$ such that $\lambda_{\varepsilon, \rho, p}(u(t)) = 0$ for all $T - \rho^2 \leq t < T.$  
\end{lemma}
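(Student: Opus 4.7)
The argument will split into two directions, with the reverse implication being the substantive one.

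For the forward direction, assuming $|du| \leq C$ on $B_{\rho_0}(p) \times \LB T - \rho_0^2, T \right)$, the plan is a routine volume estimate: every annulus $U^r_{r/2}(p)$ with $r \leq \rho_0$ has area $\leq C' r^2$, so its energy at any time $t$ in the range is at most $\tfrac{1}{2} C^2 C' r^2$. Shrinking $\rho \leq \rho_0$ small relative to $\sqrt{\varepsilon/(C^2 C')}$ then yields $\lambda_{\varepsilon,\rho,p}(u(t)) = 0$ for all $t \in \LB T - \rho^2, T \right)$.

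For the reverse direction, the plan is to invoke the $\varepsilon$-regularity theorem (Theorem~\ref{thm:epsilonreg}) at the initial time $t_0 := T - \rho^2 < T$ of the given interval, where, crucially, $u(t_0)$ is still smooth because $u$ is a classical solution on $\LB 0, T \right)$. The hypothesis directly supplies $\sup_{t_0 \leq t < T} E(u(t), U^R_{R/2}(p)) \leq \varepsilon$ for any $R \in (0, \rho)$, but not the full-ball energy $E(u(t_0), B_R(p))$. This last quantity we recover from classical regularity: since $u(t_0) \in C^\infty$, the pointwise boundedness of $|du(t_0)|$ near $p$ gives $E(u(t_0), B_R(p)) \to 0$ as $R \to 0$.

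I would then choose $R < \min(\rho, R_0, \sqrt{T})$ small enough that $E(u(t_0), B_R(p)) < \varepsilon_0 - \varepsilon$, so that the hypothesis
\[
E(u(t_0), B_R(p)) + \sup_{t_0 \leq t < T} E(u(t), U^R_{R/2}(p)) < \varepsilon_0
\]
of Theorem~\ref{thm:epsilonreg} (with $\tau = t_0$) is met. Its conclusion furnishes a bound $|du| \leq C\sqrt{\varepsilon_0}/R$ on $B_{R/2}(p) \times \LB t_0 + R^2/2, T \right)$, which is a parabolic neighborhood of $(p,T)$ since $R < \rho$ forces $t_0 + R^2/2 < T$. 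A minor bookkeeping point is that $\rho \leq \sqrt{T}$ is needed for $t_0 \geq 0$, harmless since one may shrink $\rho$ at the outset. The only real substance in the argument is the observation that classical regularity at $t < T$ is precisely what lets one bypass the lack of direct control on $E(u(t), B_R(p))$ from the hypothesis; no other obstacle is anticipated.
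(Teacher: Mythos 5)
Your proposal is correct and follows essentially the same route as the paper: the forward direction is immediate from the definition, and the reverse direction applies Theorem~\ref{thm:epsilonreg} at $\tau = T - \rho^2$, using smoothness of $u(T-\rho^2)$ to make $E(u(T-\rho^2), B_R(p))$ small for small $R$ (the paper phrases this as absolute continuity of the integral of $|du(T-\rho^2)|^2$, which is the same point) and the hypothesis $\lambda_{\varepsilon,\rho,p}(u(t)) = 0$ to control the annular energy. Your bookkeeping on $R < \rho \leq \sqrt{T}$ and the choice $E(u(t_0),B_R) < \varepsilon_0 - \varepsilon$ are fine refinements but do not constitute a different argument.
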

\begin{proof} 
If $du$ is locally bounded, then it is clear from the definition that $\lambda_{\varepsilon, \rho, p}(u(t)) = 0$ for $\rho$ sufficienctly small. Conversely, suppose that $\ld_{\varepsilon, \rho, p}(u(t)) = 0$ for some $\rho > 0$ and all $T - \rho^2 \leq t < T.$ Then, since $du(T - \rho^2)$ is square-integrable, we may choose $R > 0$ such that
$$E(u(T - \rho^2), B_R) \leq \varepsilon.$$
But, by assumption, we also have
$$\sup_{T - \rho^2 \leq t < T} E \left( u(t), U^R_{R/2} \right) < \varepsilon.$$
Hence (\ref{epsilonreg:assn}) is satisfied, and Theorem \ref{thm:epsilonreg} implies that the energy density is bounded near $p$ as $t \nearrow T.$
\end{proof}

\begin{thm}\label{thm:lambdaest} Let $u$ be as above, and assume the stress-energy bound (\ref{stressbound}). 
Suppose that $0 < R \leq \rho < R_0$ and $\tau \leq t_1 \leq  t_2 < T$
are such that
\begin{equation}\label{lambdaest:mainassn}
\sup_{t_1 \leq t \leq t_2} E \left( u(t), B_{\rho}(p) \right) \leq E \left( u(t_2), B_{R}(p) \right) + \frac{\varepsilon}{2}.
\end{equation}
Then for $t_1 \leq t \leq t_2,$ we have
\begin{equation}\label{lambdaest:mainest}
\lambda_{\varepsilon, \rho, p }(u(t)) \leq \max \LB 4 R, \left( \frac{ C\sigma (t_2 - t) }{ \varepsilon } \right)^{\frac{q}{2}} \RB.
\end{equation}
\end{thm}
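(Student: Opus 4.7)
\textbf{Proof plan for Theorem \ref{thm:lambdaest}.}
The plan is to show that for every $r$ with
$$\max\LB 4R,\, \bigl( C\sigma(t_2-t)/\varepsilon\bigr)^{q/2} \RB < r < \rho,$$
the annular energy $E(u(t), U^{r}_{r/2}(p))$ is less than $\varepsilon$, which by the definition of the outer energy scale will force the asserted bound on $\lambda_{\varepsilon,\rho,p}(u(t))$. (If $M:=\max\{\ldots\}\ge\rho$, then trivially $\lambda\le\rho\le M$.) The idea is to combine two pieces of information: the hypothesis (\ref{lambdaest:mainassn}), which controls $E(u(t),B_\rho)$ in terms of $E(u(t_2),B_R)$, and Lemma \ref{lemma:mainenergyest}, which is the only place the stress-energy bound (\ref{stressbound}) enters.

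Fix $t \in [t_1, t_2]$ and $r$ satisfying $4R < r < \rho$. First I would observe that the condition $r > 4R$ implies $B_R \subset B_{r/4}\subset B_{r/2}$, so trivially
\begin{equation*}
E(u(t_2), B_R) \le E(u(t_2), B_{r/4}).
\end{equation*}
Next, apply Lemma \ref{lemma:mainenergyest} on the ball $B_{r/2}(p)$ (note $r/2<\rho<R_0$) with initial time $t$ and final time $t_2$ to obtain
\begin{equation*}
E(u(t_2), B_{r/4}) \;\le\; E(u(t), B_{r/2}) + \frac{C\sigma(t_2-t)}{(r/2)^{2/q}}.
\end{equation*}
Combining the two displays gives an upper bound on $E(u(t_2),B_R)$ in terms of $E(u(t), B_{r/2})$ plus a small error controlled by $\sigma(t_2-t)/r^{2/q}$.

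Now plug this into the hypothesis (\ref{lambdaest:mainassn}): since $B_r\subset B_\rho$,
\begin{equation*}
E(u(t),B_r) \le E(u(t),B_\rho) \le E(u(t_2),B_R)+\tfrac{\varepsilon}{2}
 \le E(u(t),B_{r/2}) + \frac{C'\sigma(t_2-t)}{r^{2/q}} + \tfrac{\varepsilon}{2}.
\end{equation*}
Subtracting $E(u(t),B_{r/2})$ from both sides yields
\begin{equation*}
E(u(t), U^{r}_{r/2}(p)) \;\le\; \frac{C'\sigma(t_2-t)}{r^{2/q}} + \frac{\varepsilon}{2}.
\end{equation*}
If, in addition, $r > (2C'\sigma(t_2-t)/\varepsilon)^{q/2}$, then the right-hand side is strictly less than $\varepsilon$, which is the annular bound needed.

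The estimate (\ref{lambdaest:mainest}) then follows directly from Definition \ref{defn:energyscale}. The only step that requires care is the simultaneous use of the two-sided inclusion $B_R\subset B_{r/4}\subset B_{r/2}\subset B_\rho$ (coming from $4R<r\le\rho$) to bridge between the reference ball $B_R$ at time $t_2$ appearing in the hypothesis and the annulus $U^r_{r/2}$ at time $t$. No obstacle of substance arises: Lemma \ref{lemma:mainenergyest} is exactly tailored to absorb the time increment $t_2-t$ against a factor $r^{-2/q}$, which is what produces the $(t_2-t)^{q/2}$ scaling of $\lambda$.
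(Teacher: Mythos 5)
Your proof is correct and is essentially the same argument as the paper's, just organized in the contrapositive: the paper assumes $\lambda(t)$ violates the bound and derives $E(u(t_2),B_R)<E(u(t_2),B_R)$ via Lemma \ref{lemma:mainenergyest} and the nesting $B_R\subset B_{\lambda/4}$, whereas you show directly that $E(u(t),U^r_{r/2})<\varepsilon$ for all admissible $r$ beyond the threshold. Both hinge on exactly the same two ingredients (hypothesis (\ref{lambdaest:mainassn}) and Lemma \ref{lemma:mainenergyest} applied on $B_{r/2}$ from $t$ to $t_2$), and the direct version has the minor advantage of not needing to extract $E(u(t),U^\lambda_{\lambda/2})\ge\varepsilon$ from the minimality in Definition \ref{defn:energyscale}.
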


\begin{center}
\includegraphics[scale=.4]{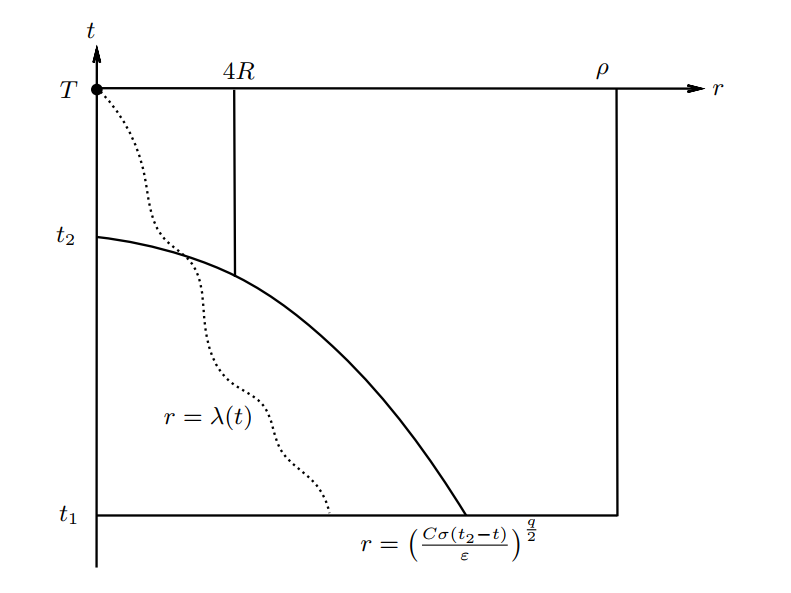}
\end{center}
 
\begin{proof} Suppose that (\ref{lambdaest:mainassn}) holds but (\ref{lambdaest:mainest}) fails, i.e., there exists $t \in \LB t_1 , t_2 \RB$ such that $\lambda = \lambda_{\varepsilon, \rho, p } \left( u(t) \right) \geq 4R$ satisfies
\begin{equation}\label{lambdaest:1}
\lambda^{\frac{2}{q}} > \frac{ 2 C(t_2 - t) \sigma }{\varepsilon}.
\end{equation}
Then, by choice of $\lambda,$ we must have
\begin{equation}\label{etulambdaleps}
E\(u(t),U_{\lambda/2}^{\lambda}\) \geq \varepsilon.
\end{equation}
Now (\ref{lambdaest:mainassn}) and (\ref{etulambdaleps}) imply
\begin{equation*}
\begin{split}
E(u(t), B_{\lambda/2}) & \leq E \left( u(t), B_{\rho} \right) - E \left(u(t), U^{\lambda}_{\lambda/2} \right) \\
& \leq E \left( u(t_2), B_R \right) + \frac{\varepsilon}{2} - \varepsilon \\
& \leq E \left( u(t_2), B_R \right) - \frac{\varepsilon}{2}.
\end{split}
\end{equation*}
But then Lemma \ref{lemma:mainenergyest} and (\ref{lambdaest:1}) give
\begin{equation*}
\begin{split}
E( u(t_2), B_R) \leq E \left( u(t_2), B_{\lambda / 4} \right) & \leq E(u(t), B_{\lambda/2} ) +C \sigma \frac{t_2 - t }{ \lambda^{\frac{2}{q}} } \\
& < E \left( u(t_2), B_R \right) - \frac{\varepsilon}{2} + \frac{\varepsilon}{2} = E\left( u(t_2), B_R \right),
\end{split}
\end{equation*}
which is a contradiction. This establishes the desired estimate (\ref{lambdaest:mainest}).
\end{proof}

\begin{cor}\label{cor:lambdatozero} Suppose that the stress-energy bound (\ref{stressbound}) holds for some $q \geq 1,$ and assume $T < \infty.$ Given any $p \in \Sigma$ and $\eps > 0,$ for $\rho > 0$ sufficiently small, we have
\begin{equation}\label{lambdaest:est}
\lambda_{\varepsilon, \rho, p}(u(t)) = O(T - t)^{\frac{q}{2}} \quad (t \nearrow T).
\end{equation}
\end{cor}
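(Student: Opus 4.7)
The plan is to apply Theorem~\ref{thm:lambdaest} with $t_1 = t$ and the inner radius $R = R(t_2)$ chosen as the concentration scale of $u(t_2)$ near $p$, and then send $t_2 \nearrow T$. First I set
$$E_0 := \lim_{\rho \searrow 0} \limsup_{t \nearrow T} E(u(t), B_\rho(p)),$$
which is a well-defined non-negative quantity measuring how much energy concentrates at $p$. The case $E_0 \leq \varepsilon/2$ is trivial: for $\rho$ sufficiently small and $t$ sufficiently close to $T$ one has $E(u(t), B_\rho(p)) < \varepsilon$, which forces $\lambda_{\varepsilon, \rho, p}(u(t)) = 0$. Hence I may assume $E_0 > \varepsilon/2$.

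The main preliminary step is to establish the uniform lower bound
$$\liminf_{t \nearrow T} E(u(t), B_r(p)) \geq E_0 \qquad \text{for every } r > 0.$$
This follows from Lemma~\ref{lemma:mainenergyest}: sending $t_2 \nearrow T$ first and then $t_1 \nearrow T$, and using that $(T - t_1)/r^{2/q} \to 0$, one obtains
$$\limsup_{t \nearrow T} E(u(t), B_{r/2}(p)) \leq \liminf_{t \nearrow T} E(u(t), B_r(p)),$$
whence the claim follows from the monotonicity of $\limsup_{t \nearrow T} E(u(t), B_\rho)$ in $\rho$. With this in hand, fix $\rho \in (0, R_0)$ so that $\limsup_{t \nearrow T} E(u(t), B_\rho) \leq E_0 + \varepsilon/16$, and choose $\tau < T$ such that $|E(u(t), B_\rho) - E_0| < \varepsilon/8$ for $t \in [\tau, T)$. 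For $t_2 \in [\tau, T)$, define
$$R(t_2) := \inf \{ R \in (0, \rho] : E(u(t_2), B_R(p)) \geq E_0 - \varepsilon/4 \},$$
which is well-defined and positive (using $E_0 > \varepsilon/2$) by continuity of $E(u(t_2), B_R)$ in $R$. The uniform lower bound above forces $R(t_2) \searrow 0$ as $t_2 \nearrow T$.

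For any $t \in [\tau, T)$ and any $t_2 \in (t, T)$, hypothesis~(\ref{lambdaest:mainassn}) of Theorem~\ref{thm:lambdaest} is satisfied with $R = R(t_2)$ and $t_1 = t$:
$$\sup_{s \in [t, t_2]} E(u(s), B_\rho) \leq E_0 + \varepsilon/8 \leq E(u(t_2), B_{R(t_2)}) + \varepsilon/2.$$
The theorem then yields
$$\lambda_{\varepsilon, \rho, p}(u(t)) \leq \max\!\left\{ 4 R(t_2),\ \left( \frac{C \sigma (t_2 - t)}{\varepsilon} \right)^{q/2} \right\}.$$
Passing $t_2 \nearrow T$, the first argument of the maximum vanishes while the second tends to $(C \sigma (T - t)/\varepsilon)^{q/2}$, giving the claimed rate.

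The main obstacle is the lower-semicontinuity inequality for local energy, which ensures $R(t_2) \to 0$ and thereby allows the $4R$ term to be dropped. A pleasant feature of the scheme is that no \emph{quantitative} rate of decay for $R(t_2)$ is required: with $t$ held fixed and $t_2$ free to approach $T$, qualitative convergence is already enough.
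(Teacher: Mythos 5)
Your proof is correct and follows essentially the same route as the paper: define $E_0$, fix $\rho$ so the local energy on $B_\rho$ stays near $E_0$ for $t$ close to $T$, verify hypothesis~(\ref{lambdaest:mainassn}) of Theorem~\ref{thm:lambdaest} with an arbitrarily small inner radius $R$, and then send $R \to 0$ to drop the $4R$ term. The paper achieves the middle step a little more economically, observing only that $\limsup_{t\to T} E(u(t), B_R(p)) \geq E_0$ for every $R > 0$ (so for each $R$ one can simply pick a suitable $t_2$ near $T$), whereas you extract from Lemma~\ref{lemma:mainenergyest} the sharper lower-semicontinuity $\liminf_{t\to T} E(u(t), B_r(p)) \geq E_0$ and use it to show your concentration radius $R(t_2)$ tends to zero; both steps are sound, and the lower-semicontinuity observation (which also forces the case split on $E_0 \lessgtr \varepsilon/2$) is a nice by-product, but it is more machinery than the corollary strictly requires.
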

\begin{proof} 
Define
\begin{equation*}
E_0 = \lim_{r \searrow 0} \limsup_{t \nearrow T} E(u(t), B_r) \leq E(u(0)) < \infty. 
\end{equation*}
Here, the outer limit must exist because the inner $\limsup$ is nonnegative and decreasing with respect to $r.$ 
In particular, for $0 < \rho < R_0$ sufficiently small, 
there exists $t_1 < T$ such that
\begin{equation}\label{eoepso41}
\sup_{t_1 \leq t < T} E(u(t), B_\rho ) \leq E_0 + \frac{\varepsilon}{4}.
\end{equation}
On the other hand, given 
any $R > 0,$ we may also choose $t_2 \in \LB t_1, T \right)$ such that
\begin{equation}\label{eoepso42}
E(u(t_2), B_R) \geq E_0 - \frac{\varepsilon}{4}.
\end{equation}
Combining (\ref{eoepso41}) and (\ref{eoepso42}), we obtain
\begin{equation*}
\sup_{t_1 \leq t < T} E(u(t), B_\rho ) \leq E(u(t_2), B_R) + \frac{\varepsilon}{2},
\end{equation*}
which implies (\ref{lambdaest:mainassn}). Theorem \ref{thm:lambdaest} now yields a bound of the form (\ref{lambdaest:mainest}); since $R > 0$ was arbitrary, this implies (\ref{lambdaest:est}).
\end{proof}

\begin{cor}\label{cor:nnhbscblowuprate} Suppose that $N$ is compact K\"ahler with nonnegative holomorphic bisectional curvature. Assume that $p \in \Sigma,$ $0 \leq \tau < T,$ and $0 < \rho_1 < \min \LB R_0, \sqrt{T - \tau} \RB$ are such that 
\begin{equation}\label{nnhbscblowuprate:assn}
\sup_{\tau \leq t < T} E_{\bp}(u(t), B_{\rho_1}(p) ) \leq \delta < \delta_0,
\end{equation}
where $\delta_0$ is the constant of Theorem \ref{thm:e''bound}.
Then, for each $\tau + \frac{\rho_1^2}{2} \leq t < T,$ we have
\begin{equation}\label{nnhbscblowuprate:Sest}
\begin{split}
\norm{S(t)}_{L^2 ( B_{ \frac{\rho_1}{2} }(p) ) } & \leq \frac{C \sqrt{\delta E_\p(u(\tau))} }{\rho_1}.
\end{split}
\end{equation}
In particular, for each $\varepsilon > 0,$ 
there exists $\rho > 0$ such that
\begin{equation}\label{nnhbscblowuprate:Oest}
\lambda_{\varepsilon, \rho, p}(u(t)) = O(T - t) \quad (t \nearrow T).
\end{equation}
The same result holds after reversing the roles of $E_\dbar$ by $E_\p.$
\end{cor}
\begin{proof} From Lemma \ref{lemma:stress-tensor} and Theorem \ref{thm:e''bound}, we have the pointwise bound
$$|S(u(t))|^2 \leq C e_{\bp}(t) e_{\p}(t)  \leq \frac{ C\delta e_{\p}(t)  }{\rho^2_1} $$
on $B_{ \frac{\rho_1}{2} } \times \LB \tau + \frac{\rho_1^2}{2} , T \right).$ Integrating, we have
\begin{equation*}
\begin{split}
\norm{S(u(t))}_{L^2\left( B_{ \frac{\rho_1}{2} } \right)}^2 & \leq \frac{ C \delta E_{\p}(u(t)) }{\rho^2_1} \leq \frac{ C \delta E_{\p}(u(0)) }{\rho^2_1},
\end{split}
\end{equation*}
since $E_{\p}(u(t))$ is decreasing, which is (\ref{nnhbscblowuprate:Sest}). 

The estimate (\ref{nnhbscblowuprate:Oest}) now follows from the previous corollary, with $q = 2.$
\end{proof}

\begin{rmk}\label{rmk:rotsymcase}
Angenent, Hulshof, and Matano \cite{AHM2009} first established the bound (\ref{nnhbscblowuprate:Oest}) in the degree 1 rotationally symmetric case (with $N = S^2$) originally studied by Chang, Ding, and Ye \cite{CDY1992}. Raphael and Schweyer \cite{RaphaelSchweyer2013, RaphaelSchweyer2014} have shown that blowup occurs in that setting with the precise rate
\begin{equation}\label{CDYblowuprate}
\lambda(t) \sim \kappa \frac{(T - t)^\ell}{\left| \log (T - t) \right|^{2\ell/(2\ell-1)}}
\end{equation}
for each positive integer $\ell,$
as predicted by Van den Berg, Hulshof, and King \cite{BHK2003}. D\'avila, del Pino, and Wei~\cite{DPW2020} have recently constructed more extensive examples with the $\ell = 1$ rate. 
\end{rmk}


\vspace{5mm}

\section{Decay estimate in annular regions}\label{sec:decay}

In this section, we obtain spatial decay estimates for the energy density of a solution of (\ref{e:hm})
with small energy on an annulus. 
The results are less precise than those of \cite[\S 5]{Waldron2019}, but the proof is much simpler.

\subsection{Evolution of the angular energy}\label{ss:angularenergy}\label{ss:evolutionofangular} Given $0 < R < 1,$ let
\begin{equation}
U = U_R^1 = \{ (r, \theta) \mid R \leq r \leq 1 \} \subset \R^2
\end{equation}
denote an annulus in the plane. Let $g$ be a metric on $U$ of the form
\begin{equation}\label{e:4-metric}
g=\zeta^2 (dr^2+r^2d\th^2),
\end{equation}
where $\zeta = \zeta(r, \theta)$ is a smooth function on $U$ with 
\begin{equation}\label{e:5-metric}
r|d \zeta| + | \zeta^2 - 1 | \leq \alpha r^2
\end{equation} 
for a constant $ 0 \leq \alpha \leq 1/2.$

For convenience, we shall work below in cylindrical coordinates $(s=\ln r,\th).$ Letting
$$g_0=ds^2+d\th^2$$
denote the flat cylindrical metric, we have $g=\zeta^{2} e^{2s} g_0.$
The differential of $u$ is given by
\begin{equation}
du = u_s ds + u_\theta d\theta.
\end{equation}
The tension field with respect to $g_0$ is given by
\[ \mathcal{T}_0(u)=\n_s u_s+\n_\th u_\th,\]
where $\n$ denotes the pullback connection on $u^*TN,$ as above.
The flow equation (\ref{e:hm}) with respect to the metric $g$ becomes
\begin{equation}\label{e:hm-cylin}
\p_t u = \mathcal{T}(u) = \zeta^{-2}e^{-2s}\mathcal{T}_0(u).
\end{equation}

\begin{lemma}\label{l:equ-angular} There exists $\varepsilon_0 > 0,$ depending on the geometry of $N,$ as follows.

Let $u$ be a solution of (\ref{e:hm}) on $U \times [0, T)$ with respect to a metric $g$ given by (\ref{e:4-metric}). Suppose that for some $0 < \eta^2 < \ep_0,$ 
 we have
\begin{equation}\label{e:small-assumption} 
r|du|_g+ r^2|\n du|_g + r^3|\n^2 du|_g \le \eta. 
\end{equation}
Then the angular energy
\begin{equation}\label{angularenergy}
f = f(u;r, t) := \sqrt{ \int_{ \{r\} \times S^1 } |u_\th (r,\theta,t)|^2 d\th }
\end{equation}
satisfies a differential equality
\begin{equation}\label{e:angular-energy}
\p_tf -\(\p_r^2+\frac{1}{r}\p_r - \frac{1-C \eta} {r^2}\) f \le C \alpha \eta.
\end{equation}
\end{lemma}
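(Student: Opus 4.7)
The plan is to derive the parabolic inequality for $f$ by first computing the evolution of $F := f^2 = \int_{S^1}|u_\theta|^2 d\theta$ using the Bochner formula (\ref{nablauajevolution}) applied to the angular component $u_\theta$, then converting to $f$ via Kato- and Cauchy-Schwarz-type manipulations. Since 2D harmonic map flow is conformally invariant up to the factor on the time derivative, I work in the flat cylindrical coordinates $(s=\ln r, \theta)$ with $g_0 = ds^2 + d\theta^2$ and $g = \zeta^2 e^{2s} g_0$, so that (\ref{e:hm-cylin}) reads $\partial_t u = \zeta^{-2}e^{-2s}\mathcal{T}_0(u)$ with $\mathcal{T}_0(u) = \nabla_s u_s + \nabla_\theta u_\theta$.

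For the evolution of $F$, I differentiate $\partial_t u_\theta = \nabla_\theta \mathcal{T}(u)$, commuting $\nabla_\theta$ past $\nabla_s$ (picking up an $R^N$ term) and using $\nabla_\theta u_s = \nabla_s u_\theta$ from torsion-freeness. Taking the inner product with $u_\theta$, integrating over the circle $\{r\}\times S^1$, and integrating by parts in $\theta$ for the $\nabla_\theta^2$ term, I get
\[
\partial_t F = \zeta^{-2} e^{-2s}\left[\partial_s^2 F - 2\int|\nabla u_\theta|^2 d\theta + 2\int R^N(u_\theta, u_s, u_s, u_\theta)\, d\theta\right] + E_\zeta,
\]
where $E_\zeta$ collects terms containing $\nabla\zeta$. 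Since $e^{-2s}\partial_s^2 = \partial_r^2 + \frac{1}{r}\partial_r$ and $\zeta^{-2} = 1+O(\alpha r^2)$, the principal part converts to the desired radial operator, while the curvature term is bounded by $C\eta^2 F / r^2$ via $|u_s|^2 \leq r^2|du|^2 \leq \eta^2$.

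The core estimate is a Poincar\'e-type bound
\[
\int|\nabla_\theta u_\theta|^2\, d\theta \geq (1-C\eta)\int|u_\theta|^2 d\theta,
\]
which I intend to prove by trivializing $u^*TN$ via parallel transport $P_\theta$ along the loop $\gamma(\theta) = u(r,\theta,t)$. Then $\tilde u_\theta := P_\theta^{-1} u_\theta$ is an $\R^n$-valued function on $S^1$, $\int|\nabla_\theta u_\theta|^2 d\theta = \int|\partial_\theta \tilde u_\theta|^2 d\theta$, and the Euclidean Poincar\'e inequality gives $\int|\tilde u_\theta - \overline{\tilde u_\theta}|^2 d\theta \leq \int|\partial_\theta \tilde u_\theta|^2 d\theta$. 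The mean $\overline{\tilde u_\theta}$ is the normalized endpoint of the Cartan development of $\gamma$: the loop has diameter $O(\eta)$ by the assumption $r|du| \leq \eta$, and the holonomy of $\nabla$ along it differs from the identity by $O(|R^N|\eta)$, yielding $|\overline{\tilde u_\theta}| \leq C\eta^2$. This is the main technical step.

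Finally, I convert the $F$-inequality to an $f$-inequality using $2f\partial_t f = \partial_t F$ and the identity $\partial_r^2 F = 2f\partial_r^2 f + 2(\partial_r f)^2$. The extra gradient-squared term is absorbed by splitting $\int|\nabla u_\theta|^2 d\theta = \int|\nabla_s u_\theta|^2 d\theta + \int|\nabla_\theta u_\theta|^2 d\theta$ and noting that Cauchy-Schwarz applied to $\partial_s F = 2\int\langle\nabla_s u_\theta, u_\theta\rangle d\theta$ gives $(\partial_s f)^2 \leq \int|\nabla_s u_\theta|^2 d\theta$, so that $\partial_s^2 F - 2\int|\nabla_s u_\theta|^2 d\theta \leq 2f\partial_s^2 f$. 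The $\zeta$-error term $E_\zeta$, after bounding $|\nabla^{(k)}\zeta^{-2}| \leq C\alpha$ for $k \leq 1$ (which follows from $|\zeta^2-1|\leq\alpha r^2$ together with standard interior derivative bounds for smooth metrics) and using the assumption $r^2|\nabla du| \leq \eta$, contributes at most $C\alpha(\alpha+1)\eta$ after division by $f$. The main obstacle throughout is the Poincar\'e bound of the third paragraph: showing that the Euclidean inequality on $S^1$ survives the curvature and holonomy corrections with only a multiplicative factor of $(1-C\eta)$.
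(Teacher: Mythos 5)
Your outline follows the same basic route as the paper: compute the evolution of $F = f^2 = \int_{S^1}|u_\theta|^2\,d\theta$ using the commutation of $\nabla_s,\nabla_\theta$ and the flow equation, collect the conformal-factor errors, then divide by $f$ (dropping $(\partial_s f)^2 - \int|\nabla_s u_\theta|^2 \le 0$ via Cauchy--Schwarz) and lower-bound $\int|\nabla_\theta u_\theta|^2$ by a Poincar\'e-type inequality. That all matches the paper.

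The genuine issue is in your Poincar\'e step. You propose to trivialize $u^*TN$ by parallel transport and claim $|\overline{\tilde u_\theta}| \le C\eta^2$, from the smallness of the loop and of the holonomy. That bound is true but is an \emph{absolute} bound, and it produces an \emph{additive} error:
\[
\int_{S^1}|\partial_\theta\tilde u_\theta|^2\,d\theta \;\ge\; \int_{S^1}|\tilde u_\theta|^2\,d\theta \;-\; 2\pi\,|\overline{\tilde u_\theta}|^2 \;\ge\; f^2 - C\eta^4.
\]
This does not imply the needed multiplicative lower bound $(1-C\eta)f^2$: when $f^2 \lesssim \eta^3$ the right-hand side can be negative while $(1-C\eta)f^2$ is still positive, so the desired differential inequality for $f$ is not obtained in the regime of small angular energy (which is exactly the regime of interest). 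The fix is small: use that $u_\theta$ has \emph{zero mean in local coordinates} (it is the $\theta$-derivative of a periodic $\R^n$-valued map), write
\[
\overline{\tilde u_\theta} = \frac{1}{2\pi}\int_{S^1}\bigl(P_\theta^{-1}-I\bigr)u_\theta\,d\theta,
\]
and bound the integrand with $\sup_\theta|P_\theta^{-1}-I|\le C\eta$ combined with the $L^1$--$L^2$ comparison $\int|u_\theta|\le\sqrt{2\pi}\,f$ (rather than the pointwise bound $|u_\theta|\le\eta$). This gives $|\overline{\tilde u_\theta}|\le C\eta f$, hence a multiplicative error $2\pi|\overline{\tilde u_\theta}|^2\le C\eta^2 f^2$, recovering $(1-C\eta)f^2$ (in fact with $\eta^2$). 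The paper itself sidesteps the parallel-transport bookkeeping entirely by working directly in local coordinates on $N$: it compares $|\nabla_\theta u_\theta|^2$ with $|\partial_\theta u_\theta|^2$ pointwise via the Christoffel symbols, getting a multiplicative error $C\eta|u_\theta|^2$, and then applies the classical Poincar\'e inequality on $S^1$ to $u_\theta$ (which has zero mean in coordinates); your parallel-transport version has the mild advantage that $\int|\tilde u_\theta|^2 = \int|u_\theta|^2$ holds exactly without a coordinate-vs-metric norm discrepancy, but you must carry the mean bound in the sharper $C\eta f$ form.

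One further small inaccuracy: you write that ``the holonomy of $\nabla$ along it differs from the identity by $O(|R^N|\eta)$,'' but holonomy around a loop of diameter $O(\eta)$ enclosing area $O(\eta^2)$ is $O(\eta^2)$; what you actually use (and need) is the bound $|P_\theta^{-1} - I| = O(\eta)$ for parallel transport along the \emph{open} curve, which follows from a Christoffel-symbol bound over a curve of length $O(\eta)$ and has nothing to do with the curvature of $N$. The rest of the outline (the $Q(u)$ estimate, the conversion from $F$ to $f$) is fine.
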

\begin{proof}

We start from the identity
\[ \frac12 \p_s^2f^2= \int_{S^1}\( |\n_s u_{\th}|^2+\< \n_s^2u_{\th}, u_\th\>\).\]
By interchanging derivatives and integrating by parts, we compute
\[\begin{aligned}
\frac12 \p_s^2f^2
&=\int_{S^1}\(|\n_s u_{\th}|^2 +\<\n_\th \n_s u_{s}+R(u_s, u_\th)u_s,u_{\th}\>\)\\
&=\int_{S^1}\(|\n_s u_{\th}|^2 - \<\n_su_s, \n_\th u_\th\>+\<R(u_s, u_\th)u_s,u_{\th}\>\).
\end{aligned}\]
Using the flow equation (\ref{e:hm-cylin}), we get
\begin{equation*}
\frac12 \p_s^2f^2
=\int_{S^1}\(|\n_s u_{\th}|^2 + |\n_\th u_\th|^2 - \zeta^2 e^{2s}\<\p_t u, \n_\th u_{\th}\>+\<R(u_s, u_\th)u_s,u_{\th}\>\)
\end{equation*}
On the other hand, 
we have
\begin{equation*}
\frac12 \p_tf^2=  \int_{S^1}\<\n_tu_\th, u_\th\>d\th
= -\int_{S^1}\<\p_t u, \n_\th u_{\th}\>.
\end{equation*}
Combining the above two identities, we get
\begin{equation}\label{e:4-1}
  \frac12(e^{2s}\p_t-\p_s^2)f^2=- \( \int_{S^1}|\n_s u_{\th}|^2+|\n_{\th} u_{\th}|^2 +\<R(u_s, u_\th)u_s,u_{\th}\>\) + Q(u),
\end{equation}
where
\[ Q(u)=\int_{S^1}(\zeta^2-1)e^{2s}\<\p_t u, \n_\th u_{\th}\>.\]
Now, using (\ref{e:4-metric}) and plugging in the equation (\ref{e:hm-cylin}) again, we have
\begin{equation*}
 |Q(u)|\le \sup_{S^1}\left|\zeta^{-2}(\zeta^2-1)\right|\cdot \left|\int_{S^1}\<\zeta^{2}e^{2s}\p_tu, \n_\th u_\th\>\right|
 \le C \alpha e^{2s}\left|\int_{S^1}\<\mathcal{T}_0(u), \n_\th u_\th\>\right|.
\end{equation*}
Integrating by parts and using H\"older's inequality, we get 
\begin{equation}\label{e:4-2}
  |Q(u)|\le 2 \alpha e^{2s} \left| \int_{S^1} \<\n_\th \mathcal{T}_0(u), u_\th\>\right| \le C \alpha e^{2s} \left( \int_{S^1} |\n_\th \mathcal{T}_0(u)|^2 \right)^{1/2} f.
\end{equation}
By (\ref{e:small-assumption}), we may estimate
\begin{equation*} 
\begin{split}
|\nabla_\theta \T_0| = |\nabla_\theta \left( \zeta^2 e^{2s} \T \right) |
& \leq e^{2s} \left| \p_\theta \right|_g \left( |2 \zeta d\zeta |_g |\T|_g + \zeta^2 |\nabla \T |_g \right) \\
& \leq C e^{3s} \left( \alpha e^s |\nabla du|_g + |\nabla^2 du|_g \right) \\
& \leq C \eta.
\end{split}
\end{equation*}
Then (\ref{e:4-2}) gives
\begin{equation*}
  |Q(u)|\le  C \alpha \eta e^{2s} f.
\end{equation*}
The bound (\ref{e:small-assumption}) also gives $|u_s|\le \eta$. Hence
\begin{equation*}
\left|\int_{S^1}\<R(u_s, u_\th)u_s,u_{\th}\>\right|\le C_N \eta^2 f^2 \leq \eta f^2
\end{equation*}
for $\eta$ sufficiently small. Overall, (\ref{e:4-1}) now reads
\begin{equation}\label{e:4-3.5} 
\frac12(e^{2s}\p_t-\p_s^2) f^2 \leq - \int_{S^1} \left( |\n_s u_{\th}|^2+|\n_{\th} u_{\th}|^2 \right) + \eta f^2 + C \alpha  \eta e^{2s} f.
\end{equation}
Since $|u_\th|\le \eta$ is small, we may also assume that the image of the curve $u(s, \cdot, t):S^1\to N$ lies in a coordinate chart of $N$ where the Christoffel symbol $\Ga$ is bounded by $C_N$. Then in local coordinates, we have
\[ |\p_\th u_\th| = |\n_\th u_\th - \Ga(u_\th, u_\th)|\le |\n_\th u_\th| + |\Ga(u_\th, u_\th)| \le \eta + C_N \eta^2 \leq 2 \eta.\]
Thus
\[\begin{aligned}
  |\n_{\th} u_{\th}|^2&=|\p_\th u_\th +\Ga(u_\th, u_\th)|^2\\
  &\ge   |\p_\th u_\th|^2-2|\p_\th u_\th||\Ga(u_\th, u_\th)|+|\Ga(u_\th, u_\th)|^2\\
  &\ge  |\p_\th u_\th|^2-C \eta |u_\th|^2.
\end{aligned}\]
Then the classical Poincar\'e inequality on $S^1$ yields
\begin{equation}\label{e:4-4}
  \int_{S^1}|\n_{\th} u_{\th}|^2\ge  \int_{S^2}|u_\th|^2-C \eta \int_{S^1}|u_\th|^2=(1-C\eta )f^2.
\end{equation}
Combining 
(\ref{e:4-3.5}) and (\ref{e:4-4}), we arrive at
\begin{equation}\label{e:angular}
  \frac12(e^{2s}\p_t-\p_s^2)f^2\le -\int_{S^1}|\n_s u_{\th}|^2-(1- (C+1) \eta) f^2+C\alpha \eta e^{2s} f.
\end{equation}
Next, note that
\[ \frac12\p_s f^2=f\p_sf=\int_{S^1}\<\n_s u_{\th},u_\th\>\le \(\int_{S^1}|\n_s u_{\th}|^2\)^{1/2}f,\]
so we have $|\p_sf|^2\le\int |\n_s u_{\th}|^2$. Inserting into (\ref{e:angular}), we get
\begin{equation}\label{e:4-5}
  \frac12(e^{2s}\p_t-\p_s^2)f^2\le -|\p_sf|^2-(1-C \eta )f^2+C \alpha \eta e^{2s} f.
\end{equation}
On the other hand,
\[ \frac12\p_s^2(f^2)=f\cdot \p_s^2f+|\p_s f|^2.\]
Inserting this into (\ref{e:4-5}), we obtain
\[ e^{2s}\p_tf-\p_s^2f+(1-C \eta )f\le C  \alpha \eta e^{2s}.\]
Finally, translating the above equation back to polar coordinates and dividing by $r^2,$ 
we get (\ref{e:angular-energy}) and the lemma is proved.
\end{proof}


\vspace{5mm}

\subsection{Construction of supersolution}\label{ss:constructionofsuper} Given $\frac{1}{2} \leq \gamma < 1$ and $T > 1,$
define the spacetime region
\begin{equation}
\Omega_\gamma = \left\{ (r,t) \in \LB R, 1 \RB \times \LB 0, T \right) \mid 1 - t \leq r^{2\gamma} \leq 1 \right\}.
\end{equation}
\begin{center}
\includegraphics[scale=.4]{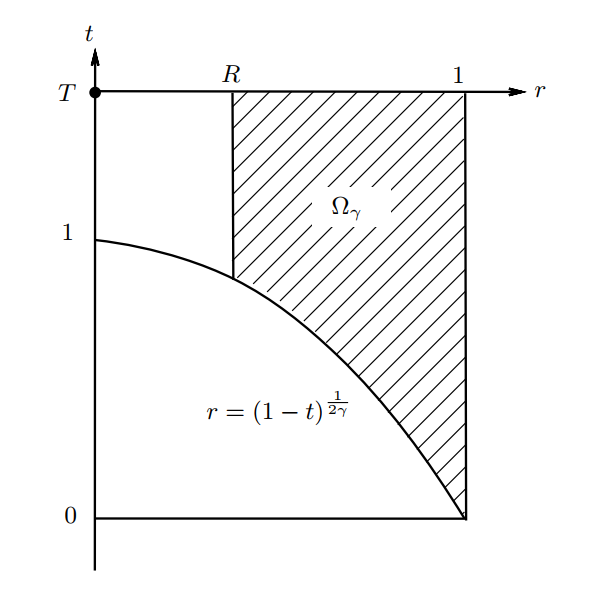}
\end{center}

We will now construct a supersolution for the equation (\ref{e:angular-energy}) on $\Omega_\gamma,$ with controlled boundary values.

Letting
\begin{equation}\label{lambdagammadef}
\ld^{R,1}_\ga (t):= \max \LB R, (1-t)_+^{\frac{1}{2\ga}} \RB,
\end{equation}
the parabolic boundary of $\Omega_\gamma$ is given by
\begin{equation*}
\begin{split}
\partial \Omega_\gamma & = \{1\} \times \LB 0 , T \right) \cup \{(\lambda_\gamma^{R,1}(t), t) \mid t \in \LB 0, T \right) \}. \\
\end{split}
\end{equation*}
Let $\frac{1}{2} \leq \gamma < \nu \leq 1$
and consider the operator
\begin{equation}
\Delta_\nu = \pd_r^2 + \frac{1}{r} \pd_r - \frac{\nu^2}{r^2}.
\end{equation}
Let
$$v_0(r,t) = \frac{\left( (1 - t)_+ + r^{2\nu} \right)^{\frac{\nu}{2 \gamma}}}{r^{\nu}}.$$
Choose $\mu$ with
\begin{equation}\label{muinterval}
0 < \mu < \min \LB \nu, \frac{\nu^2}{\gamma} - 3 \nu + 2 \RB.
\end{equation}
Now let
\begin{equation}\label{super-sol}
v(r,t) = v_0(r,t) + \left( \frac{R}{r} \right)^\nu + \frac{\nu + 1 }{ \nu^2 - \mu^2 } r^{\mu}.
\end{equation}
\begin{lemma}\label{l:super-sol}
On $\Omega_\gamma,$ the function $v(r,t)$ satisfies
\begin{equation}\label{super-sol:supersol}
\left( \partial_t - \Delta_\nu \right) v \geq r^{\mu - 2}
\end{equation}
with
\begin{equation}\label{super-sol:boundarybound}
1 \leq \sup_{\partial \Omega_\gamma} v \leq C_{\mu, \nu}
\end{equation}
and
\begin{equation}\label{super-sol:vbound}
v(r,t) \leq C_{\mu, \nu} \left( \left( \frac{\lambda^{R,1}_\gamma(t)}{r} \right)^\nu + r^{\min \LB \mu, \nu \left( \frac{\nu}{\gamma} - 1 \right) \RB} \right).
\end{equation}
\end{lemma}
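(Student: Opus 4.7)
The plan is to verify the three assertions separately, exploiting the decomposition of $v$ into three pieces. First, $r^{-\nu}$ is a radial homogeneous solution of $\De_\nu$, so the middle term $(R/r)^\nu$ is annihilated by $\pd_t - \De_\nu$. The final term satisfies $\De_\nu(r^\mu) = (\mu^2 - \nu^2)r^{\mu - 2}$, and hence contributes exactly $(\nu + 1)r^{\mu - 2}$ to the left-hand side of \eqref{super-sol:supersol}. All of the real work lies in controlling $v_0$.

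Set $\alpha = \nu/(2\ga)$ and $w = (1 - t)_+ + r^{2\nu}$, so that $v_0 = w^\alpha r^{-\nu}$. Differentiating twice in $r$ produces both a $w^\alpha r^{-\nu - 2}$ term and a $w^{\alpha - 1} r^{\nu - 2}$ term, and a direct calculation shows that these cancel exactly against the contributions of $\frac{1}{r}\pd_r$ and $-\nu^2 r^{-2}$, leaving
\[
\De_\nu v_0 = 4\nu^2 \alpha(\alpha - 1) w^{\alpha - 2} r^{3\nu - 2}.
\]
Combined with $\pd_t v_0 = -\alpha w^{\alpha - 1} r^{-\nu}$ for $t < 1$, this yields
\[
(\pd_t - \De_\nu) v_0 = -\alpha w^{\alpha-1} r^{-\nu} - 4\nu^2 \alpha(\alpha - 1) w^{\alpha - 2} r^{3\nu - 2}.
\]
The hypothesis $\tfrac12 \le \ga \le \nu \le 1$ forces $\alpha \le 1$, so the second summand is nonnegative. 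For the first, I would use $w \ge r^{2\nu}$ together with $\alpha - 1 \le 0$ and the bound $\mu < \nu^2/\ga - 3\nu + 2$, plus $r \le 1$, to estimate $-\alpha w^{\alpha - 1} r^{-\nu} \ge -\alpha r^{\nu^2/\ga - 3\nu} \ge -\alpha r^{\mu - 2}$. Summing all three contributions and using $\alpha \le \nu$ (the other consequence of $\ga \ge \tfrac12$) gives $(\pd_t - \De_\nu)v \ge (\nu + 1 - \alpha)r^{\mu - 2} \ge r^{\mu - 2}$. The case $t \ge 1$ is simpler since then $\pd_t v_0 = 0$ while $\De_\nu v_0 \le 0$.

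For \eqref{super-sol:boundarybound} I would split $\pd \Om_\ga$ into the outer segment $\{r = 1\}$ and the inner curve $\{r = \ld^{R,1}_\ga(t)\}$. On $\{r = 1\}$ the constraint $(1 - t)_+ \le 1$ yields $v_0 \le 2^\alpha$. On the inner curve, I would distinguish whether $\ld = R$ (so $(1-t)_+ \le R^{2\ga}$ and $(R/r)^\nu = 1$) or $\ld = (1 - t)_+^{1/(2\ga)}$ (so $r^{2\ga} = (1-t)_+$); in either regime $w \le 2 r^{2\ga}$ by $r^{2\nu} \le r^{2\ga}$, giving $v_0 \le 2^\alpha$, and the remaining summands are each bounded by $1$ and $(\nu+1)/(\nu^2 - \mu^2)$, respectively. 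The lower bound $v \ge 1$ holds pointwise on $\pd\Om_\ga$ because one summand is at least $1$ in each case: $v_0 \ge 1$ at $r = 1$; $(R/r)^\nu = 1$ on $\{r = R\}$; and $v_0 \ge (r^{2\ga})^\alpha / r^\nu = 1$ when $r^{2\ga} = 1 - t$.

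Finally, for \eqref{super-sol:vbound} I would split on $(1 - t)_+ \gtreqless r^{2\nu}$. When $(1 - t)_+ \ge r^{2\nu}$, $w \le 2(1 - t)_+$ gives $v_0 \le 2^\alpha (1 - t)_+^\alpha / r^\nu \le 2^\alpha(\ld^{R,1}_\ga(t)/r)^\nu$; when $(1 - t)_+ \le r^{2\nu}$, $w \le 2 r^{2\nu}$ gives $v_0 \le 2^\alpha r^{\nu^2/\ga - \nu} = 2^\alpha r^{\nu(\nu/\ga - 1)}$. Combining these with $(R/r)^\nu \le (\ld^{R,1}_\ga/r)^\nu$, and absorbing $r^\mu$ into $r^{\min\{\mu, \nu(\nu/\ga - 1)\}}$ using $r \le 1$, yields the desired bound. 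The main obstacle is the $\De_\nu v_0$ computation and the precise matching of the resulting exponent $\nu^2/\ga - 3\nu$ against the condition on $\mu$; once those cancellations are identified, the remaining verifications reduce to case-by-case bookkeeping.
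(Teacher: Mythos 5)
Your proposal is correct and follows essentially the same route as the paper's proof. The computation $\Delta_\nu v_0 = 4\nu^2\alpha(\alpha-1)\,w^{\alpha-2} r^{3\nu-2}$ with $\alpha = \nu/(2\gamma)$, $w = (1-t)_+ + r^{2\nu}$ is exactly the paper's identity $\Delta_\nu v_0 = \frac{\nu^3}{\gamma^2}(\nu-2\gamma)\, v_0\, r^{4\nu-2}/(1-t+r^{2\nu})^2$ in different notation, and the estimate $\partial_t v_0 \ge -\alpha r^{\nu^2/\gamma - 3\nu} \ge -\alpha r^{\mu-2}$ is the same exponent bookkeeping the paper carries out; your case analysis for the boundary and global bounds is a careful rendering of what the paper dismisses as "by inspection."
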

\begin{proof} We first claim that
\begin{equation}\label{ellipticsupsol}
\Delta_\nu v_0 \leq 0.
\end{equation}
By a straightforward calculation, one finds that
\begin{equation*}
\begin{split}
\partial_r v_0 
& = \nu v_0 \LB \frac{\nu}{\gamma} \frac{r^{2 \nu - 1}}{(1 - t + r^{2 \nu})}  - \frac{1}{r} \RB
\end{split}
\end{equation*}
and
\begin{equation*}
\begin{split}
\partial_r^2 v_0 
& = \nu v_0 \LB \frac{\nu^2}{\gamma^2} \frac{r^{4 \nu -2}}{(1 - t + r^{2 \nu})^2}  \left(\nu - 2 \gamma \right) - \frac{\nu}{\gamma} \frac{r^{2 \nu - 2}}{1 - t + r^{2 \nu} } + \frac{\nu + 1}{r^2} \RB.
\end{split}
\end{equation*}
This gives
\begin{equation*}
\begin{split}
\left( \pd_r^2 + \frac{1}{r} \pd_r - \frac{\nu^2}{r^2} \right) v_0 
& = \frac{\nu^3}{\gamma^2} v_0 \frac{r^{4 \nu - 2}}{(1 - t + r^{2 \nu})^2 } \left( \nu - 2 \gamma \right),
\end{split}
\end{equation*}
which is non-positive for $t \leq 1,$ since $\nu - 2 \gamma \leq \nu - 1 \leq 0.$ This proves (\ref{ellipticsupsol}).

Because $2 \gamma \geq \nu,$ we also have
\begin{equation*}
\begin{split}
\partial_t v = \partial_t v_0 = \frac{-\left( \frac{\nu}{2 \gamma}\right) }{ \left(1 - t + r^{2 \nu}\right)^{\frac{- \nu + 2 \gamma}{2 \gamma} } r^{\nu} } \geq  -\left( \frac{\nu}{2 \gamma}\right) r^{-\frac{ \nu(2 \gamma - \nu) + \nu \gamma }{\gamma}} \geq - \nu r^{-\frac{ \nu( 3 \gamma - \nu) }{\gamma}}.
\end{split}
\end{equation*}
Note that
$$\Delta_\nu r^{\mu} = \left( \mu^2 - \nu^2 \right) r^{\mu - 2} .$$
By (\ref{muinterval}), we have
\begin{equation*}
\mu - 2 \leq \frac{\nu}{\gamma} \left( \nu - 3 \gamma \right).
\end{equation*}
We therefore obtain
\begin{equation}\label{e:5-4}
\left( \partial_t - \Delta_\nu \right) v \geq - \nu r^{-\frac{ \nu( 3 \gamma - \nu) }{\gamma}} + (\nu+1) r^{\mu - 2} \geq r^{\mu - 2}
\end{equation}
for $r \leq 1$, which establishes (\ref{super-sol:supersol}).

The bounds (\ref{super-sol:boundarybound}-\ref{super-sol:vbound}) follow from (\ref{super-sol}) by inspection. 
\end{proof}

\begin{prop}\label{p:decay-est}
Let $\frac12 \leq \gamma < \nu \leq 1,$ and $\mu$ be as in (\ref{muinterval}). Suppose $g(r,t):\Om_\ga \to \R$ satisfies the equation
\begin{equation*}\label{e:5-0}
(\p_t-\De_\nu) g \le A r^{\mu - 2}
\end{equation*}
with
\begin{equation*}
\sup_{\partial \Omega_\gamma} g \leq A.
\end{equation*}
Then $g$ satisfies a bound
\begin{equation}\label{e:decay-est}
g (r,t)\le C_{\mu, \nu} A \left( \left( \frac{\lambda^{R,1}_\gamma(t)}{r} \right)^\nu + r^{\min \LB \mu, \nu \left( \frac{\nu}{\gamma} - 1 \right) \RB} \right)
\end{equation}
for $(r,t) \in \Omega_\gamma.$
\end{prop}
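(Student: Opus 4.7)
The plan is to apply a parabolic maximum principle using $A \cdot v$ as a barrier, where $v$ is the supersolution constructed in Lemma~\ref{l:super-sol}. Setting $w := g - A v$, the differential inequalities on $g$ and $v$ combine to give
\begin{equation*}
(\p_t - \De_\nu) w \le A r^{\mu-2} - A r^{\mu-2} = 0 \quad \text{on } \Om_\ga.
\end{equation*}
To obtain boundary control, I would first upgrade the supremum bound (\ref{super-sol:boundarybound}) to a pointwise bound $v \ge 1$ on $\p \Om_\ga$. On the outer piece $\{r = 1\}$, this is immediate since $v_0(1,t) = (2-t)^{\nu/(2\ga)} \ge 1$. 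On the inner piece $\{r = \ld^{R,1}_\ga(t)\}$, there are two cases: if $r = R$ then $(R/r)^\nu = 1$ and the bound follows from the middle term of (\ref{super-sol}); if instead $r = (1-t)^{1/(2\ga)}$, then $r^{2\ga} = 1-t$, and one computes
\begin{equation*}
v_0(r,t) = \frac{(r^{2\ga} + r^{2\nu})^{\nu/(2\ga)}}{r^\nu} \ge \frac{(r^{2\ga})^{\nu/(2\ga)}}{r^\nu} = 1.
\end{equation*}
Combined with the hypothesis $g \le A$ on $\p \Om_\ga$, this yields $w \le 0$ on $\p \Om_\ga$.

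I would then apply the parabolic maximum principle to deduce $w \le 0$ throughout $\Om_\ga$. The operator $\p_t - \De_\nu = \p_t - \p_r^2 - r^{-1} \p_r + \nu^2 r^{-2}$ has strictly positive zeroth-order coefficient $\nu^2/r^2 > 0$, which is the favorable sign: at a positive interior maximum $(r_0, t_0)$ of $w$, the usual inequalities $\p_t w \ge 0$, $\p_r w = 0$, $\p_r^2 w \le 0$ combine with the zeroth-order term to force $(\p_t - \De_\nu) w (r_0, t_0) \ge (\nu^2/r_0^2)\, w(r_0, t_0) > 0$, contradicting the differential inequality above. Applying this reasoning on $\Om_\ga \cap \{t \le T'\}$ for each $T' < T$ gives $w \le 0$ on $\Om_\ga$; equivalently, $g \le A v$. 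The estimate (\ref{e:decay-est}) then follows immediately from the upper bound (\ref{super-sol:vbound}) on $v$.

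The proof is essentially a parabolic comparison argument once the supersolution is in hand, and I do not expect any significant analytic obstacle. The only step requiring real care is upgrading the boundary bound on $v$ from supremum to pointwise form, since the comparison $w \le 0$ on $\p \Om_\ga$ depends on having $v \ge 1$ at every boundary point rather than just on average. All the hard work---finding the right ansatz that simultaneously dominates the forcing term $r^{\mu-2}$, handles the parabolic time derivative near the inner free boundary $r = (1-t)^{1/(2\ga)}$, and produces the desired two-scale decay rate---has already been carried out in Lemma~\ref{l:super-sol}.
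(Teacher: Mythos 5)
Your proof is correct and follows the same route as the paper, which simply invokes the comparison principle with $A\,v$ as the barrier. Your explicit verification that $v\ge 1$ pointwise (not merely in supremum) on $\partial\Omega_\gamma$ is the right reading of what Lemma~\ref{l:super-sol} must deliver for the comparison to close, and your maximum-principle sketch correctly exploits the favorable sign of the zeroth-order term $\nu^2/r^2>0$ and the one-sided time derivative at the top slice $\{t=T'\}$ before letting $T'\nearrow T$.
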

\begin{proof} This follows from the previous lemma by the comparison principle, applied to the functions $g$ and $A v(r,t).$
\end{proof}

\vspace{5mm}

\subsection{Decay estimates}\label{ss:decayests}

We can now prove a spatial decay estimate for the angular component 
of the energy (Lemma \ref{lemma:angulardecay}). This will be combined with our assumption on the stress-energy tensor (or on $\dbar u$) to obtain bounds on $du$ 
in the neck region (Theorems \ref{thm:decayest}-\ref{thm:dbaruboundeddecayest}).

\begin{lemma}\label{lemma:angulardecay} Let $0 < \eta^2 < \varepsilon_0,$ depending on the geometry of $N,$ and $\frac{1}{2} \leq \gamma < 1.$ Let $u$ be a solution of (\ref{e:hm}) on $U_R^1 \times \LB 0, T \right),$ with $T > 1,$ where $U_R^1$ has a metric $g$ as in (\ref{e:4-metric}). Suppose that for all $(x,t)$ with
\begin{equation}\label{angulardecay:range}
\max \LB R^{2 \gamma}, (1-t) \RB \leq |x|^{2 \gamma} \leq 1
\end{equation}
we have
\begin{equation}
|x| |du (x,t)| + |x|^2|\nabla du(x,t)| + |x|^3 |\nabla^{(2)} du(x,t) | \leq \eta.
\end{equation}
Then for
\begin{equation}\label{angulardecay:nurange}
\gamma < \nu \leq \sqrt{1 - C \eta}
\end{equation}
and for all $(r,t)$ as in (\ref{angulardecay:range}), the angular energy (\ref{angularenergy}) satisfies
\begin{equation}
f(u; r,t) \leq C_{\gamma, \nu} \eta \left( \left( \frac{\lambda^{R,1}_\gamma(t) }{r}\right)^{\nu} + r^{\nu \left( \frac{\nu}{\gamma} - 1 \right)} \right).
\end{equation}
\end{lemma}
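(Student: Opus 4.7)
The plan is to read off the differential inequality of Lemma \ref{l:equ-angular} for $f(u;r,t)$, rewrite it in the form required by Proposition \ref{p:decay-est}, verify the boundary data assumption, choose $\mu$ correctly, and conclude by the comparison argument embedded in that proposition.

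First I would note that, because of the pointwise hypothesis, Lemma \ref{l:equ-angular} applies throughout $\Omega_\gamma,$ giving
\[
\partial_t f - \Bigl(\partial_r^2 + \tfrac{1}{r}\partial_r - \tfrac{1-C\eta}{r^2}\Bigr) f \;\le\; C\alpha(\alpha+1)\eta
\]
on $\Omega_\gamma.$ Under the assumption $\nu \le \sqrt{1-C\eta}$ in (\ref{angulardecay:nurange}), we have $\nu^2 \le 1-C\eta,$ and since $f\ge 0$ we may replace $1-C\eta$ by $\nu^2,$ obtaining
\[
(\partial_t - \Delta_\nu)\, f \;\le\; C\alpha(\alpha+1)\eta \;\le\; C\alpha(\alpha+1)\eta \cdot r^{\mu-2}
\]
on $\Omega_\gamma$ (since $r\le 1$ and $\mu<2$). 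Thus the forcing hypothesis of Proposition \ref{p:decay-est} holds with $A$ comparable to $(1+\alpha)^2 \eta.$

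Next I would control $f$ on the parabolic boundary $\partial\Omega_\gamma = \{r=1\}\cup\{r=\lambda^{R,1}_\gamma(t)\}.$ Using that $g=\zeta^2(dr^2+r^2 d\theta^2)$ gives $|u_\theta|^2 \le \zeta^2 r^2 |du|^2 \le (1+\alpha r^2)\, r^2 |du|^2,$ the pointwise bound $r|du|\le \eta$ immediately implies $|u_\theta|\le \sqrt{1+\alpha}\,\eta$ on $\{r\le 1\},$ so that $f \le \sqrt{2\pi(1+\alpha)}\,\eta$ on all of $\partial\Omega_\gamma.$ Combining with the forcing bound, the hypotheses of Proposition \ref{p:decay-est} are met with $A = C(1+\alpha)^2 \eta.$

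Finally I would select $\mu$ in the range (\ref{muinterval}) so as to make the second exponent in the conclusion of Proposition \ref{p:decay-est} equal to $\nu(\nu/\gamma - 1).$ Specifically, I would pick any
\[
\mu \in \bigl( \nu(\nu/\gamma - 1),\; \nu^2/\gamma - 3\nu + 2 \bigr);
\]
this interval is non-empty precisely because $\nu < 1$ (the two endpoints differ by $2(1-\nu)$), and both $\mu < \nu$ and $\mu > 0$ are automatic under our constraints $\gamma < \nu < 1$ and $\gamma \ge \tfrac{1}{2}.$ With this choice, $\min\{\mu,\nu(\nu/\gamma-1)\} = \nu(\nu/\gamma-1),$ and Proposition \ref{p:decay-est} gives exactly the claimed estimate.

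The only real pitfall is the bookkeeping: one must verify that the allowed range for $\mu$ is non-empty and contains a value lying above $\nu(\nu/\gamma-1),$ and that the small-$\eta$ assumption together with $\gamma < 1$ permits a choice of $\nu$ strictly between $\gamma$ and $\sqrt{1-C\eta}.$ Once these three elementary inequalities are checked, the lemma follows from Lemma \ref{l:equ-angular} and Proposition \ref{p:decay-est} without further work.
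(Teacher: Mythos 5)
Your proposal is correct and follows essentially the same path as the paper's (very brief) proof: apply Lemma \ref{l:equ-angular} to get the differential inequality, weaken $1-C\eta$ to $\nu^2$ using $f\ge 0$, bound $f$ on the parabolic boundary by a multiple of $\eta$ using the pointwise hypothesis, and invoke Proposition \ref{p:decay-est} with a suitable $\mu$. The paper simply sets $\mu = \nu(\nu/\gamma - 1)$ and $A = \eta(1+C\alpha(\alpha+1))$, which lies in the admissible range (\ref{muinterval}) and makes $\min\{\mu,\nu(\nu/\gamma-1)\}=\nu(\nu/\gamma-1)$ trivially. One small imprecision in your write-up: you assert that $\mu<\nu$ is ``automatic'' for every $\mu$ in the interval $\bigl(\nu(\nu/\gamma-1),\,\nu^2/\gamma-3\nu+2\bigr)$, but the upper endpoint of that interval can exceed $\nu$ (e.g.\ $\gamma=\tfrac12$, $\nu=0.9$ gives upper endpoint $0.92>\nu$), so you should either intersect your interval with $(0,\nu)$ or simply take $\mu=\nu(\nu/\gamma-1)$ as the paper does; this does not affect the validity of the argument.
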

\begin{proof} This follows from Lemma \ref{l:equ-angular} and Proposition \ref{p:decay-est}, where we let $\mu = \nu \left( \frac{\nu}{\gamma} - 1 \right)$ and $A = C \eta.$
\end{proof}

Given $\frac12 \leq \gamma < 1,$ we let
\begin{equation}\label{lambdagammaRrhodef}
\lambda^{R, \rho}_\gamma(t) = \max \LB R, \rho \left( 1 - \frac{t}{\rho^2} \right)^{1/2\gamma} \RB.
\end{equation}



\begin{thm}\label{thm:decayest} There exists $\varepsilon_0 > 0,$ depending on the geometry of $N,$ and $R_0 > 0,$ depending on the geometry of $\Sigma,$ as follows. 

For $0 < R \leq \rho < \min \LB R_0, \sqrt{T} \RB$ and $p \in \Sigma,$ let $u : U^{2\rho}_{R/2}(p) \times \LB 0, T \right) \to N$ be a solution of (\ref{e:hm}) that satisfies (\ref{stressbound}) for some $q > 1.$
Suppose that $\tau + \rho^2 \leq t_0 < T$ is such that for all $r,t$ with $\lambda^{R, \rho}_\gamma (t - t_0 + \rho^2) \leq r \leq 2\rho,$ we have
\begin{equation}\label{decayest:Ebound}
E( u(t), U^r_{r/2} ) \leq \varepsilon < \varepsilon_0.
\end{equation}
Suppose that $\nu$ satisfies 
\begin{equation}\label{decayest:nurange}
\gamma < \nu \leq \sqrt{1 - C \sqrt{\eps} }.
\end{equation}
Then for all $R \leq r \leq \rho,$ we have
\begin{equation}\label{decayest:est}
r | du(x,t_0)| \leq C_{\gamma, \nu} \sqrt{\varepsilon} \left( \left( \frac{R}{r}\right)^{\nu} + \left( \frac{r}{\rho} \right)^{\nu \left( \frac{\nu}{\gamma} - 1 \right)} \right) + C\sqrt{\sigma} r^{ 1 - \frac{1}{q} },
\end{equation}
where $r = \dist_g(x,p).$
\end{thm}
\begin{proof} For $R < R_0$ sufficiently small, radial balls in conformal and geodesic coordinates are uniformly equivalent, so we may conflate the two in the statement.

We rescale $u$ by
\begin{equation}
u'(x', t') = u\left( \rho x', t_0 + \rho^2 (t' - 1) \right).
\end{equation}
Then $u'$ solves (\ref{e:hm}) on an annulus $U = U_{R/\rho}^1,$ with $\alpha$ arbitrarily small in (\ref{e:5-metric}). 
By the $\varepsilon$-Regularity Theorem \ref{thm:epsilonreg}, applied on 
parabolic cylinders covering the domain, 
(\ref{decayest:Ebound}) implies a bound
\begin{equation*}
\sup_{\lambda^{R/\rho,1}_\gamma(t') \leq |x'| \leq 1} \left( r' |d u' (x',t')| + r'^2|\nabla du' (x',t')| + r'^3 |\nabla^{(2)} d u'(x',t') | \right) \leq C \sqrt{\varepsilon}.
\end{equation*}
This is equivalent to (\ref{e:small-assumption}), with $\eta = C \sqrt{\varepsilon},$ and (\ref{decayest:nurange}) implies (\ref{angulardecay:nurange}). We may therefore apply Lemma \ref{lemma:angulardecay}, to obtain a bound
\begin{equation*}
f(u';r',t') \leq C_{\nu, \gamma} \sqrt{\varepsilon} \left( \left( \frac{R}{\rho r' } \right)^{\nu} + r'^{\nu \left( \frac{\nu}{\gamma} - 1 \right)} \right).
\end{equation*}
Undoing the rescaling, we obtain
\begin{equation}\label{decayest:radialest}
f(u; r,t) \leq  C_{\nu, \gamma} \sqrt{\varepsilon} \left( \left( \frac{R}{r} \right)^{\nu} + \left( \frac{r}{\rho} \right)^{\nu \left( \frac{\nu}{\gamma} - 1 \right) } \right)
\end{equation}
for $\lambda^{R, \rho}_\gamma (t - t_0 + \rho^2) \leq r \leq \rho.$

It remains to bound the radial energy using the stress-energy bound (\ref{stressbound}).
By definition (\ref{stressenergy}), the stress-energy tensor is conformally invariant and has the form
\[ S=\frac12\(|u_r|^2-\frac{1}{r^2}|u_\th|^2\)(dr^2-r^2d\th^2)+2\<u_r, u_\th\>drd\th.\]
It follows that
\begin{equation}\label{decayest:du2est} |du|^2=|u_r|^2+ \frac{1}{r^2}|u_\th|^2\le \frac{2}{r^2}|u_\th|^2+\sqrt{2} |S|.
\end{equation}
Therefore, by H\"older's inequality and (\ref{stressbound}), we have
\begin{equation*}
  \begin{aligned}
  E(u(t), U_{r/2}^r)& \le \int_{U_{r/2}^r}\frac{2}{r^2}|u_\th|^2+ \sqrt{2} \int_{U_{r/2}^r}|S|\\
  & \leq \int_{r/2}^r\frac{2}{r}f^2(r,t) dr+C\sigma r^{2\(1-\frac{1}{q}\)}.
\end{aligned}
\end{equation*}
This combined with (\ref{decayest:radialest}) gives
\[  E(u(t), U_{r/2}^r)\le C_{\nu, \gamma} \varepsilon \left( \left( \frac{R}{r}\right)^{2\nu} + \left( \frac{r}{\rho} \right)^{2\nu \left( \frac{\nu}{\gamma} - 1 \right)} \right) + C \sigma r^{ 2\(1 - \frac{1}{q} \)}.\]
Now the desired estimate (\ref{decayest:est}) follows again from the $\ep$-regularity Theorem~\ref{thm:epsilonreg}, applied on parabolic cylinders. 
\end{proof}

\begin{thm}\label{thm:dbaruboundeddecayest} Let $u$ be as in the previous theorem, and assume that $N$ is a Hermitian manifold. In place of (\ref{stressbound}), assume that
$$\sup_{U^{2\rho}_{R/2} \times \LB \tau + \frac{\rho^2}{2} , T \right) } e_{\dbar}(u) \leq \delta.$$
We then have
\begin{equation*}
r | du(x,t_0)| \leq C_{\gamma, \nu} \sqrt{\varepsilon} \left( \left( \frac{R}{r}\right)^{\nu} + \left( \frac{r}{\rho} \right)^{\nu \left( \frac{\nu}{\gamma} - 1 \right)} \right) + C \sqrt{\delta} r.
\end{equation*}
\end{thm}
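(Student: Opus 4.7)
The strategy will be to repeat the proof of Theorem~\ref{thm:decayest} verbatim, substituting the pointwise stress-energy bound obtained from Lemma~\ref{lemma:stress-tensor} and the hypothesis $|\bar{\partial} u| \leq \delta$ in place of the $L^q$ bound used there. Indeed, combining the two gives
\begin{equation*}
|S(u)| \leq 4\sqrt{e_{\partial}(u)\, e_{\bar\partial}(u)} \leq 2\sqrt{2}\,\delta\,|du|,
\end{equation*}
since $e_{\partial}(u) \leq \tfrac{1}{2}|du|^2$. This is the only ingredient of the previous proof that needs to change.

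First I would rescale so that $\rho = 1$ and the metric on the annulus is nearly flat, as in the proof of Theorem~\ref{thm:decayest}. The $\varepsilon$-regularity Theorem~\ref{thm:epsilonreg} combined with Lemma~\ref{lemma:angulardecay} then yields the same angular decay estimate
\begin{equation*}
f(u;r,t) \leq C_{\nu,\gamma}\sqrt{\varepsilon}\left( \left(\frac{R}{r}\right)^{\nu} + \left(\frac{r}{\rho}\right)^{\nu(\nu/\gamma - 1)} \right)
\end{equation*}
as before. The coordinate identity $|du|^2 \leq \sqrt{2}\,|S| + 2\,r^{-2}|u_\theta|^2$ also carries over unchanged. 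Substituting the new pointwise bound $|S| \leq 2\sqrt{2}\,\delta\,|du|$ and integrating over the annulus $U_{r/2}^{r}$ gives
\begin{equation*}
\int_{U_{r/2}^{r}} |du|^2 \leq 4\delta \int_{U_{r/2}^{r}} |du| \;+\; 2 \int_{r/2}^{r} \frac{f(u;r',t)^2}{r'}\, dr'.
\end{equation*}
Cauchy--Schwarz (using $\mathrm{vol}(U_{r/2}^{r}) \leq C r^2$) followed by Young's inequality will absorb half of $\int |du|^2$ into the left-hand side and leave an error of order $\delta^2 r^2$, producing
\begin{equation*}
E(u(t), U_{r/2}^{r}) \leq C \delta^2 r^2 + C_{\nu,\gamma}\,\varepsilon\left( \left(\frac{R}{r}\right)^{2\nu} + \left(\frac{r}{\rho}\right)^{2\nu(\nu/\gamma - 1)} \right).
\end{equation*}

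A final application of Theorem~\ref{thm:epsilonreg}, together with the standard parabolic bootstrapping used there for the derivative estimates ($k \geq 1$), then converts this energy bound on $U_{r/2}^{r}$ into the claimed pointwise bound on $r^{1+k}|\nabla^{(k)} du|$. I do not anticipate any serious obstacle here: the only delicate point is that Young's inequality must close cleanly, which it does precisely because the estimate $|S| \leq 2\sqrt{2}\,\delta\,|du|$ is linear in $|du|$, producing a clean $\delta r$ error in place of the weaker $\sqrt{\sigma}\, r^{1 - 1/q}$ appearing in Theorem~\ref{thm:decayest}.
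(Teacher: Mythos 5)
Your proposal is correct, and the final result matches the paper's statement, but you take a slightly more roundabout route than the paper does. The paper's proof of Theorem~\ref{thm:dbaruboundeddecayest} is a one-liner: using the polar-coordinate identity $\bp u = \left( u_r + \tfrac{i}{r}u_\theta \right) d\bar{z}$ (up to conformal normalization), the bound $|\bp u| \leq \delta$ immediately gives $|u_r| \leq C\delta + \tfrac{C}{r}|u_\theta|$, hence $|du|^2 \leq C\delta^2 + \tfrac{C}{r^2}|u_\theta|^2$, and the annular energy satisfies $E(u(t), U_{r/2}^r) \leq C\delta^2 r^2 + C\int_{r/2}^r f^2\, dr'/r'$ with no absorption step. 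You instead pass through the stress-energy inequality $|S| \leq 2\sqrt{2}\,\delta\,|du|$ from Lemma~\ref{lemma:stress-tensor}, feed it into the identity $|du|^2 \leq \sqrt{2}|S| + 2r^{-2}|u_\theta|^2$, and then close the estimate with Cauchy--Schwarz and Young. Both reach the same conclusion; the paper simply replaces the stress-energy bound entirely by the radial-versus-angular decomposition of $\bp u$, which avoids the Young absorption and is slightly cleaner. Your argument is also logically complete and would serve as a valid alternative proof.
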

\begin{proof} We argue exactly as in the previous theorem, up to (\ref{decayest:du2est}). We have $\bp u = \left( u_r + \frac{i}{r} u_\theta \right) d\bar{z},$ hence
$$|u_r|^2 \leq 2 \left( \frac{1}{r^2} |u_\theta|^2 + e_\dbar \right)$$
and
\begin{equation*}
|du|^2 \leq \frac{3}{r^2} |u_\theta|^2 + 2 e_\dbar \leq \frac{3}{r^2} |u_\theta|^2 + 2 \delta.
\end{equation*}
The result follows by using this identity in place of (\ref{decayest:du2est}).
\end{proof}

\vspace{5mm}

\section{H\"older continuity and bubble-tree convergence}\label{sec:maintheorems}

We now assemble our main technical results, Theorems \ref{thm:maintechnical} and \ref{thm:lowdbarmaintechnical}; 
these give strong decay estimates in the neck region, assuming only the stress-energy bound (\ref{stressbound}) or non-concentration of the (anti-)holomorphic energy. 
H{\"o}lder continuity of the body map follows directly from these estimates, as do the energy identity and no-neck properties in the bubble-tree decomposition.

\subsection{H{\"o}lder continuity assuming stress-energy bound}\label{ss:holderstress} 

Throughout \S \ref{ss:holderstress}-\ref{ss:holdernonconc}, $u$ will denote a solution of (\ref{e:hm}) on $\Sigma \times \LB 0, T \right),$ which we assume to be smooth for $0 < t < T.$

\begin{thm}\label{thm:maintechnical} Let $K, q > 1,$ and $\nu > 0$ with
\begin{equation}\label{maintechnical:qsize}
\max \LB \frac{1}{2}, \frac{1}{q} \RB \cdot \frac{2q - 1}{q} \leq \nu^2 < 1. 
\end{equation}
There exists $\varepsilon_1 > 0,$ depending on $K, q, \nu,$ and the geometry of $N,$ as well as $R_0 > 0,$ depending on the geometry of $\Sigma,$ as follows.

Let $\sigma > 0,$ $0 < \varepsilon < \varepsilon_1,$ and
\begin{equation}\label{maintechnical:rhosmallness}
0 < \rho < \min \LB R_0, \sqrt{T}, \left( \frac{\varepsilon}{C \sigma} \right)^{\frac{q}{2q-2}} \RB.
\end{equation}
Let $p \in \Sigma,$ and suppose that for some $0 \leq \tau < T - \rho^2,$ the stress-energy bound (\ref{stressbound}) holds on $B_\rho = B_\rho(p).$ Suppose further that $t \in \LB \tau + \rho^2, T \right)$ is such
that 
\begin{equation}\label{maintechnical:assumption}
\sup_{t - \rho^2 \leq s \leq t} E(u(s), B_\rho ) \leq E(u(t), B_{\rho / 2} ) + K\varepsilon.
\end{equation}
Then for $2\lambda_{\varepsilon, \rho , p}(u(t)) \leq r \leq \rho/2,$ 
we have
\begin{equation}\label{maintechnical:estimate}
r| du(x,t)| \leq C_{K,q,\nu} \sqrt{\varepsilon} \left( \left( \frac{\lambda_{\varepsilon, \rho , p}(u(t)) }{r} \right)^\nu + \left( \frac{r}{\rho} \right)^{ 1 - \frac{1}{q} } \right).
\end{equation}
Here $r = \dist(x,p).$
\end{thm}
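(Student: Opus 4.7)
The plan is to apply Theorem~\ref{thm:decayest} at time $t_0 = t$ with parameters tuned to the Type-I scaling produced by Corollary~\ref{cor:nnhbscblowuprate}. Writing $\lambda = \lambda_{\varepsilon, \rho, p}(u(t))$, I would set $\gamma = \max\bigl(\tfrac{1}{2}, \tfrac{1}{q}\bigr)$ (so $\tfrac{1}{2\gamma} = \min(1, q/2)$), and take $R_* = c\lambda$ with universal constant $c \geq 4$ together with $\rho_* = \rho/4$ in place of the $R$ and $\rho$ appearing in Theorem~\ref{thm:decayest}. This ensures that the annular domain $U^{\rho/2}_{R_*/2}$ sits inside $B_\rho$, so the theorem is applicable in principle.

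The crucial step is to verify the annular smallness $E(u(s), U^r_{r/2}) < \varepsilon' < \varepsilon_0$ throughout the required trapezoidal region. Given such $(s, r)$ with $r \leq 2\rho_* = \rho/2$, take an annular cutoff $\varphi \equiv 1$ on $U^r_{r/2}$ with $\supp \varphi \subset U^{2r}_{r/4} \subset B_\rho$. Integrating the pointwise energy identity (\ref{pointwiseenergy}) against $\varphi$ from $s$ to $t$ and applying H\"older's inequality to the $\nabla^{(2)}\varphi \cdot S$ term yields
\[
E(u(s), U^r_{r/2}) \leq E\bigl(u(t), U^{2r}_{r/4}\bigr) + \int_s^t \!\! \int |\mathcal{T}(u)|^2 \varphi \, d\tau + C \sigma (t-s) r^{-2/q}.
\]
The first term is $O(\varepsilon)$ by a dyadic decomposition (the number of dyadic annuli in $\supp \varphi$ is bounded, and each has energy $< \varepsilon$ since $r/4 \geq \lambda$). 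The second is $O(K\varepsilon)$ by a parallel integration against a cutoff on $B_\rho$, using the non-concentration hypothesis (\ref{maintechnical:assumption}) together with (\ref{maintechnical:rhosmallness}). The third is $O(\varepsilon)$ via the trapezoidal bound $(t-s)/r^{2/q} \leq C \rho^{2(q-1)/q}$, paired again with (\ref{maintechnical:rhosmallness})---this is exactly the point of choosing $\gamma = \max\bigl(\tfrac{1}{2}, \tfrac{1}{q}\bigr)$. Altogether $E(u(s), U^r_{r/2}) \leq C(K)\varepsilon < \varepsilon_0$ provided $\varepsilon < \varepsilon_1 := \varepsilon_0 / C(K)$.

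Theorem~\ref{thm:decayest} then delivers, for $c\lambda \leq r \leq \rho/4$,
\[
r^{1+k}|\nabla^{(k)} du(t)| \leq C\sqrt{\varepsilon}\bigl((\lambda/r)^\nu + (r/\rho)^{\nu(\nu/\gamma - 1)}\bigr) + C\sqrt{\sigma}\, r^{1-1/q}.
\]
A brief elementary calculation in the two cases $q \leq 2$ ($\gamma = 1/q$) and $q \geq 2$ ($\gamma = 1/2$) shows that the hypothesis (\ref{maintechnical:qsize}) is exactly what forces $\nu(\nu/\gamma - 1) \geq 1 - 1/q$, so the middle exponent can be replaced by $1 - 1/q$ at the cost of an absolute constant. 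The stress-energy term is then absorbed via
\[
\sqrt{\sigma}\, r^{1-1/q} \leq \sqrt{\sigma}\, \rho^{1-1/q}(r/\rho)^{1-1/q} \leq \sqrt{\varepsilon}\, (r/\rho)^{1-1/q}
\]
by (\ref{maintechnical:rhosmallness}). Extending from $[c\lambda, \rho/4]$ to the stated range $[2\lambda, \rho/2]$ amounts to handling the narrow bands $[2\lambda, c\lambda]$ and $[\rho/4, \rho/2]$ via direct $\varepsilon$-regularity (Theorem~\ref{thm:epsilonreg}), where respectively $(\lambda/r)^\nu$ and $(r/\rho)^{1-1/q}$ are bounded below by absolute constants.

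The hard part is the simultaneous smallness verification in the middle paragraph: three distinct error contributions---spatial concentration (governed by $\lambda$), temporal dissipation (governed by the non-concentration hypothesis), and the stress-energy flux (governed by $\rho$)---must all fit simultaneously below $\varepsilon_0$ throughout the trapezoidal spacetime region. The precise interplay of the choice $\gamma = \max\bigl(\tfrac{1}{2},\tfrac{1}{q}\bigr)$, the calibration of $\nu$ in (\ref{maintechnical:qsize}), and the smallness of $\rho$ in (\ref{maintechnical:rhosmallness}) is exactly what makes this three-way balance succeed.
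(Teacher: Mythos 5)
Your proposal is correct in its essentials and takes a genuinely different—and arguably cleaner—route than the paper. The paper proceeds indirectly: it introduces an auxiliary radius $R$ defined as the minimal scale at which the non-concentration hypothesis (\ref{maintechnical:assumption}) holds with headroom $3n\varepsilon$ (for $n \geq K+1$), then invokes Theorem~\ref{thm:lambdaest} to bound the outer energy scale $\lambda_{3n\varepsilon,\rho,p}(u(s))$ over the window $[t-\rho^2,t]$, which verifies (\ref{decayest:Ebound}) for Theorem~\ref{thm:decayest}; finally, it proves \emph{a posteriori} that $R \leq \lambda_{\varepsilon,\rho,p}(u(t))/2$ by a dyadic decomposition together with the freshly obtained decay estimate, reaching a contradiction for $n$ large. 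You instead set the inner radius equal to (a multiple of) $\lambda_{\varepsilon,\rho,p}(u(t))$ from the start and verify the hypothesis (\ref{decayest:Ebound}) directly by a Caccioppoli-type estimate on annuli: integrating (\ref{pointwiseenergy}) against an annular cutoff gives three terms, each controlled by the definition of $\lambda$ (spatial), a parallel estimate on a cutoff for $B_\rho$ together with (\ref{maintechnical:assumption}) (temporal dissipation), and the trapezoidal constraint together with (\ref{maintechnical:rhosmallness}) (stress-energy flux). This eliminates both the auxiliary radius $R$ and the a-posteriori dyadic argument. What your approach buys is conceptual simplicity; what the paper's approach buys is that it packages the energy propagation through Theorem~\ref{thm:lambdaest}, which is reused elsewhere.

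Two small points of care that your sketch elides. First, for the temporal term you need the annular cutoff $\varphi$ dominated by the cutoff $\psi$ used in the parallel $B_\rho$-estimate, which forces $\supp\varphi \subset \{\psi \equiv 1\}$; with your choice $\rho_* = \rho/4$ the support of $\varphi$ reaches out to $2r \leq \rho$, so one should either shrink $\rho_*$ (say to $\rho/8$) or let $\psi \equiv 1$ on $B_{3\rho/4}$, with the constant in the final estimate adjusted accordingly—this is also why the constant $C(K)$ appears. Second, the constant $c$ in $R_* = c\lambda$ must be large enough that $r/2 > \lambda$ throughout the range, so that each dyadic sub-annulus of $U^{2r}_{r/4}$ falls under the control of the energy-scale definition (\ref{lambdadef}); $c = 4$ is the bare minimum. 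Both are routine constant-tracking matters and do not affect the validity of the argument. Your identification of (\ref{maintechnical:qsize}) as the condition forcing $\nu(\nu/\gamma - 1) \geq 1 - 1/q$ matches (\ref{maintechnical:qunugammabound}).
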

\begin{proof} Let $\gamma = \max \LB \frac12, \frac1q \RB.$ Then (\ref{maintechnical:qsize}) reads
$\gamma\left( 2 - \frac{1}{q} \right) \leq \nu^2,$
which implies
\begin{equation}\label{maintechnical:qunugammabound}
1 - \frac{1}{q} < 2 - \frac{1}{q} - \nu \leq \nu \left( \frac{\nu}{\gamma} - 1 \right).
\end{equation}
Let $n \geq K +1,$ to be chosen below depending on $K, q,$ and $\nu.$ We may assume that $\varepsilon_1 > 0$ is small enough that
$3 n \varepsilon_1 < \varepsilon_0,$ the constant of Theorem \ref{thm:decayest}, and $\sqrt{1 - C\sqrt{n\varepsilon_1}} \geq \nu,$ in order to satisfy (\ref{decayest:nurange}). 

Now, let $0 \leq R < \rho / 2$ be the minimal number such that
\begin{equation}\label{maintechnical:Rdef}
\sup_{t - \rho^2 \leq s \leq t} E(u(s), B_\rho ) \leq E(u(t), B_{R}) + 3 n \varepsilon.
\end{equation}
By Theorem \ref{thm:lambdaest}, we have
\begin{equation}\label{maintechnical:2}
\lambda_{3n \varepsilon, \rho, p} (u(s)) \leq \max \LB 4 R, \left( \frac{ C\sigma (t - s) }{ \varepsilon } \right)^{\frac{q}{2}} \RB
\end{equation}
for $t - \rho^2 \leq s \leq t.$ But, by (\ref{maintechnical:rhosmallness}), we have
\begin{equation*}
 \left( \frac{ C\sigma }{ \varepsilon } \right)^{\frac{q}{2}} \leq \rho^{1 - q},
\end{equation*}
and so (\ref{maintechnical:2}) implies
\begin{equation}\label{maintechincal:3}
\begin{split}
\lambda_{3n \varepsilon, \rho, p} (u(s)) & \leq \max \LB 4 R, \rho^{1 - q} \left( t - s \right)^{\frac{q}{2}} \RB \\
& = \max \LB 4 R, \rho \left( \frac{t - s}{\rho^2} \right)^{ \frac{q}{2} } \RB \\
& \leq \lambda_{\gamma}^{4R, \rho} (s - t + \rho^2),
\end{split}
\end{equation}
since $q/2 \geq 1 / 2\gamma.$ Here $\lambda_\gamma^{R, \rho}(\cdot)$ is defined by (\ref{lambdagammaRrhodef}).
Therefore (\ref{decayest:Ebound}) is satisfied, and we conclude from Theorem \ref{thm:decayest} that
\begin{equation}\label{maintechnical:4}
\begin{split}
r|du(t)| & \leq C_{q,\nu} \sqrt{n \varepsilon} \left( \left( \frac{4R}{r}\right)^{\nu} + \left( \frac{r}{\rho} \right)^{\nu \left( \frac{\nu}{\gamma} - 1 \right)} \right) + C \sqrt{ \sigma} r^{ 1 - \frac{1}{q} } \\
& \leq C_{q, \nu} \sqrt{n \varepsilon} \left( \left( \frac{R}{r}\right)^{\nu} + \left( \frac{r}{\rho} \right)^{1 - \frac1q} \right) + C \sqrt{ \sigma} r^{ 1 - \frac{1}{q} }
\end{split}
\end{equation}
for $4R \leq r \leq \rho / 2,$ where we have applied (\ref{maintechnical:qunugammabound}). But, again by (\ref{maintechnical:rhosmallness}), we have
$$\sqrt{\sigma} \leq \sqrt{\varepsilon} \rho^{-\frac{q-1}{q}}$$
and
\begin{equation}
\sqrt{\sigma} r^{1 - \frac1q} \leq \sqrt{\varepsilon} \left( \frac{r}{\rho} \right)^{1 - \frac1q}.
\end{equation}
Hence (\ref{maintechnical:4}) simplifies to
\begin{equation}\label{maintechnical:5}
\begin{split}
r|du(t)| & \leq C_{q,\nu} \sqrt{n \varepsilon} \left( \left( \frac{R}{r}\right)^{\nu} + \left( \frac{r}{\rho} \right)^{1 - \frac1q} \right).
\end{split}
\end{equation}
Finally, we claim that $R \leq \lambda_{\varepsilon, \rho , p}(u(t)) / 2$ for the appropriate choice of $n.$ Assuming the contrary, we must have $R > 0,$ $\lambda = \lambda_{\varepsilon, \rho , p}(u(t)) \leq \rho / 2,$ and equality in (\ref{maintechnical:Rdef}):
\begin{equation*}
\sup_{t - \rho^2 \leq s \leq t} E(u(s), B_\rho ) = E \left( u(t), B_{R} \right) + 3 n \varepsilon.
\end{equation*}
Then from (\ref{maintechnical:assumption}), we have
\begin{equation*}
E(u(t), B_{R}) + 3 n \varepsilon \leq E \left( u(t), B_{\rho / 2} \right) + K \varepsilon
\end{equation*}
and
\begin{equation}\label{maintechnical:tobecontradicted}
 (3 n - K) \varepsilon \leq E \left( u(t), U_R^{\rho / 2} \right).
\end{equation}
But, since $\lambda < 2R,$ we have
\begin{equation}\label{maintechnical:6}
E(u(t), U_R^{2^nR} \cup U_{2^{-n} \rho }^{\rho}) < 2 n \varepsilon.
\end{equation}
By (\ref{maintechnical:5}), we also have
\begin{equation*}
\begin{split}
E \left( u(t), U_{2^nR}^{2^{-n} \rho} \right) & \leq C_{q, \nu} n \varepsilon \int_{2^n R}^{2^{-n} \rho} \left( \left( \frac{R}{r}\right)^{\nu} + \left( \frac{r}{\rho} \right)^{1 - \frac1q} \right)^2 \, \frac{dr}{r} \\
& \leq C_{q, \nu} n \varepsilon 2^{- \left( 1 - \frac1q \right) n}.
\end{split}
\end{equation*}
Choosing $n$ sufficiently large (depending on $q$ and $\nu$), we obtain
\begin{equation}\label{maintechnical:7}
E \left( u(t), U_{2^n R}^{2^{-n} \rho}  \right) \leq \varepsilon.
\end{equation}
Combining (\ref{maintechnical:tobecontradicted}), (\ref{maintechnical:6}), and (\ref{maintechnical:7}), we have
\begin{equation*}
\begin{split}
(3n - K) \varepsilon & \leq E \left( u(t), U_R^{\rho/2} \right) \\
& = E(u(t), U_R^{2^nR} \cup U_{2^{-n} \rho }^{\rho}) + E \left( u(t), U_{2^n R}^{2^{-n} \rho}  \right) \\
& < 2n \varepsilon + \varepsilon = (2n + 1) \varepsilon.
\end{split}
\end{equation*}
Since $n \geq K + 1,$ this is a contradiction.

We have established that $R \leq \lambda_{\varepsilon, \rho, p} (u(t)) / 2,$ and the desired estimate (\ref{maintechnical:estimate}) now follows from (\ref{maintechnical:5}). 
\end{proof}

\begin{cor}\label{cor:delta0} Given $q > 1,$ there exists $\eps_2 > 0,$ depending on $q$ and the geometry of $N,$ as well as $R_0 > 0,$ depending on the geometry of $\Sigma,$ as follows.

Let $\sigma > 0,$ $0 < \eps < \eps_2,$ and $\rho>0$ satisfying (\ref{maintechnical:rhosmallness}). 
Suppose that (\ref{stressbound}) is satisfied on $B_\rho,$ together with 
\begin{equation}\label{delta0:assumption}
\int_\tau^T \!\!\!\!\! \int_{B_\rho} | \mathcal{T}(u(s)) |^2 \, dV ds \leq \frac{c \varepsilon^2 }{E(u(\tau))}.
\end{equation}
Then for $\tau + \rho^2 \leq t < T$ and $2\lambda_{\varepsilon, \rho , p}(u(t)) \leq r \leq \rho/2,$ we have 
\begin{equation*}
r|du(x,t)| \leq C_{q} \sqrt{\varepsilon} \left(\frac{\lambda_{\varepsilon, \rho , p}(u(t)) }{r}  + \frac{r}{\rho} \right)^{ 1 - \frac{1}{q}}.
\end{equation*}
\end{cor}
\begin{proof} Fixing $K = 3$ and $\nu = \sqrt{ \max \LB \frac{1}{2}, \frac{1}{q} \RB \cdot \frac{2q - 1}{q} }  > 1 - \frac1q,$ we may take $\eps_2 = \eps_1$ from the previous theorem.
It suffices to show that (\ref{delta0:assumption}) implies (\ref{maintechnical:assumption}), which can be done by the following standard argument.

Fix $t \in \LB \tau + \rho^2, T \right).$ We may assume without loss of generality that $\lambda_{\varepsilon, \rho , p} (u(t)) \leq \rho/4,$ otherwise the claim is vacuous. 
This implies
\begin{equation}\label{lambdarho/2}
E \left( u(t), U^\rho_{\rho/4} \right) < 2\varepsilon.
\end{equation}
Let $\varphi$ be a cutoff for $B_{\rho/2} \subset B_{\rho}.$ Integrating once against $\varphi$ in (\ref{pointwiseenergy}), and inserting (\ref{divS}), we obtain
\begin{equation*}
\frac12 \frac{d}{dt} \int \varphi | \nabla u|^2 \, dV + \int \varphi |\mathcal{T}(u)|^2 \, dV = \int \LA \mathcal{T}(u) , \nabla \varphi \cdot \nabla u \RA.
\end{equation*}
Given $t - \rho^2 \leq s \leq t,$ integrating from $s$ to $t$ and using H\"older's inequality, we obtain 
\begin{equation}\label{holdercrap}
\begin{split}
\left| \int \varphi \left(| \nabla u(s)|^2 - |\nabla u(t) |^2 \right) \, dV \right| & \leq \frac{c \varepsilon^2 }{E(u(\tau))} + \frac{C}{\rho} \frac{\sqrt{c}\, \varepsilon}{\sqrt{E(u(\tau))}} \sqrt{\rho^2E(u(\tau))} \\
& \leq \frac{c \varepsilon^2}{E(u(\tau))} + C \sqrt{c} \varepsilon.
\end{split}
\end{equation}
We may assume that $E(u(\tau)) \geq \varepsilon$ without loss of generality (otherwise the estimate follows from Theorem \ref{thm:epsilonreg}), 
and $c$ is sufficiently small. Inserting (\ref{lambdarho/2}), 
(\ref{holdercrap}) becomes
\begin{equation*}
\begin{split}
\int_{B_{\rho / 2}} |\nabla u(s) |^2  \, dV \leq \int \varphi |\nabla u(s) |^2  \, dV  & \leq \int \varphi | \nabla u(t)|^2 \, dV + \varepsilon \leq \int_{B_{\rho / 4}} | \nabla u(t)|^2 \, dV + 3 \varepsilon.
\end{split}
\end{equation*}
This gives (\ref{maintechnical:assumption}), with $K = 3$ and $\rho / 2$ in place of $\rho,$ which is sufficient.
\end{proof}

\begin{cor}\label{cor:stressboundedholdercont} 
If $u$ satisfies (\ref{stressbound}) for $T < \infty,$ with $q > 1,$ then the body map $u(T) = \lim_{t \nearrow T} u(t)$ is $C^{1 - \frac{1}{q}}.$
\end{cor}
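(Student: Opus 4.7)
The plan is to combine the decay estimate of Corollary~\ref{cor:delta0} with the energy-scale bound of Corollary~\ref{cor:lambdatozero} and then pass to the pointwise limit $t \nearrow T.$ At each singular point $p$ of $u(T),$ this will produce a bound of the form $|du(T)(x)| \le C|x-p|^{-1/q}$ on a punctured neighborhood, which is integrable along paths since $q > 1,$ yielding the desired H\"older regularity.

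First I would verify that the hypothesis (\ref{delta0:assumption}) of Corollary~\ref{cor:delta0} can be arranged. Fix $0 < \varepsilon < \varepsilon_1$ once and for all. Since $E(u(t))$ is non-increasing by (\ref{globalenergyidentity}) and bounded below, it has a finite limit as $t \nearrow T,$ so for any $c_0 > 0$ there exists $\tau < T$ with $E(u(\tau)) - E(u(t)) \le c_0$ for all $\tau \le t < T.$ Taking $c_0 = c \varepsilon^2 / E(u(0))$ puts (\ref{delta0:assumption}) in force on $[\tau, T),$ since $E(u(\tau)) \le E(u(0)).$

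Standard Struwe theory together with the $\varepsilon$-regularity Theorem~\ref{thm:epsilonreg} shows that the body map is well-defined and smooth away from a finite singular set $\Sigma_0 \subset \Si,$ with $u(t) \to u(T)$ smoothly on compact subsets of $\Si \setminus \Sigma_0.$ Fix $p \in \Sigma_0,$ and choose $\rho > 0$ satisfying (\ref{maintechnical:rhosmallness}) and small enough that $B_\rho(p) \cap \Sigma_0 = \{p\}.$ Corollary~\ref{cor:delta0} with $k = 0$ yields, for $\tau + \rho^2 \le t < T$ and $2 \lambda_{\varepsilon, \rho, p}(u(t)) \le |x - p| \le \rho/2,$ the estimate
\[ |du(x,t)| \le \frac{C \sqrt{\varepsilon}}{|x - p|} \left( \frac{\lambda_{\varepsilon, \rho, p}(u(t))}{|x-p|} + \frac{|x-p|}{\rho} \right)^{1 - 1/q}. \]
By Corollary~\ref{cor:lambdatozero}, $\lambda_{\varepsilon,\rho,p}(u(t)) \to 0$ as $t \nearrow T,$ so for each fixed $x \ne p$ the smooth convergence $u(t) \to u(T)$ near $x$ allows the $\lambda/|x-p|$ term to be dropped in the limit, leaving
\[ |du(T)(x)| \le C_\rho\, |x - p|^{-1/q} \qquad (0 < |x - p| \le \rho/2). \]

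Finally, since $1 - 1/q > 0,$ this pointwise bound is integrable along paths. Integrating along the radial segment approaching $p$ shows that $u(T)$ is Cauchy as $x \to p,$ extends continuously by setting $u(T)(p) := \lim_{x \to p} u(T)(x),$ and satisfies $|u(T)(x) - u(T)(p)| \le C |x-p|^{1 - 1/q}.$ A standard dichotomy (whether $|x - y|$ is much smaller than $\max(|x-p|, |y-p|)$ or comparable to it) upgrades this to $|u(T)(x) - u(T)(y)| \le C |x - y|^{1 - 1/q}$ for $x, y$ in a neighborhood of $p.$ Combined with smoothness of $u(T)$ on the complement of the finite set $\Sigma_0,$ this yields $u(T) \in C^{1 - 1/q}(\Si).$ The main point to watch for is the legitimacy of the pointwise limit $t \nearrow T$ in the derivative bound; this relies on the smooth convergence $u(t) \to u(T)$ off $\Sigma_0,$ which is precisely what Theorem~\ref{thm:epsilonreg} furnishes once the energy density is uniformly bounded off a small ball about each singular point.
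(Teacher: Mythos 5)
Your proof is correct and follows exactly the paper's approach: advance $\tau$ so that (\ref{delta0:assumption}) holds, then combine Corollary~\ref{cor:delta0} (the decay estimate) with Corollary~\ref{cor:lambdatozero} ($\lambda \to 0$). You have simply filled in the standard details (dropping the $\lambda/r$ term in the limit, integrating the resulting $r^{-1/q}$ bound, and the dyadic dichotomy for full H\"older continuity) that the paper's two-line proof leaves implicit.
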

\begin{proof} Let $\eps = \eps_2 /2,$ and fix any point $p \in \Sigma.$ Since $E(u(t))$ is is continuous and decreasing, we may choose $\tau < T$ such that
$$E(u(\tau)) - \lim_{s \nearrow T} E(u(s)) \leq \frac{c \varepsilon^2}{E(u(\tau))}.$$
By the global energy identity (\ref{globalenergyidentity}), this implies
$$\int_\tau^T \!\!\!\!\! \int_{\Sigma} | \mathcal{T}(u(s)) |^2 \, dV ds \leq \frac{c \varepsilon^2 }{E(u(\tau))},$$
which guarantees (\ref{delta0:assumption}).

Now, by Corollary \ref{cor:lambdatozero}, we may choose choose $0 < \rho < \sqrt{T - \tau}$ such that
$$\lambda_{\eps, \rho, p}(u(t)) \to 0$$
as $t \nearrow T;$ in particular, $B_\rho(p) \setminus \{p\}$ does not contain any singular points. Taking $\rho$ still smaller, we may assume (\ref{maintechnical:rhosmallness}).

We can now apply the prevous corollary, to obtain
\begin{equation*}
r| du(x,t)| \leq C_{q,N} \left(\frac{\lambda_{\varepsilon, \rho , p}(u(t)) }{r}  + \frac{r}{\rho} \right)^{ 1 - \frac{1}{q}}.
\end{equation*}
Taking the limit $t \nearrow T$ on $B_\rho(p) \setminus \{ p\},$ we have
\begin{equation}\label{holderwithstressbound}
r| du(x,T)| \leq C_{q,N} \left( \frac{r}{ \rho} \right)^{1 - \frac{1}{q} } .
\end{equation}
H{\"o}lder continuity now follows from (\ref{holderwithstressbound}).
\end{proof}

\vspace{5mm}

\subsection{H{\"o}lder continuity assuming non-concentration of (anti-)holomorphic energy}\label{ss:holdernonconc}

We now repeat the arguments of the previous subsection under our hypotheses in the compact K{\"a}hler case, obtaining H{\"o}lder continuity for all exponents less than one.

\begin{thm}\label{thm:lowdbarmaintechnical} Suppose that $N$ is compact K\"ahler with nonnegative holomorphic bisectional curvature. Given $0 < \nu < 1,$
there exists $\varepsilon_3 > 0,$ depending on $\nu$ and the geometry of $N,$ as well as $R_0 > 0,$ depending on the geometry of $\Sigma,$ as follows.

Let $0 < \varepsilon < \varepsilon_3,$ $0 < \delta < \delta_0,$ $0 \leq \tau < T,$ and $0 < 2 \rho \leq\rho_1 < \min \LB R_0, \sqrt{T - \tau} \RB$ with
\begin{equation}\label{lowdbarmaintechnical:rhosmallness}
\rho < \frac{\varepsilon \rho_1}{C \sqrt{\delta E(u(\tau)) }}.
\end{equation}
Suppose that
\begin{equation}\label{lowdbarmaintechnical:assumption1}
\sup_{\tau \leq t < T} E_{\dbar} (u(t), B_{\rho_1} ) \leq \delta < \delta_0
\end{equation}
and
\begin{equation}\label{lowdbarmaintechnical:assumption2}
\int_\tau^T \!\!\!\!\! \int_{B_{\rho_1}} | \mathcal{T}(u(s)) |^2 \, dV ds \leq \frac{c \eps^2}{E(\tau)}.
\end{equation}
Then for $2\lambda_{\varepsilon, \rho , p}(t) \leq r \leq \rho/2,$ 
$u(t)$ satisfies
\begin{equation}\label{lowdbarmaintechnical:estimate}
r|du(x,t)| \leq C_{\nu} \sqrt{\varepsilon} \left( \left( \frac{\lambda_{\varepsilon, \rho , p}(u(t)) }{r} \right)^\nu + \left( \frac{r}{\rho} \right)^{\nu(2\nu - 1) } \right).
\end{equation}
\end{thm}

\begin{proof} We may assume without loss of generality that $\nu \geq \frac{\sqrt{3}}{2} ,$ and let $q = 2,$ $\gamma = \frac12$ in the previous arguments.
By the proof of Corollary \ref{cor:delta0}, we know that (\ref{lowdbarmaintechnical:assumption2}) implies (\ref{maintechnical:assumption}).
By Corollary \ref{cor:nnhbscblowuprate}, we may let $\sigma = \frac{C \sqrt{\delta E(u(\tau))} }{\rho_1},$ so that (\ref{lowdbarmaintechnical:rhosmallness}) implies (\ref{maintechnical:rhosmallness}).

We can now rerun the proof of Theorem \ref{thm:maintechnical}, using Theorems \ref{thm:e''bound} and \ref{thm:dbaruboundeddecayest} in place of Theorem \ref{thm:decayest} after (\ref{maintechincal:3}). Instead of (\ref{maintechnical:4}), we obtain
\begin{equation}\label{lowdbarmaintechnical:4}
\begin{split}
r|du(t)| & \leq C_{q,\nu} \sqrt{n \varepsilon} \left( \left( \frac{4R}{r}\right)^{\nu} + \left( \frac{r}{\rho} \right)^{\nu \left(2\nu - 1 \right)} \right) + C \sqrt{ \delta} r
\end{split}
\end{equation}
for $4R \leq r \leq \rho / 2.$ Assuming without loss that $E(u(\tau)) \geq \eps,$ (\ref{lowdbarmaintechnical:assumption1}) gives
\begin{equation*}
\rho \leq \frac{\sqrt{\eps}}{C\sqrt{\delta}}
\end{equation*}
and
\begin{equation*}
\sqrt{\delta} < \frac{\sqrt{\eps} }{\rho}.
\end{equation*}
So (\ref{lowdbarmaintechnical:4}) simplifies to
\begin{equation}\label{lowdbarmaintechnical:5}
\begin{split}
r|du(t)| & \leq C_{q,\nu} \sqrt{n \varepsilon} \left( \left( \frac{R}{r}\right)^{\nu} + \left( \frac{r}{\rho} \right)^{\nu \left(2\nu - 1 \right) } \right).
\end{split}
\end{equation}
The rest of the argument proceeds as before, using (\ref{lowdbarmaintechnical:5}) in place of (\ref{maintechnical:5}).
\end{proof}

\begin{cor}\label{cor:nojointconcentration}
Assume that $N$ has nonnegative holomorphic bisectional curvature, and that for every $p \in \Sigma,$ either
\begin{equation}\label{nojoinconcentration:assumption}
\lim_{\rho \searrow 0} \limsup_{t \nearrow T} E_{\bp} (u(t), B_\rho (p)) < \delta_0 \,\,\, \text{   or   } \,\,\, \lim_{\rho \searrow 0}  \limsup_{t \nearrow T} E_\p(u(t), B_\rho(p)) < \delta_0.
\end{equation}
Then $u(T)$ is $C^\mu$ for each $\mu < 1.$ 

\end{cor}
\begin{proof} Given $\mu < 1,$ choose $\nu < 1$ so that $\nu (2 \nu - 1) = \mu,$ and let $\eps = \eps_3 / 2.$
Fix $p \in \Sigma,$ and suppose without loss of generality that
$$\lim_{\rho \searrow 0} \limsup_{t \nearrow T} E_{\bp} (u(t), B_\rho (p)) < \delta_0.$$
Then for $\rho_1 > 0$ sufficiently small, (\ref{lowdbarmaintechnical:assumption1}) holds for $\tau$ sufficiently close to $T.$
The rest of the proof proceeds as in Corollary \ref{cor:stressboundedholdercont}.
\end{proof}

\begin{rmk} By the standard bubble-tree decomposition and energy identity at a finite-time singularity \cite{Qing1995, DingTian1995, Wang1996}, (\ref{nojoinconcentration:assumption}) is equivalent to the assumption that only holomorphic or antiholomorphic bubbles appear at each singular point. This hypothesis is familiar from Topping's second convergence theorem \cite{Topping2004-annmath}.
\end{rmk}

\vspace{5mm}

\subsection{Bubble-tree convergence} We can now state the bubble-tree results that follow from our main theorems. The arguments are by now standard (see Ding-Tian \cite{DingTian1995}, Parker \cite{Parker1996}, or Qing-Tian \cite{QingTian1997}), once one establishes the following key estimates on the energy and oscillation in the neck region (cf. Lin-Wang \cite{LinWang1998}, Lemma 3.1).

\begin{lemma}\label{lemma:noneck} Let $0 < \rho_i < R_0,$ $i = 1, \ldots, \infty,$ be any sequence of numbers and $p_i \in \Sigma$ any sequence of points. 
Let $u_i : \Sigma \times \LB 0, T \right) \to N$ be a sequence of solutions of harmonic map flow with uniformly bounded energy. Suppose that there exists $0 < \tau < T$ such that a uniform stress-energy bound of the form (\ref{stressbound}) holds, with $q > 1,$ as well as
\begin{equation}\label{noneck:delta0assn}
\int_{\tau}^T \!\!\!\!\! \int_\Sigma |\T(u_i) |^2\, dV dt < \frac{c \varepsilon_2^2}{E(u_i(\tau))}
\end{equation}
for each $i,$ where $\varepsilon_2 > 0$ is the constant of Corollary \ref{cor:delta0}. 
Given any sequence of times $t_i \nearrow T,$ let $\lambda_i = \lambda_{\varepsilon_1,\rho_i, p_i}(u(t_i)),$ and assume that $\lambda_i / \rho_i \to 0 \mbox{ as } i\to \infty.$
Then we have
\begin{equation}\label{noneck:Eest}
\lim_{\beta \searrow 0} \lim_{\alpha \to \infty} \lim_{i \to \infty} E( u_i(t_i), U_{\alpha \lambda_i}^{\beta \rho_i}(p_i) ) = 0
\end{equation}
and
\begin{equation}\label{noneck:Oscest}
\lim_{\beta \searrow 0} \lim_{\alpha \to \infty} \lim_{i \to \infty} \mbox{osc}_{U_{\alpha \lambda_i}^{\beta \rho_i}(p_i) } u_i(t_i) = 0.
\end{equation}
\end{lemma}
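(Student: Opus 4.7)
The plan is to apply Corollary \ref{cor:delta0} uniformly to each $u_i$ at time $t_i,$ and then integrate the resulting pointwise decay estimate to obtain both (\ref{noneck:Eest}) and (\ref{noneck:Oscest}).

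First I would verify that the hypotheses of Corollary \ref{cor:delta0} hold uniformly in $i.$ The stress-energy bound (\ref{stressbound}) is assumed. For the ``spare time'' hypothesis (\ref{delta0:assumption}), fix $\varepsilon < \varepsilon_1$ and take $\delta_0 \leq c\varepsilon^2/E_0,$ where $E_0$ is the uniform bound on the energy; then (\ref{noneck:delta0assn}) implies (\ref{delta0:assumption}) for every $u_i.$ Next, replace $\rho_i$ by $\rho := \min\{\rho_i, \rho_0\}$ with $\rho_0$ small enough to satisfy (\ref{maintechnical:rhosmallness}) uniformly in $i.$ The associated energy scale $\lambda'_i := \lambda_{\varepsilon,\rho,p_i}(u_i(t_i))$ is bounded above by $\lambda_i,$ and since $\rho_i$ is uniformly bounded the hypothesis $\lambda_i/\rho_i \to 0$ forces $\lambda_i \to 0,$ so $2\lambda'_i \leq \rho/2$ for large $i.$ Corollary \ref{cor:delta0} then yields
\begin{equation*}
r\,|du_i(x,t_i)| \;\leq\; C\sqrt{\varepsilon}\left(\frac{\lambda'_i}{r} + \frac{r}{\rho}\right)^{1-1/q} \qquad \text{for } 2\lambda'_i \leq r = |x-p_i| \leq \rho/2.
\end{equation*}

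Integrating $|du_i|^2\,r\,dr\,d\theta$ in geodesic polar coordinates over $U^{\beta\rho}_{\alpha\lambda_i}(p_i)$ (and using $\lambda'_i \leq \lambda_i$) gives
\begin{equation*}
E\bigl(u_i(t_i),\,U^{\beta\rho}_{\alpha\lambda_i}(p_i)\bigr) \;\leq\; C\varepsilon \int_{\alpha\lambda_i}^{\beta\rho} \left(\left(\tfrac{\lambda_i}{r}\right)^{2(1-1/q)} + \left(\tfrac{r}{\rho}\right)^{2(1-1/q)}\right) \frac{dr}{r} \;\leq\; C_q\,\varepsilon\,\bigl(\alpha^{-2(1-1/q)} + \beta^{2(1-1/q)}\bigr),
\end{equation*}
since $q > 1,$ which vanishes in the iterated limit $i \to \infty,$ $\alpha \to \infty,$ $\beta\searrow 0.$ The residual piece on $U^{\beta\rho_i}_{\beta\rho}$ (when $\rho < \rho_i$) contains only a bounded amount of energy and is rendered small by a standard $\varepsilon$-regularity covering argument once $\beta$ is small, yielding (\ref{noneck:Eest}).

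For the oscillation estimate (\ref{noneck:Oscest}), I would connect any two points in the annulus by a concatenation of a radial segment with a circular arc, and control each contribution separately. The radial oscillation is bounded by the line integral of $|du_i|,$ which by the pointwise bound satisfies
\begin{equation*}
\int_{\alpha\lambda_i}^{\beta\rho} |du_i|\,dr \;\leq\; C\sqrt{\varepsilon}\int_{\alpha\lambda_i}^{\beta\rho} \frac{1}{r}\left(\tfrac{\lambda_i}{r} + \tfrac{r}{\rho}\right)^{1-1/q}\!dr \;\leq\; C_q\sqrt{\varepsilon}\,\bigl(\alpha^{-(1-1/q)} + \beta^{1-1/q}\bigr);
\end{equation*}
the oscillation along a circle of radius $r$ is bounded by $\sqrt{2\pi}\,f(u_i;r,t_i)$ via Cauchy-Schwarz, and since $|u_\theta| \leq r\,|du_i|,$ this in turn is at most $C\sqrt{\varepsilon}(\lambda_i/r + r/\rho)^{1-1/q} \leq C\sqrt{\varepsilon}(\alpha^{-1} + \beta)^{1-1/q}$ uniformly on the annulus. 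Both contributions vanish in the iterated limit, giving (\ref{noneck:Oscest}). The main obstacle will be the bookkeeping required to pass between the original $\rho_i$ and a uniform base radius $\rho$ satisfying (\ref{maintechnical:rhosmallness}), and to verify that the associated energy scales are comparable; this is essentially monotonicity of $\lambda_{\varepsilon,\rho,p}$ in $\rho,$ combined with a routine $\varepsilon$-regularity argument to absorb the residual annulus $U^{\beta\rho_i}_{\beta\rho},$ but requires care in view of the fact that $\rho_i$ itself is allowed to vary with $i.$
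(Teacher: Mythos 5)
Your approach matches the paper's, which disposes of this lemma in a single sentence: invoke Corollary~\ref{cor:delta0} to get the pointwise decay
$r\,|du_i(x,t_i)| \leq C\sqrt{\varepsilon}\bigl(\lambda_i/r + r/\rho_i\bigr)^{1-1/q}$ on the neck, then integrate. Your write-up supplies the bookkeeping the paper omits (uniform verification of the hypotheses of Corollary~\ref{cor:delta0}, the energy integral, the radial-plus-angular decomposition of the oscillation), and those computations are correct.

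The one step that deserves scrutiny is the claim that $\lambda'_i := \lambda_{\varepsilon,\rho,p_i}(u_i(t_i)) \leq \lambda_i := \lambda_{\varepsilon_1,\rho_i,p_i}(u_i(t_i)).$ From the definition (\ref{lambdadef}), the energy scale $\lambda_{\varepsilon,\rho,p}$ is nondecreasing in $\rho$ (a larger outer radius means more annuli to control) and nonincreasing in $\varepsilon$ (a larger threshold is easier to meet). Passing from $\rho_i$ to the smaller $\rho$ pushes the scale down, but passing from $\varepsilon_1$ to the strictly smaller $\varepsilon$ required by Corollary~\ref{cor:delta0} pushes it up, so the inequality you assert does not follow from monotonicity alone. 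You use it essentially, both to guarantee that $[\alpha\lambda_i,\beta\rho]$ lies inside the corollary's domain of validity $[2\lambda'_i,\rho/2]$ once $\alpha\geq 2$ and $i$ is large, and to replace $\lambda'_i$ by $\lambda_i$ inside the integrand so that the inner endpoint $\alpha\lambda_i$ produces the factor $\alpha^{-(1-1/q)}.$ This wrinkle is present in the paper as well, implicitly: its quoted estimate already substitutes $\lambda_i$ where Corollary~\ref{cor:delta0} literally gives $\lambda_{\varepsilon,\rho_i,p_i}$ for some $\varepsilon<\varepsilon_1.$ The clean resolution, consistent with how Lemma~\ref{lemma:noneck} is invoked inside the proof of Theorem~\ref{thm:bubbletree}, is simply to define $\lambda_i$ throughout using the same $\varepsilon<\varepsilon_1$ that is fed into Corollary~\ref{cor:delta0}; nothing else in your argument needs to change, and the residual annulus $U^{\beta\rho_i}_{\rho/2}$ that you were worried about is in fact empty once $\beta < \rho_0/(2R_0),$ so the covering argument at the end of your energy estimate is unnecessary.
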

\begin{proof} By Corollary \ref{cor:delta0}, we have
\begin{equation}\label{noneck:copiedest}
r| du_i(x,t_i) | \leq C \left( \frac{\lambda_i}{r} + \frac{r}{\rho_i} \right)^{1 - \frac{1}{q}}
\end{equation}
for $ 2\lambda_i \leq |x| \leq \rho_i / 2.$ 
Then (\ref{noneck:Eest}-\ref{noneck:Oscest}) follow by integrating (\ref{noneck:copiedest}).
\end{proof}

\begin{thm}\label{thm:bubbletree} Let $u_i : \Sigma \times \LB 0, T \right) \to N,$ where $0 < T \leq \infty,$ be a sequence of solutions of harmonic map flow with uniformly bounded energy, and suppose that there exists $0 < \tau < T$ such that (\ref{stressbound}) holds, with $q > 1$ and a uniform constant. 
Suppose further that
\begin{equation}\label{bubbletree:tensionassn}
\lim_{t \nearrow T} \limsup_{i \to \infty} \int_{t}^T \!\!\!\!\! \int_\Sigma |\T(u_i) |^2\, dV ds = 0.
\end{equation}
Given any sequence of times $t_i \nearrow T,$ there exists a subsequence (again labeled $i$) such that $u_i(t_i)$ converges to a $C^{1 - \frac{1}{q}}$ weak limit $u_\infty: \Sigma \to N,$ smoothly away from a finite set 
$S \subset \Sigma.$ For each $p \in S,$ there exist finitely many harmonic maps $\phi_k : S^2 \to N$ such that the following energy identity holds:
\begin{equation}\label{energyidentity}
\lim_{r \searrow 0} \lim_{i \to \infty} E(u_i(t_i), B_r(p)) = \sum_k E(\phi_k).
\end{equation}
Moreover, there are no necks, {\it i.e.}, $\cup_k \phi_k(S^2)$ is connected and contains $\lim_{x \to p} u_\infty(x).$ 
\end{thm}
\begin{proof} (\emph{Sketch}) 
It is most natural to use the ``outside-in'' argument of Parker \cite{Parker1996}.

In view of the parabolic estimates of Theorem \ref{thm:epsilonreg}, after passing to a subsequence, we may assume that $u_i(t_i)$ converges smoothly to $u_\infty$ 
away from a finite set $S$, and the limit
\begin{equation}\label{E0defn}
E_0 = \lim_{r \searrow 0} \lim_{i \to \infty} E(u_i(t_i), B_r (p)) \geq \varepsilon_0
\end{equation}
exists at each $p \in S.$ In view of (\ref{bubbletree:tensionassn}), after advancing $\tau,$ we can assume that (\ref{noneck:delta0assn}) holds. By the argument of Lemma \ref{lemma:lambdazero}, after again passing to a subsequence, we may choose $\rho > 0$ such that $\lambda_i^\circ = \lambda_{\varepsilon_2, \rho, p}(u_i(t_i)) \to 0.$ Since (\ref{noneck:delta0assn}) implies (\ref{delta0:assumption}), we may conclude as in the proof of Corollary \ref{cor:stressboundedholdercont} that $u_\infty$ is $C^{1 - \frac{1}{q}}.$ 

Now, in geodesic coordinates where $p = 0,$ we may let $q_i$ be the ``center-of-mass'' of $e(u_i(t_i))$ over the ball $B_{\lambda^\circ_i}(0),$ and put
\begin{equation*}
\lambda_i = \lambda_{\varepsilon_2, \rho/2, q_i}(u_i(t_i)).
\end{equation*}
It follows from Corollary \ref{cor:delta0} and (\ref{E0defn}) that $\lambda_i> 0$ for each $i,$ and $\lambda_i \leq C \lambda_i^\circ.$

Consider the rescaled sequence
\begin{equation}
v_i(x) = u_i(q_i + \lambda_i x, t_i).
\end{equation}
Then $v_i(x)$ is defined for $x \in B_{\rho/2 \lambda_i }(0) \subset \R^2,$ and satisfies
\begin{equation}\label{lambdaequalsone}
\lambda_{\varepsilon_1,\rho/2 \lambda_i , 0}(v_i) = 1.
\end{equation}
The sequence $v_i$ approaches a weak limit $\phi_1 : \R^2\setminus \{p'_1, \cdots, p'_n\} \to N,$ where $p_i' \in \bar{B}_1$ by (\ref{lambdaequalsone}). Since $\lambda_i \to 0,$ 
the assumption (\ref{bubbletree:tensionassn}) and the estimate (\ref{epsilonreg:tensionfield}) imply that $\phi_1$ is a harmonic map. Hence, by $\eps$-regularity (applied to $\phi_1$ on large balls in $\R^2$), either $E(\phi_1, \R^2) \geq \varepsilon_0$ or $\phi_1$ is constant.

We claim that if $\phi_1$ is constant, then there must be at least two distinct points in the singular set. Assume for contradiction that there is only one such point $p'_1.$ Then since $\phi_1$ is constant, all of the energy must concentrate at $p'_1$ as $i \to \infty;$ since the origin is the center-of-mass, we must have $p'_1 = 0.$ But then we clearly have $E(v_i, U_{1/4}^1(0)) < \varepsilon_1$ for $i$ sufficiently large, which implies that $\lambda_{\varepsilon_1, \rho/2 \lambda_i , 0}(v_i) < 1.$ This contradicts (\ref{lambdaequalsone}).

We are left with two scenarios: either $\phi_1$ is nonconstant, with energy at least $\varepsilon_0,$ or there are at least two distinct points in the bubbling set of $\{v_i\},$ each consuming energy at least $\varepsilon_0.$ By Lemma \ref{lemma:noneck}, we have
\begin{equation}\label{protoenergy}
E_0 = E(\phi_1) + \sum_j \lim_{r \searrow 0} \lim_{i \to \infty} E(v_i, B_r(p'_j) )
 \end{equation}
 and
 \begin{equation}\label{limutiphi}
 \lim_{q \to p} u(t_i, q) = \lim_{x \to \infty} \phi_1(x).
 \end{equation}
In either case, for each $p'_j,$ we must have
\begin{equation}\label{energygoesdown}
\lim_{r \searrow 0} \lim_{i \to \infty} E(v_i, B_r(p'_j) ) < E_0 - \varepsilon_0.
\end{equation}
We can now rescale around each point $p'_j$ in the same fashion, and repeat the procedure. In view of (\ref{energygoesdown}), the amount of concentrated energy at each point goes down by at least $\varepsilon_0,$ therefore the process must terminate in finitely many steps. The identity (\ref{protoenergy}) yields the energy identity \ref{energyidentity}), and (\ref{limutiphi}) yields the no-neck property.
\end{proof}

\begin{cor}\label{cor:noneck} Let $u: \Sigma \times \LB 0, T \right) \to N,$ where $T \leq \infty,$ be a solution of harmonic map flow which is smooth for $0 < t < T.$ Assume that a uniform stress-energy bound (\ref{stressbound}) holds, with $q > 1.$ 
Given any sequence of times $t_i \nearrow T,$ the maps $u(t_i)$ converge along a subsequence in the bubble-tree sense with no necks, where the body map is H\"older continuous and the bubble maps are harmonic. 
If $T = \infty,$ the body map is harmonic. 
\end{cor}
\begin{proof} This follows by letting $u_i = u$ in the previous theorem, 
in which case (\ref{bubbletree:tensionassn}) follows from the global energy identity (\ref{globalenergyidentity}). 
The fact that $u_\infty$ is harmonic for $T = \infty$ (without any assumptions) follows from the global energy identity and (\ref{epsilonreg:tensionfield}) by a standard argument, which we omit. 
\end{proof}

\vspace{5mm}

\subsection{Proof of Theorem \ref{thm:main}} Let $u : \Sigma \times \LB 0, \infty \right) \to N$ be a Struwe solution of (\ref{e:hm}). As observed above, (\ref{globalenergyidentity}) and (\ref{Esplitting}) imply that $E_{\bp}(u(t))$ is decreasing along the flow; this remains true through finite-time singularities, since $u(T)$ is a strong limit away from a set of measure zero. 
Hence, we have
\begin{equation}\label{Edbarsmallt}
E_\dbar(u(t)) < \delta_0
\end{equation}
for all $0 \leq t < \infty.$ H{\"o}lder continuity of the body maps now follows from Corollary \ref{cor:nojointconcentration}. By Corollary \ref{cor:nnhbscblowuprate}, (\ref{Edbarsmallt}) implies (\ref{stressbound}) with $q = 2,$ hence the bubble-tree and no-neck properties follow from Corollary \ref{cor:noneck}.


\begin{thebibliography}{20}

\bibitem{AHM2009}
S. B. Angenent, J. Hulshof, and H. Matano, {\em The radius of vanishing bubbles in equivariant harmonic map flow from $D^2$ to $S^2$}. SIAM J. Math. Anal. 41, no. 3, 1121-1137 (2009).


\bibitem{BairdEells1964}
P. Baird and J. Eells. \emph{A conservation law for harmonic maps.} Geometry Symposium Utrecht 1980, Springer (1981).

\bibitem{BHK2003}
J. B. van den Berg, J. Hulshof, and J. R. King, {\em Formal asymptotics of bubbling in the harmonic map heat flow.} SIAM J. Appl. Math., 63, pp. 1682-1717 (2003).



\bibitem{CDY1992}
K. Chang, W. Ding and R. Ye, \emph{Finite-time blow up of the heat flow of harmonic maps from surfaces}. J. Diff. Geom. \textbf{36}, 507-515 (1992).

\bibitem{DPW2020}
J. D\'{a}vila, M. del Pino, and J. Wei, \emph{Singularity formation for the two-dimensional harmonic map flow into {$S^2$}}. Invent. Math. 219, no. 2, 345-466 (2020).

\bibitem{DingTian1995}
W. Ding and G. Tian, \emph{Energy identity for a class of approximate harmonic maps from surfaces}. Comm. Anal. Geom. \textbf{3}, 543-554 (1995).

\bibitem{EellsLemaire1995}
J. Eells and L. Lemaire, \emph{Two reports on harmonic maps}. World Scientific (1995).

\bibitem{EellsSampson1964}
J. Eells and J. H. Sampson, \emph{Harmonic mappings of Riemannian manifolds.} Amer. J. Math. 86, 109-169 (1964).

\bibitem{GoldbergKobayashi1967}
S. I. Goldberg, and S. Kobayashi, \emph{Holomorphic bisectional curvature.} J. Differ. Geom. 1.3-4: 225-233 (1967).




\bibitem{LinWang1998}
F. Lin and C. Wang, \emph{Energy identity of harmonic map flows from surfaces at finite singular time}. Calc. Var. \textbf{6}, 369-380 (1998).

\bibitem{LiuYang2010}
Q. Liu and Y. Yang, {\em Rigidity of the harmonic map heat flow from the sphere to compact K\"ahler manifolds}. Ark. Mat. 48, no. 1, 121-130 (2010).

\bibitem{Mok1988}
N. Mok, \emph{The uniformization theorem for compact K\"{a}hler manifolds of nonnegative bisectional curvature}. J Diff Geom, 27: 179-214 (1988).

\bibitem{Mori1979}
S. Mori, \emph{Projective manifolds with ample tangent bundles}. Ann of Math, 110: 593-606 (1979).

\bibitem{Parker1996}
T. Parker, \emph{Bubble tree convergence for harmonic maps}. J. Diff. Geom. \textbf{44}(3), 595-633 (1996).

\bibitem{Parker2003}
T. H. Parker. \emph{What is a Bubble Tree?} Notices of the American Mathematical Society, 50(6), 666-667 (2003).

\bibitem{Qing1995}
J. Qing, \emph{On singularities of the heat flow for harmonic maps from surfaces into spheres}. Commun. Anal. Geom. 3(1-2), 297-315 (1995).

\bibitem{Qing2003}
J. Qing, \emph{A remark on the finite time singularity of the heat flow for harmonic maps}. Calc. Var. 17, 393-403 (2003).

\bibitem{QingTian1997}
J. Qing and G. Tian, \emph{Bubbling of the heat flows for harmonic maps from surfaces}. Commun. Pure Appl. Math. 50(4), 295-310 (1997).


\bibitem{RaphaelSchweyer2013}
P. Rapha\"{e}l and R. Schweyer, {\em Stable blowup dynamics for the 1-corotational energy critical harmonic heat flow}. Comm. Pure Appl. Math. 66, no. 3, 414-480 (2013).

\bibitem{RaphaelSchweyer2014}
P. Rapha\"{e}l and R. Schweyer, {\em Quantized slow blow-up dynamics for the corotational energy-critical harmonic heat flow}. Anal. PDE 7, no. 8, 1713-1805 (2014).

\bibitem{SacksUhlenbeck1981}
J. Sacks and K. Uhlenbeck, \emph{The existence of minimal immersions of 2-spheres}. Ann. of Math. 113 1-24 (1981).

\bibitem{SchoenUhlenbeck1982}
R. Schoen and K. Uhlenbeck. \emph{A regularity theory for harmonic maps.} J. Differ. Geom. 17.2: 307-335 (1982).

\bibitem{SchoenYau1978}
R. Schoen and S. T. Yau, \emph{On univalent harmonic maps between surfaces.} Invent. Math., 44(3), 265-278 (1978).

\bibitem{SchoenYau1997}
R. Schoen and S. T. Yau, {\em Lectures on harmonic maps. Conference Proceedings and Lecture Notes in Geometry and Topology, II. } International Press, Cambridge, MA, (1997).

\bibitem{SiuYau1980}
Y. T. Siu and S. T. Yau, \emph{Compact K\"{a}hler manifolds of positive bisectional curvature}. Invent. Math. 59: 189-204 (1980).

\bibitem{Struwe1985}
M. Struwe, \emph{On the evolution of harmonic mappings of Riemannian surfaces}. Comment. Math. Helv. 60(4), 558-581 (1985).

\bibitem{struwevariationalmethods}
M. Struwe, \emph{Variational Methods, 3rd ed. Applications to Nonlinear Partial Differential Equations and Hamiltonian Systems}. Berlin: Springer-Verlag (1996).

\bibitem{Toledo1979}
D. Toledo, {\em Harmonic maps from surfaces to certain Kaehler manifolds.} Math. Scand. 45, no. 1, 13-26 (1979).

\bibitem{Topping1997}
P. Topping, {\em Rigidity in the harmonic map heat flow.} J. Differential Geom. 45 no. 3, 593-610 (1997).

\bibitem{Topping2004-mathz}
P. Topping, {\em Winding behaviour of finite-time singularities of the harmonic map heat flow.} Math. Z. 247, no. 2, 279-302 (2004).

\bibitem{Topping2004-annmath}
P. Topping, {\em Repulsion and quantization in almost-harmonic maps, and asymptotics of the harmonic map flow}, Ann. of Math. (2) 159 (2004), no. 2, 465-534.

\bibitem{Topping2004-cvpde}
P. Topping, {\em Improved regularity of harmonic map flows with H\"{o}lder continuous energy}. Calc. Var. Partial Differential Equations 21, no. 1, 47-55 (2004).

\bibitem{Waldron2016}
A. Waldron, {\em Instantons and singularities in the Yang-Mills flow.} Calc. Var. Partial Differential Equations 55 (5), 113 (2016).

\bibitem{Waldron2019}
A. Waldron, {\em Long time existence for Yang-Mills flow.} Invent. Math. 217, no. 3, 1069-1147 (2019).

\bibitem{Wang1996}
C. Wang, \emph{Bubble phenomena of certain Palais-Smale sequences from surfaces to general targets.} Houston. J. Math. 22(3), 559-590 (1996).

\bibitem{Wood1979}
J. C. Wood, \emph{Holomorphicity of certain harmonic maps from a surface to complex projective n-space.} Journal of the London Mathematical Society 2.1: 137-142 (1979).



\end{thebibliography}
\end{document}